\theoremstyle{plain}
\newtheorem{te}{Theorem}[section]
\newtheorem{pr}[te]{Proposition}
\newtheorem{llll}[te]{Lemma}
\newtheorem*{ack*}{Acknowledgment}
\theoremstyle{remark}
\newtheorem{rmk}{Remark}
\newcommand{\mbd}{\boldsymbol}
\newcommand{\dsum}{\displaystyle\sum}
\newcommand{\dint}{\displaystyle\int}
\newcommand{\doint}{\displaystyle\oint}
\newcommand{\dprod}{\displaystyle\prod}
\numberwithin{equation}{section}
\def\0{{\bf 0}}
\def\R{{\mathbb R}}
\def\N{{\mathbb N}}
\def\C{{\mathbb C}}
\def\Z{{\mathbb Z}}
\begin{document}

\author{Kiseok Yeon}
\email{kyeon@purdue.edu}
\address{Department of Mathematics, Purdue University, 150 N. University Street, West Lafayette, IN 47907-2067, USA}
\title[Major and minor arcs estimation]{Small fractional parts of polynomials and mean values of exponential sums}
\maketitle

\begin{abstract}
   Let $k_i\ (i=1,2,\ldots,t)$ be natural numbers with $k_1>k_2>\cdots>k_t>0$, $k_1\geq 2$ and $t<k_1.$ 
   Given real numbers $\alpha_{ji}\ (1\leq j\leq t,\ 1\leq i\leq s)$, we consider polynomials of the shape 
    $$\varphi_i(x)=\alpha_{1i}x^{k_1}+\alpha_{2i}x^{k_2}+\cdots+\alpha_{ti}x^{k_t},$$ 
   and derive upper bounds for fractional parts of polynomials in the shape $$\varphi_1(x_1)+\varphi_2(x_2)+\cdots+\varphi_s(x_s),$$
   by applying novel mean value estimates related to Vinogradov's mean value theorem. Our results improve on earlier Theorems of Baker (2017).

\end{abstract}

\section{Introduction}

Since the early part of the last century, estimates of Weyl sums have played crucial roles in many problems in additive number theory. The classical bounds for Weyl sums have stemmed from Weyl's method [$\ref{ref17}$] and Vinogradov's method [$\ref{ref18}$]. 
In particular, 
these bounds have been widely used in studying the distribution of polynomial modulo $1$, initiated by a question posed by Hardy and Littlewood [$\ref{ref19}$] asking, when $\alpha\in \R$, $k\in \N$ and $\epsilon>0$, whether there exists $\sigma>0$ not depening on $\alpha$ such that
\begin{equation*}
    \min_{1\leq x\leq X}\|\alpha x^k\|\leq X^{-\sigma+\epsilon},
\end{equation*}
where $\|\cdot\|$ denotes the distance to the nearest integer and $X$ is sufficiently large in terms of $k$ and $\epsilon.$ By exploiting such bounds for Weyl sums, Heilbronn [$\ref{ref20}$] and Danicic [$\ref{ref21}$] obtained $\sigma=2^{1-k}.$ Subsequently, the exponent $1/2$ in the case $k=2$ was improved to $\sigma=4/7$ by Zaharescu [$\ref{ref29}$]. By exploiting estimates for smooth Weyl sums, Wooley [$\ref{ref11}$] obtained the permissible exponent $\sigma=1/(k\log k+O(k\log\log k)).$ Furthermore, combined with the recent progress on bounds for Weyl sums, stemming from the resolution of the main conjecture in Vinogradov's mean value theorem, Baker [$\ref{ref4}$] shows that $\sigma=1/(k(k-1))$ is permissible, and also derives the explicit exponent $\sigma(s,k)=s/(k(k-1))$ such that 
\begin{equation}\label{1}
    \min_{\substack{0\leq \boldsymbol{x}\leq X\\\boldsymbol{x}\neq \boldsymbol{0}}}\|\alpha_1x^k_1+\cdots+\alpha_s x_s^k\|\leq X^{-\sigma(s,k)+\epsilon},
\end{equation}
for $1\leq s\leq k(k-1)$. Here and throughout, we write $0\leq \boldsymbol{x}\leq X$ and $\boldsymbol{x}\neq \boldsymbol{0}$ to abbreviate the conditions $0\leq x_1,\ldots,x_s\leq X$ and $(x_1,\ldots,x_s)\neq (0,\ldots,0).$

In this paper, we seek to make the bound $(\ref{1})$ sharper via mean values of exponential sum, rather than exploiting bounds for Weyl sums. Furthermore, by applying new mean value estimates for exponential sums related to Vinogradov's mean value theorem, the method described here shall deliver bounds for small fractional parts of polynomial in the generalized shape $\varphi_1(x_1)+\varphi_2(x_2)+\cdots+\varphi_s(x_s),$ where $$\varphi_i(x)=\alpha_{1i}x^{k_1}+\alpha_{2i}x^{k_2}+\cdots+\alpha_{ti}x^{k_t}$$
in which $s,t,k_1,k_2,\ldots,k_t$ are natural numbers with $k_1>t\geq 2$ and $k_1>k_2>\cdots>k_t.$

\bigskip

\begin{te}
Let $\epsilon>0$ and $s,k$ be natural numbers with $k\geq 6.$ Suppose that $X$ is sufficiently large in terms of $s,k$ and $\epsilon.$ Consider $\alpha_i\in \R$ with $1\leq i\leq s.$ Then, whenever $s\geq \frac{k(k+1)}{2}$, one has
\begin{equation}
    \min_{\substack{0\leq \boldsymbol{x}\leq X\\ \boldsymbol{x}\neq \boldsymbol{0}}}\|\alpha_1x_1^k+\alpha_2x_2^k+\cdots+\alpha_sx_s^k\| \leq X^{-1+\epsilon}.
\end{equation}

\end{te}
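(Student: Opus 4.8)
\bigskip

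The strategy is a Davenport--Heilbronn type argument in which the major-arc main term is produced by the trivial frequency and the minor-arc error is controlled by the mean-value estimates. Write $\delta=X^{-1+\epsilon}$, put $f(\gamma)=\sum_{0\le x\le X}e(\gamma x^k)$, and for integer points $\boldsymbol x$ set $\Theta(\boldsymbol x)=\alpha_1x_1^k+\cdots+\alpha_sx_s^k$, so that $\sum_{\boldsymbol 0\le\boldsymbol x\le X}e\bigl(h\,\Theta(\boldsymbol x)\bigr)=\prod_{i=1}^{s}f(h\alpha_i)$ for every $h\in\Z$. Assume, for a contradiction, that $\|\Theta(\boldsymbol x)\|>\delta$ for every integer $\boldsymbol x$ with $\boldsymbol 0\le\boldsymbol x\le X$ and $\boldsymbol x\neq\boldsymbol 0$. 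Let $m_\delta$ be the periodised Beurling--Selberg minorant for $\{\beta:\ \|\beta\|\le\delta\}$ of degree $H=\lceil\delta^{-1}\rceil$; it has mass $\widehat{m_\delta}(0)\gg\delta$, coefficients $|\widehat{m_\delta}(h)|\ll\min(\delta,|h|^{-1})$, and satisfies $m_\delta(\beta)\le\mathbf 1_{\|\beta\|\le\delta}$. Summing $m_\delta\bigl(\Theta(\boldsymbol x)\bigr)$ over $\boldsymbol x$ and using the contradiction hypothesis — so that only $\boldsymbol x=\boldsymbol 0$ contributes on the left — yields
\begin{equation}\label{reduc}
1\ \ge\ \sum_{|h|\le H}\widehat{m_\delta}(h)\prod_{i=1}^{s}f(h\alpha_i)\ \gg\ \delta\,(\lfloor X\rfloor+1)^{s}\ -\ \sum_{1\le h\le H}|\widehat{m_\delta}(h)|\prod_{i=1}^{s}\bigl|f(h\alpha_i)\bigr|.
\end{equation}
Since $\delta(\lfloor X\rfloor+1)^{s}\gg X^{s-1+\epsilon}$, the theorem follows once the error sum in \eqref{reduc} is shown to be $o\bigl(X^{s-1+\epsilon}\bigr)$.

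Before estimating the error sum I would prune the ``major arcs''. Applying Dirichlet's theorem to each $\alpha_i$ with denominator $X^{k-1}$ produces $a_i/q_i$ with $(a_i,q_i)=1$, $q_i\le X^{k-1}$ and $|\alpha_i-a_i/q_i|\le q_i^{-1}X^{-(k-1)}$. If some $q_i$ is small, say $q_i\le X^{2}$, one produces an admissible $\boldsymbol x\neq\boldsymbol 0$ with $\|\Theta(\boldsymbol x)\|\le\delta$ directly: taking the single coordinate $x_i=q_i$ gives $\|\Theta(\boldsymbol x)\|=\|q_i^{k}(\alpha_i-a_i/q_i)\|$, which is $\le\delta$ after sharpening the Dirichlet modulus to $q_i^{k-1}X^{1-\epsilon}$ (legitimate in this branch, where $q_i$ is assumed small); a two-coordinate variant, choosing $x_i,x_j\le X$ with $q\mid a_ix_i^k+a_jx_j^k$, covers the range $X<q\le X^{2}$ and the case of coincident $\alpha_i$. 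Hence we may assume every $\alpha_i$ has no rational approximation of denominator $\le X^{2}$ of the relevant quality, so $q_i\in(X^{2},X^{k-1}]$; since $H\le X^{1-\epsilon}<X^{2}<q_i$, the point $h\alpha_i$ then stays on a genuine minor arc for all $1\le h\le H$.

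On this minor-arc set the error sum is bounded by the mean-value estimates. By Hölder's inequality (and, if an $\alpha_i$ is repeated, by the arithmetic--geometric mean inequality), the error sum in \eqref{reduc} is $\ll\delta\,\max_{\alpha}\sum_{1\le h\le H}|f(h\alpha)|^{s}$, the maximum over $\alpha\in\{\alpha_1,\dots,\alpha_s\}$, with $s=\tfrac12k(k+1)$. Writing $|f(h\alpha)|^{s}\le X^{\,s-2u}|f(h\alpha)|^{2u}$ for $2u$ the largest even integer not exceeding $s$, expanding $|f(h\alpha)|^{2u}$ over representations $x_1^k+\cdots+x_u^k-y_1^k-\cdots-y_u^k=n$, summing the geometric progression in $h$, and separating the diagonal $n=0$ from $n\neq0$ — using that the number of such representations is maximised at $n=0$, where it equals $J_u(X)=\int_0^1|f(\beta)|^{2u}\,d\beta$ — gives
\begin{equation*}
\sum_{1\le h\le H}|f(h\alpha)|^{2u}\ \ll\ H\,J_u(X)\ +\ J_u(X)\!\!\sum_{1\le|n|\le uX^{k}}\!\!\min\!\bigl(H,\|\alpha n\|^{-1}\bigr).
\end{equation*}
Here $J_u(X)\ll X^{\,2u-k+\epsilon}$ by the relevant mean-value estimate — a consequence of Vinogradov's mean value theorem, valid in a range of $u$ that, for $k\ge6$, is already reached at $u=\lceil s/2\rceil$ — while the minor-arc structure $q\in(X^{2},X^{k-1}]$ gives $\sum_{1\le|n|\le uX^{k}}\min(H,\|\alpha n\|^{-1})\ll X^{k+\epsilon}$. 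It is precisely the balancing of $u$ against the degree $k$ here, so that the resulting bound stays below $\delta(\lfloor X\rfloor+1)^{s}$, that forces the exponent $s=\tfrac12k(k+1)$, the critical exponent of Vinogradov's mean value theorem; inserting it into \eqref{reduc} contradicts the assumption once $X$ is large, which proves the theorem.

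The decisive — and hardest — point is this last balancing. The naive estimate just sketched loses a logarithmic (essentially $X^{\epsilon}$) factor against the main term, coming from $\sum_{|n|\le uX^{k}}\|\alpha n\|^{-1}\asymp X^{k}\log X$ together with the $X^{\epsilon}$ in the moment bound, so one cannot merely take absolute values inside \eqref{reduc}. Closing this gap is exactly where the ``novel mean value estimates related to Vinogradov's mean value theorem'' do their work: they replace the pointwise Weyl bound on the minor arcs by an \emph{averaged} bound with a genuine power saving, strong enough to beat the main term with room to spare, and it is through them that the exponent $\tfrac12k(k+1)$ — rather than Baker's $k(k-1)$ — suffices for the bound $X^{-1+\epsilon}$. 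The other place demanding care is the major-arc bookkeeping in the intermediate range $X<q_i\le X^{2}$, where neither a one- nor a two-variable shortcut is entirely routine and one must exploit several of the coordinates $x_i$ simultaneously, in conjunction with the polynomial structure, to locate a small fractional part.
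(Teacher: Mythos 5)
The proposal differs from the paper's route at the outset (you use a Beurling--Selberg minorant and Fourier expansion, the paper uses a discrepancy-type lemma, Lemma 4.2), but both reduce to showing $\sum_{1\le h\le H}\prod_i|f(h\alpha_i)|\ll X^{s-\eta}$; that part is equivalent. The difficulties begin after that reduction, and there are three concrete gaps.

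\textbf{The single-exponent mean-value bound is unavailable.} You bound the error by $\delta\max_\alpha\sum_{h}|f(h\alpha)|^{s}$, then pass to $X^{s-2u}|f|^{2u}$ with $2u$ the largest even integer $\le s=\tfrac12k(k+1)$, and then assert $J_u(X)=\int_0^1|f|^{2u}\ll X^{2u-k+\epsilon}$. At $u\approx k(k+1)/4$ this is the asymptotic-formula bound for Waring's problem in roughly $k^2/2$ variables, which is not a consequence of Vinogradov's mean value theorem and is not known for general $k$ --- the best current ranges require $2u$ of size about $k^2-k+O(\sqrt k)$ or larger. (The Vinogradov mean value theorem bounds the full translation-invariant system $\sum x_i^j=\sum y_i^j$ for $j=1,\dots,k$; the single-$j=k$ mean value $J_u(X)$ has strictly more solutions and is not controlled by it.) This is why the paper does \emph{not} take an $s$-th power Hölder. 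Instead, it applies Hölder to $\prod_i|f(h\alpha_i)|$ with exponent $k+1$ on factors for which $h\alpha_i$ lies on major arcs and $k(k+1)$ on factors where $h\alpha_i$ is on minor arcs, after splitting $[1,H]$ into $2^s$ classes $H_1,\dots,H_{2^s}$ according to the major/minor status of each $h\alpha_j$; the crucial point is that the numbers $k+1$ and $k(k+1)$ are exactly the exponents at which the major-arc estimate $\int_{\mathfrak M_1}|f|^{k+1}\ll X^{1+\epsilon}$ (Vaughan) and the minor-arc estimate $\int_{\mathfrak m}|f|^{k(k+1)}\ll X^{k(k+1)-k-1+\epsilon}$ (Wooley) become available, and their weighted average produces the exponent $s=\tfrac12k(k+1)$. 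Without this simultaneous decomposition and the Gallagher--Sobolev passage from a sum over $h$ to an integral over a thin $\gamma$-window, the argument does not close --- as you acknowledge yourself in the final paragraph, but the loss is polynomial rather than merely logarithmic, and cannot be absorbed by choosing $\epsilon$ carefully.

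\textbf{The pruning step is not sound as written.} You first apply Dirichlet with denominator bound $X^{k-1}$, getting $q_i\le X^{k-1}$ and $|\alpha_i-a_i/q_i|\le q_i^{-1}X^{-(k-1)}$, and then, when $q_i\le X^2$, propose to ``sharpen the Dirichlet modulus to $q_i^{k-1}X^{1-\epsilon}$.'' Dirichlet's theorem does not allow one to refine the quality of a fixed approximant $a_i/q_i$; applying the theorem again with a larger denominator bound produces a possibly different $q$, which may exceed $X$ and thus cannot be used as a coordinate $x_i$. The two-coordinate sketch for $X<q\le X^2$ is also unjustified: finding $x_i,x_j\le X$ with $q\mid a_ix_i^k+a_jx_j^k$ is a nontrivial congruence problem, and even when such a pair exists one must also control the size of $\alpha_ix_i^k+\alpha_jx_j^k$ modulo $1$, which the sketch does not address. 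The paper avoids this entirely: it defines $\mathfrak{M}^H$ directly by the condition $|q\alpha-a|<X^{1-k}H^{-1}$ with $q\le X$, so that a single coordinate $x_j=q$ already delivers $\|\alpha_jq^k\|\le q^{k-1}\|q\alpha_j\|\le H^{-1}$, and then uses a Weyl-type argument to show that $\alpha_j\in\mathfrak m^H$ forces $|f(h\alpha_j)|\ll X^{1-\delta_1}$ for every $h\le H$ --- a statement about every $h$, not only about the approximant. Your scheme never establishes that uniform pointwise saving.

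\textbf{The diagonalisation bound $r_u(n)\le J_u(X)$ is valid but is not enough.} Even with the two issues above repaired, bounding $\sum_{n\ne0}r_u(n)\min(H,\|\alpha n\|^{-1})$ by $J_u(X)\sum_n\min(H,\|\alpha n\|^{-1})$ throws away the fact that most $r_u(n)$ are much smaller than $J_u(X)$. The paper's Proposition 4.4 instead bounds $\sum_{h\in\mathcal D_2}|f(h\alpha)|^{k(k+1)}$ by an integral of $|f(\gamma)|^{k(k+1)}$ over the pruned minor arcs $\mathfrak m_4$ times a divisor-type count $M(H)$ of admissible $h$, and it is the minor-arc integral --- carrying a genuine power saving --- that supplies the needed strength. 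Your sketch recognizes that something like this is required, but it is precisely the content that must be proved and cannot be invoked as a black box.

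In short, the setup via the selection function is a legitimate alternative to Lemma 4.2, but the heart of the argument --- the $2^s$-fold major/minor decomposition of the $h$-range, the Gallagher--Sobolev passage, the choice of exponents $k+1$ and $k(k+1)$, and the resulting interpolation --- is absent, and the mean-value bound you substitute for it is not known at the required exponent.
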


For comparison, the work of Baker [$\ref{ref4}$, Theorem 3] shows ($\ref{1}$) with $\sigma(s,k)=\frac{s}{k(k-1)}$ for $1\leq s\leq k(k-1).$ 
  His work also gives results when $s> k(k-1),$ too complicated to state in full here. It is sufficient to report that the exponent $s/(k(k-1))$ is replaced by an exponent $\sigma$ in Baker [$\ref{ref4}$, Theorem 3], with $\sigma\rightarrow 2$ as $s\rightarrow \infty.$
 Theorem 1.1 improves on this result when $\frac{k(k+1)}{2}\leq s< k(k-1)$. 
 
 We note that with additional effort, for $s\geq k+2$ one may get ($\ref{1}$) with
 \begin{equation}\label{333}
 \sigma(s,k)=\min\biggl\{\frac{s}{k(k+1)-s}, 1\biggr\}.
 \end{equation}
Notice that this improves on a result of Baker [$\ref{ref4}$] described above when $2k<s<k(k-1)$. We record this result in section 4 (see Theorem 4.1 below). We also note that experts may expect that the exponent $(\ref{333})$ can be improved for large $k$ by using estimates for smooth Weyl sums. However, to obtain results for $s>1$ one encounters a number of technical complications that threaten to obstruct useful conclusions. Consequently, we focus in this paper on conclusions made accessible by our new mean value estimates for exponential sums.

\bigskip
 As we explained above, the method described here delivers bounds for small fractional parts of more general polynomials. Thus, in order to describe these polynomials and the following theorems, we require some notation. Consider a fixed $t$-tuple $\mathbf{k}=(k_1,\ldots,k_t)$ of positive integers satisfying 
\begin{equation*}
   k=k_1>k_2>\cdots>k_t\geq 1.
\end{equation*}
We denote $\{1,2,\ldots,k_1\}\setminus\{k_1,\ldots,k_t\}$ by $\{i_1,\ldots,i_{k-t}\}$ with $i_1>\cdots>i_{k-t}.$ Furthermore, we write $\sigma=\sigma(\mathbf{k})$ for
\begin{align}\label{eq1.41.4}
\sigma= \max_{1\leq l\leq k-t} \frac{l}{(k-i_l)(k-i_l+1)}.
\end{align}

\bigskip

\begin{te}
Let $\epsilon>0$. Suppose that $s,t, k_1,\ldots,k_t$ are natural numbers satisfying $k_1\geq 6$, $k_1>t\geq 2$ and $k_1>k_2>\cdots>k_t.$ Suppose that $X$ is sufficiently large in terms of $s,k_1$ and $\epsilon.$ Consider $\alpha_{ji}\in \R$ with $1\leq i\leq s$ and $1\leq j\leq t$. Define $\varphi_i(x)=\alpha_{1i}x^{k_1}+\cdots+\alpha_{ti} x^{k_t}$ with $1\leq i\leq s$. Then, whenever $s>k_1^2+k_1+2\lceil\sigma(1-k_1)\rceil$, one has

\begin{equation}\label{4}
    \min_{\substack{0\leq \boldsymbol{x}\leq X\\\boldsymbol{x}\neq \boldsymbol{0}}}\|\varphi_1(x_1)+\varphi_2(x_2)+\cdots +\varphi_{s}(x_{s})\|\leq X^{-1+\epsilon}.
\end{equation}
\end{te}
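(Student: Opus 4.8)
The plan is to follow the classical Davenport--Heilbronn circle method strategy for small fractional parts, but with the crucial input being the new mean value estimates for exponential sums advertised in the introduction (rather than pointwise Weyl bounds, as in Baker's approach). Write $f_i(\alpha) = \sum_{0 \le x \le X} e(\varphi_i(x)\beta)$ for a dummy real parameter, and more pertinently consider, for a fixed real $\theta$ and a parameter $\delta = X^{-1+\epsilon}$, the exponential sum with a linear phase detecting the inequality $\|\varphi_1(x_1) + \cdots + \varphi_s(x_s)\| \le \delta$. The standard device is to introduce a kernel $K(\beta)$ (a Fej\'er-type kernel, or a smooth majorant) with $\widehat{K}$ supported near $0$, so that $\sum_{\n} K(\varphi_1(x_1)+\cdots+\varphi_s(x_s))$ is bounded below by the number of solutions with small fractional part, and expand $K$ into its Fourier series. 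This reduces matters to estimating $\int_{\T} \prod_{i=1}^s F_i(\beta) \Psi(\beta)\, d\beta$ where $F_i(\beta) = \sum_{0\le x_i\le X} e(\beta \varphi_i(x_i))$ and $\Psi$ encodes the kernel; the goal is to show this integral exceeds the "diagonal" contribution coming from $\x = \0$, forcing a nontrivial solution.

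The key steps, in order, would be: \textbf{(1)} Set up the kernel/Fourier expansion and reduce to bounding a mean value $\int_0^1 |F_1(\beta)\cdots F_s(\beta)|\, d\beta$, or more precisely a weighted version where the weight concentrates $\beta$ near rationals with small denominator. \textbf{(2)} Perform the major/minor arc dissection. On the minor arcs, use the new mean value estimates — here is where the hypothesis $s > k_1^2 + k_1 + 2\lceil \sigma(1-k_1)\rceil$ must enter: the auxiliary system governing $\varphi_1(x_1)+\cdots+\varphi_s(x_s)$ is a ``mixed degree'' Vinogradov-type system involving exponents $k_1, \ldots, k_t$, and one needs enough variables $s$ so that the relevant mean value beats the trivial bound $X^s$ by a power saving; the quantity $\sigma = \sigma(\mathbf{k})$ in $(\ref{eq1.41.4})$ precisely measures the deficiency caused by the ``missing'' exponents $i_1, \ldots, i_{k-t}$, each costing roughly $2\lceil \ell/((k-i_\ell)(k-i_\ell+1))\rceil$ extra variables, summed/maximized appropriately. \textbf{(3)} On the major arcs, carry out the usual singular series / singular integral analysis to extract the expected main term of order $X^{s-k_1}$ (or whatever the generalized degree-sum predicts), and show the singular series is bounded below — or, since we only need \emph{existence}, show the major-arc contribution dominates the diagonal term $O(1)$ plus the minor-arc error. \textbf{(4)} Combine: the lower bound from (3) exceeds the contribution of $\x=\0$, so a nonzero solution exists, and unpacking the kernel support gives the bound $X^{-1+\epsilon}$.

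A few technical points deserve care. One must handle the inhomogeneity in the exponents: since $k_t$ may be as small as $1$, the polynomials $\varphi_i$ can have a linear term, and the variable $\beta$ must be restricted so that $\beta \alpha_{ti}$-type terms are controlled; a Dirichlet-type approximation to one of the coefficients, followed by a case split according to whether the approximating denominator is small or large, is the usual remedy, and this is where ``$X$ sufficiently large in terms of $s, k_1, \epsilon$'' gets used. Also, the mean value estimates presumably come with $X^\epsilon$ losses, so one tracks $\epsilon$'s carefully and absorbs them at the end; the choice $\delta = X^{-1+\epsilon}$ rather than $X^{-1}$ is exactly to give room for these losses and for the kernel's imperfect localization.

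The hardest part, and the genuine novelty, is step (2): establishing that the mixed-degree mean value $\int_0^1 |F_1\cdots F_s|\, d\beta$ (with the weight supported near rationals of small height) satisfies a bound of the form $X^{s - k_1 - \eta}$ with $\eta > 0$ precisely when $s > k_1^2 + k_1 + 2\lceil\sigma(1 - k_1)\rceil$. This requires the new Vinogradov-type mean value theorem for systems with the exponent set $\{k_1, \ldots, k_t\}$ — equivalently, understanding how many variables are needed to recover ``square-root cancellation on average'' when only $t$ of the first $k_1$ power sums are present. The counting of the threshold $k_1^2 + k_1 + 2\lceil\sigma(1-k_1)\rceil$ is the combinatorial heart: the term $k_1^2 + k_1 = k_1(k_1+1)$ is the threshold one would need if all exponents $1, \ldots, k_1$ were present (cf. Theorem 1.1), and $2\lceil\sigma(1-k_1)\rceil = -2\lceil\sigma(k_1-1)\rceil$ is a \emph{saving} (note $\sigma > 0$ so this term is negative), reflecting that fewer exponents means a smaller critical interval in the efficient congruencing / decoupling iteration — the maximization in $(\ref{eq1.41.4})$ over $\ell$ identifies the bottleneck among the missing exponents. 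I would isolate this mean value bound as a separate lemma (presumably proved earlier in the paper via efficient congruencing or $\ell^2$-decoupling adapted to the incomplete exponent set) and treat the circle-method assembly in steps (1),(3),(4) as comparatively routine.
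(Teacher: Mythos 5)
Your general orientation (reduce to an exponential sum over a frequency variable $h$ using a Fej\'er-type kernel, then exploit the new mean value estimates together with a major/minor dichotomy tied to the hypothesis on $s$) points in the right direction, and step (1) is essentially the contrapositive form of the classical criterion the paper invokes as Lemma 4.2. But from step (2) onward the proposal drifts into a framework that does not fit this problem and omits the steps that actually carry the argument.

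The central misconception is in step (3). There is no singular series or singular integral here, and no main term to extract. The problem is not a counting problem with an asymptotic; it is an upper-bound problem. The paper's argument is by contradiction: assume the minimum exceeds $H^{-1}$ with $H=X^{1-\nu}$, deduce from Lemma 4.2 that $\sum_{1\le h\le H}\bigl|\prod_j S_j(h)\bigr|\gg X^s$ where $S_j(h)=\sum_{x\le X}e(h\varphi_j(x))$, and then show that this sum over $h$ is in fact $O(X^{s-\eta})$ on \emph{both} the ``major'' and ``minor'' parts of the $h$-range. Neither piece produces a positive main term; both must be beaten. Phrasing the argument as ``major-arc contribution dominates the diagonal plus the minor-arc error'' therefore misdescribes what has to be proved.

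You also miss the mechanism by which the mean value estimates (Theorem 1.4) are brought to bear on the $h$-sum. The paper does not integrate $F_i(\beta)$ over $\beta\in\mathbb{T}$; rather it (i) shows under the contradiction hypothesis that every tuple $(\alpha_{1j},\ldots,\alpha_{tj})$ lies in $\widetilde{\mathfrak{m}}_H$, giving the pointwise saving $|S_j(h)|\ll X^{1-\delta_1}$ and, via a covering argument, the key fact $N(H,\alpha_{1j},\ldots,\alpha_{tj})\le 1$; (ii) splits $\{1,\ldots,H\}$ into $2^s$ pieces $H_i$ according to whether each $h\alpha_{1j}$ falls in $\mathfrak{M}$ or $\mathfrak{m}$; (iii) converts the resulting $h$-sums of $|S_j(h)|^{2L}$ and $|S_j(h)|^{k_1(k_1+1)}$ into integrals over $\mathfrak{M}_1$ and $\mathfrak{m}_4$ respectively using the Gallagher--Sobolev inequality (Lemma 4.3), so that Theorem 1.4 applies directly (Proposition 5.1); and then (iv) combines the pieces with H\"older's inequality, separating out the $m$ indices whose $h\alpha_{1j}$ fall in $\mathfrak{M}$ from the rest. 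The exponent $2L=k_1^2+k_1+2\lceil\sigma(1-k_1)\rceil$ is exactly the threshold $2s\ge k^2+(1-2\sigma)k+2\sigma$ in Theorem 1.4(i) for the major-arc moment, not a ``critical interval'' quantity from decoupling; your combinatorial explanation of the threshold is off, and the claim that $k_1(k_1+1)$ corresponds to Theorem 1.1's threshold is off by a factor of two (Theorem 1.1 needs $s\ge k(k+1)/2$). Without the Gallagher--Sobolev step, the $2^s$-decomposition, and the pointwise saving on $\widetilde{\mathfrak{m}}_H$, the proposal cannot be completed along the lines sketched.
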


The reader will observe that the condition on $s$ in the conclusion of Theorem 1.2 is almost twice as restrictive as that in Theorem 1.1. The explanation for this reduction in strength lies with the generality of the polynomials $\varphi_i$, and correspondingly weaker estimate available for associated exponential sums.

\bigskip

To describe the following theorems regarding new mean values of exponential sums, we introduce some notation.
Define the exponential sum
$F(\alpha_{k_1},\mbd{\alpha}^{t-1})=F_{\mathbf{k}}(\alpha_{k_1},\ldots,\alpha_{k_t};X)$ by 
$$F(\alpha_{k_1},\mbd{\alpha}^{t-1})=\dsum_{1\leq x \leq X}e(\alpha_{k_1}x^{k_1}+\alpha_{k_2}x^{k_2}+\cdots+\alpha_{k_t}x^{k_t}).$$
Denote $d\alpha_{k_t}d\alpha_{k_{t-1}}\cdots d\alpha_{k_2}$ by $d\mbd{\alpha}^{t-1}$, and write
$$\doint  |F(\alpha_{k_1},\mbd{\alpha}^{t-1})|^{2s}d\mbd{\alpha}^{t-1}=\dint_{[0,1)^{t-1}}  |F(\alpha_{k_1},\mbd{\alpha}^{t-1})|^{2s}d\alpha_{k_t}d\alpha_{k_{t-1}}\cdots d\alpha_{k_2}.$$ Furthermore, we write
$$f(\alpha_{k_1},\boldsymbol{\alpha})=\dsum_{1\leq x\leq X}e(\alpha_{k_1}x^{k_1}+\alpha_{k_1-1}x^{k_1-1}+\cdots+\alpha_1 x)$$
and
$$ \doint|f(\alpha_{k_1},\boldsymbol{\alpha})|^{2s}d\boldsymbol{\alpha}=\dint_{[0,1)^{k_1-1}}|f(\alpha_{k_1},\boldsymbol{\alpha})|^{2s} d\boldsymbol{\alpha}.$$

\begin{te}
 Let $s,t$ and $k$ be natural numbers with $t<k$. Let $l$ be an integer with $1\leq l\leq k-t$. Consider a rational approximation to $\alpha_{k}$ satisfying $|\alpha_{k}-a/q|\leq 1/q^2$ with $(q,a)=1$. Then, for  $\epsilon>0$, one has
\begin{equation*}
\doint\left|F(\alpha_k,\mbd{\alpha}^{t-1})\right|^{2s}d\mbd{\alpha}^{t-1}\ll R_lX^{i_1+\cdots+i_{k-t}+\epsilon}\doint\left|f(\alpha_k,\mbd{\alpha})\right|^{2s}d\mbd{\alpha},    
\end{equation*}
where $$R_l=\dprod_{j=1}^{l}\left(X^{-i_j}+X^{-k+i_j}+q^{-1}+qX^{-k}\right)^{\frac{1}{(k-i_l)(k-i_l+1)}}.$$
\end{te}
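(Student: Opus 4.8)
The plan is to relate the incomplete polynomial exponential sum $F(\alpha_k,\mbd{\alpha}^{t-1})$, which carries only the monomials of degree $k_1,\ldots,k_t$, to the full Weyl-type sum $f(\alpha_k,\mbd{\alpha})$, which carries all monomials of degrees $1$ through $k_1$. The mechanism is a differencing argument on the \emph{missing} degrees. Writing $\{i_1,\ldots,i_{k-t}\}$ for the exponents in $\{1,\ldots,k\}$ absent from $\mathbf{k}$, I would iteratively take (Weyl) differences in the $l$ largest missing exponents $i_1>\cdots>i_l$. Each such differencing step either produces a genuine saving from the Weyl sum over the corresponding new variable (governed by the rational approximation $|\alpha_k-a/q|\le 1/q^2$ through a standard divisor-bound / Weyl-inequality estimate), or it introduces an auxiliary integration variable against which one can appeal to the full mean value; tracking both alternatives is exactly what produces the four-term expression $X^{-i_j}+X^{-k+i_j}+q^{-1}+qX^{-k}$ in $R_l$. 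The exponent $\tfrac{1}{(k-i_l)(k-i_l+1)}$ on each factor reflects that, after clearing the top $l$ missing degrees, the residual sum behaves like a Weyl sum of degree $k-i_l$, so one pays a power $\tfrac{1}{K(K+1)}$ with $K=k-i_l$ in the usual minor-arc shape.

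Concretely, the steps I would carry out are: (i) Fix the rational approximation $a/q$ to $\alpha_k$. Perform $l$ successive applications of the Weyl differencing / Hua-type inequality, one for each of the exponents $i_1,\ldots,i_l$, each time passing from a sum missing degree $i_j$ to a product of a shorter sum and a sum over a new variable of length $O(X)$; Hölder's inequality at each stage keeps the $2s$-th moment structure intact. (ii) After $l$ steps, bound the contribution of the new variables by the elementary estimate $\sum_{1\le y\le X} e(\beta y^m)\ll X(X^{-m}+X^{-1}\|\text{something}\|^{-1})$-type inequality together with the rational approximation to $\alpha_k$, giving each factor $\bigl(X^{-i_j}+X^{-k+i_j}+q^{-1}+qX^{-k}\bigr)$ raised to the advertised power; the $X^{i_1+\cdots+i_{k-t}+\epsilon}$ factor collects the trivial ranges of the remaining $k-t-l$ auxiliary/reconstruction variables together with the $X^\epsilon$ from divisor bounds. (iii) Complete the residual sum back up to the full monomial $f(\alpha_k,\mbd{\alpha})$: the sum obtained after differencing has the same top coefficient $\alpha_k$ but fewer monomials, and by orthogonality / completing the remaining intermediate coefficients one dominates it by $\doint |f(\alpha_k,\mbd{\alpha})|^{2s}\,d\mbd{\alpha}$, at the cost of the factor $X^{i_1+\cdots+i_{k-t}+\epsilon}$ already recorded. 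Assembling (i)--(iii) yields the stated bound.

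I expect the main obstacle to be step (i)--(ii): organizing the $l$-fold differencing so that \emph{each} of the four terms in $R_l$ genuinely arises, with the uniform exponent $\tfrac{1}{(k-i_l)(k-i_l+1)}$ rather than a variable exponent depending on $i_j$. The natural differencing in exponent $i_j$ wants to produce a power $\tfrac{1}{(k-i_j)(k-i_j+1)}$, so one must argue that it suffices to use the \emph{weakest} of these (namely $j=l$, since $i_l$ is the smallest of $i_1,\ldots,i_l$ and $(k-i)(k-i+1)$ is decreasing in $i$) — i.e.\ one deliberately under-exploits the savings at the higher exponents $i_1,\ldots,i_{l-1}$ to get a clean product. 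Making that monotonicity comparison rigorous, and checking that the Hölder bookkeeping does not degrade the $2s$-th power, is where the care is needed; the rest is routine Weyl differencing and completion of sums.
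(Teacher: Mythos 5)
Your central mechanism — iterated Weyl differencing keyed to the missing exponents $i_1,\ldots,i_l$ — is not what the paper does, and I don't see how to make it precise. The sum $F(\alpha_k,\mbd{\alpha}^{t-1})$ simply lacks terms of degrees $i_1,\ldots,i_l$; there is no natural operation of ``differencing in the exponent $i_j$,'' and standard Weyl/Hua differencing only reduces the top degree, without interacting with which intermediate degrees are absent. Moreover, Theorem 1.3 is a comparison of two \emph{mean values} (an integral over $t-1$ coefficients versus one over $k-1$ coefficients), which is not the natural habitat of pointwise differencing.

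The paper's argument runs in almost the opposite order to yours and hinges on a different idea. First (Lemma~2.1) it \emph{completes} $F$ to $f$ by inserting the missing coefficients via orthogonality, expressing $\doint|F|^{2s}d\mbd{\alpha}^{t-1}$ as $X^{i_{l+1}+\cdots+i_{k-t}}$ times
$$\mathcal{I}(\alpha_k)=\dsum_{|g_{i_1}|\leq sX^{i_1}}\cdots\dsum_{|g_{i_l}|\leq sX^{i_l}}\doint|f(\alpha_k,\mbd{\alpha})|^{2s}\,e(-\mbd{\alpha}^{(l)}\cdot\mbd{g})\,d\mbd{\alpha},$$
so the full mean value of $f$ is already in play, with a twist tracked by integer $l$-tuples $\mbd{g}$. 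Second (Lemma~2.2), a Bombieri-style shift $x\mapsto x-y$ packages the twist into an auxiliary exponential sum $\Xi(\alpha_k,\mbd{\alpha})$ whose phase is a polynomial of degree exactly $k-i_l$ in the shift variable $y$, with coefficients $\delta_m$ that are linear forms in $h_{i_1},\ldots,h_{i_l}$ with $\alpha_k$ entering $\delta_{k-i_j}$. Third (Lemma~2.3), one applies H\"older's inequality \emph{once} with exponent $2p=(k-i_l)(k-i_l+1)$ to the full sum over $(h_{i_1},\ldots,h_{i_l})$, uses [Bombieri, Lemma~1] to average $S^*(\mbd{\delta};X)^{2p}$ over a small box, counts the admissible $h$-tuples in the box via [Bombieri, Lemma~3] (this is exactly where the four-term factor $q^{-1}+X^{-i_j}+X^{-k+i_j}+qX^{-k}$ arises for each $j$), and finishes with the Carleson--Hunt theorem plus Vinogradov's mean value theorem for the degree-$(k-i_l)$ polynomial in $y$.

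This also shows your anticipated ``weakest-link'' issue is not present: the uniform exponent $\frac{1}{(k-i_l)(k-i_l+1)}$ is not obtained by comparing separate savings $\frac{1}{(k-i_j)(k-i_j+1)}$ and discarding the stronger ones. It falls out of a single H\"older application whose exponent is dictated by the degree $k-i_l$ of the $y$-polynomial (hence by the critical number of variables in Vinogradov's theorem for that degree), and the per-$j$ factorization of $R_l$ comes from the lattice-point count, not from independent differencing steps. Your step (iii), completing back to the full $f$, is in the right spirit but belongs at the very beginning of the argument, not the end.
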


As a consequence of Theorem 1.3, one finds that the mean value over all coefficients but the leading coefficient has an upper bound in terms of the denominator of the rational approximation to $\alpha_k$. From this, we obtain mean value estimates by integrating over $\alpha_k$ lying over major arcs and minor arcs, respectively.

In order to describe these estimates, which we record in Theorem 1.4, and for the argument used throughout this paper, we must introduce sets of major arcs and minor arcs.
Define the major arcs $\mathfrak{M}_l$ with $l>0$ by
\begin{equation}\label{eq1.6}
    \mathfrak{M}_l=\bigcup_{\substack{0\leq a\leq q \leq X\\(q,a)=1}}\mathfrak{M}_l(q,a),
\end{equation}
where $\mathfrak{M}_l(q,a)=\{\alpha\in [0,1)|\ |q\alpha-a|\leq (lk)^{-1}X^{-k+1}\}$. Define the minor arcs to be $\mathfrak{m}_l=[0,1)\setminus \mathfrak{M}.$ We abbreviate $\mathfrak{M}_2$ simply to $\mathfrak{M}.$ Throughout this paper, we use $\mathfrak{M}$ and $\mathfrak{m}$ without further comments, unless specified otherwise. Furthermore, we recall the definition $(\ref{eq1.41.4})$ of the exponent $\sigma,$ 
and write $D$ for 
\begin{equation}\label{1.7}
D=k_1+k_2+\cdots+k_t.    
\end{equation}

\begin{te}\label{thm1.2}
One has the following:

$(\romannumeral1)$
When $s$ is a natural number with $2s\geq k^2+(1-2\sigma)k+2\sigma$, one has
\begin{equation}\label{7777}
    \dint_{\mathfrak{M}}\doint\left|F(\alpha_{k},\mbd{\alpha}^{t-1})\right|^{2s}d\mbd{\alpha}^{t-1}d\alpha_k\ll X^{2s-D+\epsilon}.
\end{equation}

$(ii)$
When $s$ is a natural number with $2s\geq k(k+1),$ one has
\begin{equation}\label{8888}
    \dint_{\mathfrak{m}}\doint\left|F(\alpha_{k},\mbd{\alpha}^{t-1})\right|^{2s}d\mbd{\alpha}^{t-1}d\alpha_k\ll X^{2s-D-\sigma+\epsilon}.
\end{equation}

\end{te}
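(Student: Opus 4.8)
The plan is to deduce both parts from Theorem 1.3, together with the (now resolved) main conjecture in Vinogradov's mean value theorem: writing $J_{s,k}(X)=\int_{[0,1)}\oint|f(\alpha_k,\boldsymbol{\alpha})|^{2s}d\boldsymbol{\alpha}\,d\alpha_k$ for the associated complete integral of degree $k$, one has $J_{s,k}(X)\ll X^{2s-\frac{k(k+1)}{2}+\epsilon}$ for $2s\ge k(k+1)$ and $J_{s,k}(X)\ll X^{s+\epsilon}$ otherwise, and likewise in degree $k-1$. Two elementary facts are used repeatedly: the identity $i_1+\cdots+i_{k-t}=\frac{k(k+1)}{2}-D$, with $D$ as in $(\ref{1.7})$, which turns the power $X^{i_1+\cdots+i_{k-t}}$ occurring in Theorem 1.3 into $X^{\frac{k(k+1)}{2}-D}$; and the bound $\sigma\le\frac12$, which follows from $i_l\le k-l$ in $(\ref{eq1.41.4})$. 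Throughout I fix $l=l^\ast$ attaining the maximum in $(\ref{eq1.41.4})$, so that $\sigma=\frac{l^\ast}{(k-i_{l^\ast})(k-i_{l^\ast}+1)}$.

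For part $(ii)$ I would fix $\alpha_k\in\mathfrak{m}$ and apply Dirichlet's theorem with $Q=2kX^{k-1}$ to obtain a reduced fraction $a/q$ with $q\le 2kX^{k-1}$ and $|\alpha_k-a/q|\le(2kq)^{-1}X^{-k+1}\le q^{-2}$ — an admissible approximation for Theorem 1.3 — noting that necessarily $q>X$, since otherwise $\alpha_k\in\mathfrak{M}_2(q,a)\subseteq\mathfrak{M}$. Because $1\le i_j\le i_1\le k-1$, $q>X$ and $q\le 2kX^{k-1}$, each of $X^{-i_j}$, $X^{-k+i_j}$, $q^{-1}$, $qX^{-k}$ is $\ll X^{-1}$, hence $R_{l^\ast}\ll X^{-\sigma}$ uniformly on $\mathfrak{m}$. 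Inserting this into Theorem 1.3, pulling $R_{l^\ast}$ out of the integral, and then enlarging the range of $\alpha_k$ from $\mathfrak{m}$ to $[0,1)$ gives $\int_{\mathfrak{m}}\oint|F(\alpha_k,\boldsymbol{\alpha}^{t-1})|^{2s}d\boldsymbol{\alpha}^{t-1}d\alpha_k\ll X^{\frac{k(k+1)}{2}-D-\sigma+\epsilon}J_{s,k}(X)$, and $(\ref{8888})$ follows at once from $2s\ge k(k+1)$.

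For part $(i)$ the strategy is the same, but Theorem 1.3 is now applied on each $\mathfrak{M}_2(q,a)$ with $0\le a\le q\le X$, $(a,q)=1$, where $a/q$ is admissible and $R_{l^\ast}\ll q^{-\sigma}$ because $q\le X$ renders every term of $R_l$ besides $q^{-1}$ negligible. Setting $\mathfrak{M}_q=\bigcup_{(a,q)=1}\mathfrak{M}_2(q,a)$, this reduces the claim to
$$\sum_{q\le X}q^{-\sigma}\int_{\mathfrak{M}_q}\oint|f(\alpha_k,\boldsymbol{\alpha})|^{2s}d\boldsymbol{\alpha}\,d\alpha_k\ll X^{2s-\frac{k(k+1)}{2}+\epsilon}.$$
To evaluate the inner integral I would open $\oint|f(\alpha_k,\boldsymbol{\alpha})|^{2s}d\boldsymbol{\alpha}=\sum_{|h|\le sX^k}N(h)e(\alpha_k h)$, where $N(h)$ counts solutions of the degree-$(k-1)$ Vinogradov system with $\sum_i(x_i^k-y_i^k)=h$; the sum over $a$ then produces Ramanujan sums $c_q(h)$, so that $\int_{\mathfrak{M}_q}\oint|f|^{2s}d\boldsymbol{\alpha}\,d\alpha_k=\sum_{|h|\le sX^k}N(h)\,c_q(h)\int_{|\beta|\le(2kq)^{-1}X^{-k+1}}e(\beta h)\,d\beta$. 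The term $h=0$ contributes $k^{-1}J_{s,k}(X)X^{-k+1}\sum_{q\le X}q^{-\sigma-1}\phi(q)\ll J_{s,k}(X)X^{2-k-\sigma+\epsilon}$, which is of lower order and admissible under the hypothesis on $s$; for $h\ne 0$ one uses $|c_q(h)|\le\gcd(h,q)$, the oscillatory bound $\min\{\,(kq)^{-1}X^{-k+1},\,|h|^{-1}\}$ for the $\beta$-integral, the shifted Vinogradov mean value bound $N(h)\ll X^\epsilon J_{s,k}(X)$ together with an estimate for $\sum_{0<|h|\le H}N(h)$ in terms of $J_{s,k-1}(X)$, and the main conjecture for $J_{s,k}(X)$ and $J_{s,k-1}(X)$ (observing that $2s\ge k^2+(1-2\sigma)k+2\sigma$ forces $2s\ge k(k-1)$, so the sharp bound for $J_{s,k-1}(X)$ is in force). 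Carrying out the divisor sums over $d\mid q$ and then summing over $q$ against $q^{-\sigma}$ is what yields exactly the exponent $2s-\frac{k(k+1)}{2}+\epsilon$, under precisely the displayed hypothesis $s\ge\frac{k(k+1)}{2}-\sigma(k-1)$.

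I expect this last step to be the main obstacle. A crude bound using only $\oint|f(\alpha_k,\boldsymbol{\alpha})|^{2s}d\boldsymbol{\alpha}\le J_{s,k-1}(X)$ and $\mathrm{meas}(\mathfrak{M}_q)$ overshoots the target by a genuine power of $X$, because near every rational the diagonal part of $\oint|f|^{2s}d\boldsymbol{\alpha}$ already contributes $\gg X^s$; one therefore has to isolate the $h=0$ peak, control $N(h)$ and $\sum_{0<|h|\le H}N(h)$ away from it, and balance the resulting pieces against the $q^{-\sigma}$ gain furnished by Theorem 1.3. It is this balancing — and it is why the sets $\mathfrak{M}_l$ in $(\ref{eq1.6})$ carry $l$-dependent widths — that pins down the constant $k^2+(1-2\sigma)k+2\sigma$ in $(\ref{7777})$, and it is the technical heart of the argument.
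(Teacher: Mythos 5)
Your treatment of part $(ii)$ is correct and coincides with the paper's: Dirichlet on the minor arc forces $X<q\le 2kX^{k-1}$, every factor of $R_{l^\ast}$ is $O(X^{-1})$, so $R_{l^\ast}\ll X^{-\sigma}$ uniformly, and $(\ref{8888})$ follows from Theorem~1.3, the identity $i_1+\cdots+i_{k-t}=\tfrac{k(k+1)}{2}-D$, and Vinogradov's mean value theorem.

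For part $(i)$ you take a genuinely different route from the paper, and there is a real gap. The paper does not open $\oint|f(\alpha_k,\mbd{\alpha})|^{2s}d\mbd{\alpha}$ as a Fourier series in $\alpha_k$ and does not touch Ramanujan sums. Instead it converts the $q^{-\sigma}$-type saving from Theorem~1.3 into a well-defined weight $\Psi(\alpha_k)=(q+X^k|q\alpha_k-a|)^{-\sigma}$ on $\mathfrak{M}(q,a)$ via the transference principle, then applies H\"older to write
\begin{equation*}
\dint_{\mathfrak{M}}\doint|f|^{2s}\Psi\,d\mbd{\alpha}\,d\alpha_k
\ \ll\ \biggl(\dint_{\mathfrak{M}}\doint|f|^{2s_0}\Psi^{1/\sigma}\,d\mbd{\alpha}\,d\alpha_k\biggr)^{\sigma}
\biggl(\dint_{\mathfrak{M}}\doint|f|^{k(k+1)}\,d\mbd{\alpha}\,d\alpha_k\biggr)^{1-\sigma},
\end{equation*}
with $s_0=(2s-k(k+1)(1-\sigma))/(2\sigma)\in[k(k-1)/2,k(k+1)/2]$. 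The weight in the first factor becomes $\Psi^{1/\sigma}=(q+X^k|q\alpha_k-a|)^{-1}$, to which a pruning lemma (the paper invokes [$\ref{ref6}$, Lemma 2], interpolating between even $2s_0$) applies directly, giving $X^{\epsilon-k}(XI_1+I_2)$ with $I_1,I_2$ the full and truncated Vinogradov moments; balancing these two pieces is what produces the exact threshold $2s\ge k^2+(1-2\sigma)k+2\sigma$. This is short, self-contained modulo known results, and entirely bypasses the $h$-expansion.

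Your alternative — expanding $\oint|f|^{2s}d\mbd{\alpha}=\sum_{h}N(h)e(\alpha_k h)$, summing over $a$ to produce $c_q(h)$, splitting $h=0$ from $h\ne 0$, and summing $d\mid q$ against $q^{-\sigma}$ — is an attempt to rebuild such a pruning estimate from scratch. You correctly dispose of the $h=0$ diagonal, but the $h\ne 0$ sum, which you yourself call ``the technical heart,'' is left as a list of ingredients rather than an argument. In particular it is not shown that the combination of $|c_q(h)|\le(h,q)$, $\min\{(kq)^{-1}X^{-k+1},|h|^{-1}\}$, $N(h)\le N(0)$, and a bound for $\sum_{0<|h|\le H}N(h)$ actually closes at the exponent $2s-\tfrac{k(k+1)}{2}$ under the stated hypothesis on $s$ — and this is precisely the point where such hand-rolled prunings tend to lose a factor. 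Until that step is carried out, part $(i)$ is not proved. Given that the balancing constant in the statement is exactly what falls out of H\"older plus [$\ref{ref6}$, Lemma 2], I would recommend adopting the paper's route rather than pushing the Ramanujan-sum computation further.
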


Wooley [$\ref{ref12}$, Theorem 1.3] provided the mean value estimates of exponential sums over minor arcs, which is $(\ref{8888})$ with $F(\alpha_k,\boldsymbol{\alpha})=\sum_{1\leq x\leq X} e(\alpha_k x^k).$ This mean value estimate delivered improvements in the number of variables required to establish the asymptotic formula in Waring's problem, the density of integral solutions of diagonal Diophantine equations and slim exceptional sets for the asymptotic formula in Waring's problem. Wooley [$\ref{ref13}$, Theorem 1.1] established an essentially optimal estimate for ninth moment of exponential sum having argument $\alpha x^3+\beta x$ (see also [$\ref{ref28}$, Theorem 1.3]), by introducing ($\ref{8888}$) with $F(\alpha_3,\boldsymbol{\alpha})=\sum_{1\leq x\leq X} e(\alpha_3x^3+\alpha_1x).$ Furthermore, Wooley [$\ref{ref15}$, Theorem 14.4] recorded bounds for $(\ref{7777})$ and $(\ref{8888})$ with $k_2<k_1-1.$ In Theorem 1.4, we provide mean values of $F(\alpha_{k},\mbd{\alpha}^{t-1})=\sum_{1\leq x \leq X}e(\alpha_{k}x^{k_1}+\alpha_{k_2}x^{k_2}+\cdots+\alpha_{k_t}x^{k_t})$ with no restrictions on the exponents $k_1,\ldots,k_t.$ Combining with Theorem 1.4, the method described in the proof of Theorem 1.1 shall deliver the proof of Theorem 1.2. 

We also note that by applying H\"older's inequality and the trivial bound $\left|F(\alpha_{k},\mbd{\alpha}^{t-1})\right|\leq X$,
 it follows from Theorem 1.4 $(\romannumeral2)$ 
 that there exists $s_0$ with $s_0<k(k+1)/2$ such that whenever $s\geq s_0$ we have $$\dint_{\mathfrak{m}}\doint\left|F(\alpha_{k},\mbd{\alpha}^{t-1})\right|^{2s}d\mbd{\alpha}^{t-1}d\alpha_k\ll X^{2s-D+\epsilon}.$$
Therefore, we find that there exists $s_0$ with $s_0<\frac{k(k+1)}{2}$ such that whenever $s\geq s_0$ one has
\begin{equation*}
\begin{aligned}
    &\dint\doint\left|F(\alpha_{k},\mbd{\alpha}^{t-1})\right|^{2s}d\mbd{\alpha}^{t-1}d\alpha_k\\
    &=\dint_{\mathfrak{M}}\doint\left|F(\alpha_{k},\mbd{\alpha}^{t-1})\right|^{2s}d\mbd{\alpha}^{t-1}d\alpha_k+\dint_{\mathfrak{m}}\doint\left|F(\alpha_{k},\mbd{\alpha}^{t-1})\right|^{2s}d\mbd{\alpha}^{t-1}d\alpha_k\ll X^{2s-D+\epsilon}.
\end{aligned}
\end{equation*}
This range of $s$ is superior to those trivially obtained by Vinogradov's mean value theorem.


 
 \bigskip
 
 The consequences of Theorem 1.2 and Theorem 1.4 are dependent on $\sigma$, which is the quantity determined by $\mathbf{k}=(k_1,\ldots,k_t)$. Thus, we shall see how this quantity $\sigma$ varies according to the number of exponents and its arrangement.

 Recall the definition ($\ref{eq1.41.4}$) of the exponent $\sigma$ and that $\{i_1,\ldots,i_{k-t}\}=\{1,2\ldots,k_1\}\setminus\{k_t,\ldots,k_1\}$ with $i_1>\cdots>i_{k-t}.$ Then, we observe following:

\ 

 (1) Let $\mathbf{k}=(k,k-1,\ldots,k-(t-1))$ with $t<k/2.$ Then, by taking $l=t,$ one obtains
 
 $$\sigma=\max_{1\leq l\leq k-t}\frac{l}{(k-i_l)(k-i_l+1)}\geq\frac{t}{(2t-1)(2t)}=O(t^{-1}).$$  
 
 \bigskip
 
 (2) Let $t=m_1+m_2$. Let $$\mathbf{k}=(k,k-1,\ldots,k-(m_1-1),m_2,\ldots,1)$$
 with $m_1+m_2<k/2.$
 Then, by taking $l=m_1,$ one has $$\sigma=\max_{1\leq l\leq k-t}\frac{l}{(k-i_l)(k-i_l+1)}\geq\frac{m_1}{(2m_1-1)(2m_1)}=O(m_1^{-1}).$$
 
 \bigskip
 
 (3) Let $\mathbf{k}=(k,k_2,\ldots,k_t)$ with $k_1=k, k_2=k-1$ and $k_3\neq k-2.$ Then, by taking $l=1$, one has
 $\sigma=1/2.$

 \bigskip
 
Thus, if we assume that $\mathbf{k}=(k_1,\ldots,k_t)$ with $t<k/2,$ then one infers from the observations above that $\sigma$ is at least $O(t^{-1})$.

In section 2, we provide the proof of Theorem 1.3. The method of the proof of Theorem 1.3 mainly follows the argument in [$\ref{ref12}$] together with the argument used in [$\ref{ref5}$]. In section 3, we provide the proof of Theorem 1.4, by making use of Theorem 1.3. In section 4, we introduce applications of mean values of exponential sums to fractional parts of polynomials and provide the proof of Theorem 1.1. Furthermore, we record in Theorem 4.1 a more quantitative result than Theorem 1.1 and provide its proof at the end of section 4. In section 5, we give the proof of Theorem 1.2 by exploiting Theorem 1.4 and the method introduced in section 4.
Throughout this paper, we use $\gg$ and $\ll$ to denote Vinogradov's well-known notation, and write $e(z)$ for $e^{2\pi iz}$. We adopt the convention that whenever $\epsilon$ appears in a statement, then the statement holds for each $\epsilon>0$, with implicit constants depending on $\epsilon.$
\section*{Acknowledgment}
The author acknowledges support from NSF grant DMS-2001549 under the supervision of Trevor Wooley. The author is grateful for support from Purdue University. Especially, the author would like to thank Trevor Wooley for careful reading and helpful comments which have improved the exposition.

\bigskip
\section{Proof of Theorem 1.3}

In this section, we provide three lemmas and combine all to prove Theorem 1.3. 
\subsection{Auxiliary lemmas}

In order to describe Lemma 2.1, we recall that 
\begin{align*}
    F(\alpha_{k_1},\mbd{\alpha}^{t-1})=\dsum_{1\leq x \leq X}e(\alpha_{k_1}x^{k_1}+\alpha_{k_2}x^{k_2}+\cdots+\alpha_{k_t}x^{k_t})
\end{align*}
and
\begin{equation*}
    f(\alpha_{k_1},\boldsymbol{\alpha})=\dsum_{1\leq x\leq X}e(\alpha_{k_1}x^{k_1}+\alpha_{k_1-1}x^{k_1-1}+\cdots+\alpha_1 x).
\end{equation*}
Furthermore, recall 
$\{i_1,\ldots,i_{k-t}\}=\{1,2,\ldots,k_1\}\setminus\{k_1,\ldots,k_t\}.$ In advance of the statement of the following lemma, we define $\mathcal{I}(\alpha_k):=\mathcal{I}(\alpha_k;l)$ with $1\leq l\leq k-t$ by
$$\mathcal{I}(\alpha_k)=\dsum_{|g_{i_1}|\leq sX^{i_1}}\cdots\dsum_{|g_{i_l}|\leq sX^{i_l}}\doint|f(\alpha_{k},\mbd{\alpha})|^{2s} e(-\mbd{\alpha}^{(l)}\cdot\mbd{g})d\mbd{\alpha},$$
where $d\mbd{\alpha}$=$d\alpha_{k-1}\cdots d\alpha_{1}$ and $\mbd{\alpha}^{(l)}\cdot\mbd{g}=\alpha_{i_1}g_{i_1}+\cdots+\alpha_{i_l}g_{i_l}.$

\begin{llll}
For any $l$ with $1\leq l\leq k-t,$ we have
\begin{align*}
    \doint|F(\alpha_{k},\mbd{\alpha}^{t-1})|^{2s}d\mbd{\alpha}^{t-1} \ll X^{i_{l+1}+i_{l+2}+\cdots+i_{k-t}}\mathcal{I}(\alpha_k).
\end{align*}
\end{llll}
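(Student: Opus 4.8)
The plan is to recognise both sides as sums of squares of exponential sums in the surviving variable $\alpha_k$ and then to pass from one to the other by Cauchy--Schwarz. Expanding the $2s$-th power on the left, writing $F(\alpha_k,\mbd{\alpha}^{t-1})^s$ as a sum of $e(\cdots)$ over $s$-tuples $\mathbf{x}=(x_1,\dots,x_s)$ with $1\le x_i\le X$, and sorting the resulting terms by the values $w_{k_j}=x_1^{k_j}+\cdots+x_s^{k_j}$ for $2\le j\le t$, orthogonality of the characters $e(\alpha_{k_j}\,\cdot\,)$ on $[0,1)$ gives
\[
\doint|F(\alpha_k,\mbd{\alpha}^{t-1})|^{2s}\,d\mbd{\alpha}^{t-1}=\sum_{\mathbf{w}}|C(\mathbf{w})|^2,\qquad C(\mathbf{w})=\sum_{\substack{1\le x_i\le X\\ \sum_i x_i^{k_j}=w_{k_j}\ (2\le j\le t)}}e\Bigl(\alpha_k\sum_i x_i^{k_1}\Bigr).
\]

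The same device applied to $\Ic(\alpha_k)$ is the crux. Expanding $|f(\alpha_k,\mbd{\alpha})|^{2s}=f^s\,\overline{f^{\,s}}$ over pairs $\mathbf{x},\mathbf{y}$, integrating out $\alpha_1,\dots,\alpha_{k-1}$, and then summing over $g_{i_1},\dots,g_{i_l}$, the equality constraints attached to the exponents $i_1,\dots,i_l$ become vacuous, since $\sum_i x_i^{i_m}-\sum_i y_i^{i_m}$ always lies in the summation range $|g_{i_m}|\le sX^{i_m}$ when $1\le x_i,y_i\le X$. What survives is
\[
\Ic(\alpha_k)=\sum_{\substack{1\le x_i,y_i\le X\\ \sum_i x_i^{k_j}=\sum_i y_i^{k_j}\ (2\le j\le t)\\ \sum_i x_i^{i_m}=\sum_i y_i^{i_m}\ (l<m\le k-t)}}e\Bigl(\alpha_k\sum_i(x_i^{k_1}-y_i^{k_1})\Bigr),
\]
and sorting $\mathbf{x},\mathbf{y}$ by the common values $\mathbf{v}=(w_{k_2},\dots,w_{k_t},v_{i_{l+1}},\dots,v_{i_{k-t}})$ of the constrained power sums turns this into $\Ic(\alpha_k)=\sum_{\mathbf{v}}|B(\mathbf{v})|^2\ge 0$, where $B(\mathbf{v})$ collects $e(\alpha_k\sum_i x_i^{k_1})$ over those $\mathbf{x}$ with $1\le x_i\le X$, $\sum_i x_i^{k_j}=w_{k_j}$ and $\sum_i x_i^{i_m}=v_{i_m}$.

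Finally, since the conditions defining $B$ refine those defining $C$, one has $C(\mathbf{w})=\sum_{v_{i_{l+1}},\dots,v_{i_{k-t}}}B(\mathbf{w},v_{i_{l+1}},\dots,v_{i_{k-t}})$, and for each $\mathbf{w}$ at most $\prod_{m=l+1}^{k-t}(sX^{i_m})\ll X^{i_{l+1}+\cdots+i_{k-t}}$ of the summands are nonzero, because $v_{i_m}=\sum_i x_i^{i_m}\in[s,sX^{i_m}]$. Cauchy--Schwarz then gives $|C(\mathbf{w})|^2\ll X^{i_{l+1}+\cdots+i_{k-t}}\sum_{v_{i_{l+1}},\dots,v_{i_{k-t}}}|B(\mathbf{w},v_{i_{l+1}},\dots,v_{i_{k-t}})|^2$, and summing over $\mathbf{w}$, with $(\mathbf{w},v_{i_{l+1}},\dots,v_{i_{k-t}})$ running over the tuples $\mathbf{v}$, yields
\[
\doint|F(\alpha_k,\mbd{\alpha}^{t-1})|^{2s}\,d\mbd{\alpha}^{t-1}=\sum_{\mathbf{w}}|C(\mathbf{w})|^2\ll X^{i_{l+1}+\cdots+i_{k-t}}\sum_{\mathbf{v}}|B(\mathbf{v})|^2=X^{i_{l+1}+\cdots+i_{k-t}}\,\Ic(\alpha_k),
\]
which is the assertion. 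The only real subtlety is the bookkeeping in the middle step: one must track which power sums survive as equality constraints after the $g$-summation --- precisely those indexed by $k_2,\dots,k_t$ and by $i_{l+1},\dots,i_{k-t}$ --- and check that the $g$-ranges cover the associated differences so that $\Ic(\alpha_k)$ really is a nonnegative sum of squares. The orthogonality expansions and the single application of Cauchy--Schwarz are then routine.
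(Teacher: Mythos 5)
Your argument is correct and delivers exactly the bound of Lemma 2.1, but the bookkeeping is carried out in ``physical space'' rather than in Fourier space. The paper adjoins auxiliary coefficients $\beta_{l+1},\ldots,\beta_{k-t}$ conjugate to the exponents $i_{l+1},\ldots,i_{k-t}$, writes $\doint|F(\alpha_k,\mbd{\alpha}^{t-1})|^{2s}\,d\mbd{\alpha}^{t-1}$ as the Fourier sum $\sum_{\mathbf{m}}G(\alpha_k,\mathbf{m})$ over lattice points $|m_j|\leq sX^{i_j}$, then invokes the trivial bound $|G(\alpha_k,\mathbf{m})|\leq\doint|F(\alpha_k,\mbd{\alpha}^{t-1},\mbd{\beta}^{k-t-l})|^{2s}\,d\mbd{\beta}^{k-t-l}\,d\mbd{\alpha}^{t-1}$ on each Fourier coefficient and multiplies by the number of $\mathbf{m}$, which supplies the factor $X^{i_{l+1}+\cdots+i_{k-t}}$; the remaining integral is then identified with $\mathcal{I}(\alpha_k)$ by the same orthogonality-plus-vacuous-$g$-sum computation you perform. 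You instead expand both quantities at once into nonnegative sums of squares $\sum_{\mathbf{w}}|C(\mathbf{w})|^2$ and $\sum_{\mathbf{v}}|B(\mathbf{v})|^2$, observe that $C(\mathbf{w})$ splits into $O(X^{i_{l+1}+\cdots+i_{k-t}})$ of the $B$'s, and apply Cauchy--Schwarz. Your Cauchy--Schwarz step and the paper's ``(number of modes) $\times$ ($L^\infty$ bound on Fourier coefficients)'' step are dual formulations of the same inequality and yield identical loss, so neither buys a sharper constant here. The paper's framing stays at the level of exponential sums over the full coefficient box, which dovetails naturally with the shift-of-origin manoeuvre used immediately afterwards in Lemma 2.2, whereas your version makes explicit that $\mathcal{I}(\alpha_k)$ is a nonnegative Diophantine count and makes the source of the $X^{i_{l+1}+\cdots+i_{k-t}}$ loss very transparent. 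There is no gap in your argument.
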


\begin{proof}

Denote by $F(\alpha_{k},\mbd{\alpha}^{t-1},\mbd{\beta}^{k-t-l})=F(\alpha_{k_1},\ldots,\alpha_{k_t},\beta_{l+1},\ldots,\beta_{k-t};X)$ the exponential sum
$$\dsum_{1\leq x \leq X}e(\alpha_{k_1}x^{k_1}+\alpha_{k_2}x^{k_2}+\cdots+\alpha_{k_t}x^{k_t}+\beta_{l+1}x^{i_{l+1}}+\cdots+\beta_{k-t}x^{i_{k-t}}).$$
Furthermore, we denote
\begin{equation*}
    \sigma_{s,j}(\mathbf{x})=\dsum_{i=1}^{s}(x_i^j-x_{s+i}^j)\ \ \ \ \ \  (1\leq j\leq k)
\end{equation*}
and recall $k=k_1.$ We emphasize that in order to suppress multiple layer of suffices, it is convenient to write $k$ in place of $k_1$ in many places. 

As a preliminary manoeuvre, we represent the mean value involving $F(\alpha_{k},\mbd{\alpha}^{t-1})$ in terms of an analogous one involving $F(\alpha_{k},\mbd{\alpha}^{t-1},\mbd{\beta}^{k-t-l}).$ Observe that when $\mathbf{m}=(m_{l+1},\ldots, m_{k-t})\in \Z^{k-t-l},$ if we define
 \begin{equation*}
   G(\alpha_k, \mathbf{m}):=\doint|F(\alpha_{k},\mbd{\alpha}^{t-1},\mbd{\beta}^{k-t-l})|^{2s}e(-\beta_{l+1}m_{l+1}-\cdots-\beta_{k-t}m_{k-t})d\mbd{\beta}^{k-t-l}d\mbd{\alpha}^{t-1},
 \end{equation*}
 then one has
   \begin{align}\label{99999}
    \begin{aligned}
   G(\alpha_k, \mathbf{m}) =\dsum_{1\leq\mathbf{x}\leq X}\delta(\mathbf{x}, \mathbf{m})\doint e(\alpha_{k_1}\sigma_{s,k_1}(\mathbf{x})+\cdots+\alpha_{k_t}\sigma_{s,k_t}(\mathbf{x}))d\mbd{\alpha}^{t-1},
    \end{aligned}
\end{align}
where
\begin{equation*}
    \delta(\mathbf{x}, \mathbf{m})=\dprod_{j=l+1}^{k-t}\left(\dint_0^1e(\beta_{j}(\sigma_{s,i_j}(\mathbf{x})-m_{j}))d\beta_{i_j} \right).
\end{equation*}
By orthogonality, one has
\begin{equation*}
       \dint_0^1e(\beta_{j}(\sigma_{s,i_j}(\mathbf{x})-m_{j}))d\beta_{j}=\left\{\begin{array}{l} 1,\ \ \textrm{when}\ \sigma_{s,i_j}(\mathbf{x})=m_{j},\\
       0,\ \ \textrm{when}\ \sigma_{s,i_j}(\mathbf{x})\neq m_{j}.\end{array}\right.
\end{equation*}
When $1\leq\mathbf{x}\leq X,$ moreover, one has $|\sigma_{s,i_j}(\mathbf{x})|\leq sX^{i_j}\ (l+1\leq j\leq k-t),$ and so 

$$\dsum_{|m_{l+1}|\leq sX^{i_{l+1}}}\cdots\dsum_{|m_{k-t}|\leq sX^{i_{k-t}}}\delta(\mathbf{x}, \mathbf{m})=1.$$
Consequently, on noting that 
$$\dsum_{1\leq \mathbf{x}\leq X}e(\alpha_{k}\sigma_{s,k_1}(\mathbf{x})+\alpha_{k_2}\sigma_{s,k_1}(\mathbf{x})+\cdots+\alpha_{k_t}\sigma_{s,k_t}(\mathbf{x}))=|F(\alpha_{k},\mbd{\alpha}^{t-1})|^{2s},$$
 we deduce from ($\ref{99999}$) that 
 \begin{equation}\label{9999}
 \begin{aligned}
&\dsum_{|m_{l+1}|\leq sX^{i_{l+1}}}\dsum_{|m_{k-t}|\leq sX^{i_{k-t}}}G(\alpha_k,\mathbf{m})\\
&=\doint\dsum_{1\leq\mathbf{x}\leq X}\biggl(\dsum_{\mathbf{m}}\delta(\mathbf{x},\mathbf{m})\biggr)e(\alpha_{k}\sigma_{s,k_1}(\mathbf{x})+\alpha_{k_2}\sigma_{s,k_1}(\mathbf{x})+\cdots+\alpha_{k_t}\sigma_{s,k_t}(\mathbf{x}))d\mbd{\alpha}^{t-1}\\
&=\doint|F(\alpha_{k},\mbd{\alpha}^{t-1})|^{2s}d\mbd{\alpha}^{t-1}.
\end{aligned}
 \end{equation}
   Therefore, it follows from ($\ref{99999}$) and ($\ref{9999}$) with the triangle inequality that 
   \begin{equation}\label{5'}
   \begin{aligned}
 & \doint|F(\alpha_{k},\mbd{\alpha}^{t-1})|^{2s}d\mbd{\alpha}^{t-1}\\
  &\leq \dsum_{|m_{l+1}|\leq sX^{i_{l+1}}}\cdots\dsum_{|m_{k-t}|\leq sX^{i_{k-t}}} \doint|F(\alpha_{1},\mbd{\alpha}^{t-1},\mbd{\beta}^{k-t-l})|^{2s}d\mbd{\beta}^{k-t-l}d\mbd{\alpha}^{t-1}  \\
  &\ll X^{i_{l+1}+i_{l+2}+\cdots+i_{k-t}}\doint|F(\alpha_{1},\mbd{\alpha}^{t-1},\mbd{\beta}^{k-t-l})|^{2s} d\mbd{\beta}^{k-t-l}d\mbd{\alpha}^{t-1}. 
   \end{aligned}
   \end{equation}
   
Next, an argument similar to that used above allows us to show that 
\begin{equation}\label{5}
\begin{aligned}
    &\doint|F(\alpha_{k},\mbd{\alpha}^{t-1},\mbd{\beta}^{k-t-l})|^{2s} d\mbd{\beta}^{k-t-l}d\mbd{\alpha}^{t-1}\\
    &=\dsum_{|g_{i_1}|\leq sX^{i_1}}\cdots\dsum_{|g_{i_l}|\leq sX^{i_l}}\doint|f(\alpha_{k},\mbd{\alpha})|^{2s} e(-\mbd{\alpha}^{(l)}\cdot\mbd{g})d\mbd{\alpha}.
\end{aligned}
\end{equation}
 Thus, on substituting ($\ref{5}$) into ($\ref{5'}$), we complete the proof of Lemma 2.1. 
\end{proof}



\bigskip

\bigskip


In order to describe Lemma 2.2, we require a preliminary step. Observe that by shifting the variable of summation, for each integer $y$ one has
\begin{equation}\label{6}
f(\alpha_{k},\mbd{\alpha})=\dsum_{1+y\leq x\leq X+y}e(\psi(x-y;\alpha_k,\mbd{\alpha})),
\end{equation}
where 
$$\psi(z;\alpha_k,\mbd\alpha)=\alpha_1z+\cdots+\alpha_kz^k.$$
But as a consequence of the Binomial Theorem, if we adopt the convention that $\alpha_0=0$, then we may write $\psi(x-y;\alpha_k,\mbd\alpha)$ in the shape

$$\psi(x-y;\alpha_k,\mbd{\alpha})=\dsum_{i=0}^k\beta_ix^i,$$
where
$$\beta_i=\dsum_{j=i}^k \binom{j}{i}(-y)^{j-i}\alpha_j\ \  (0\leq i \leq k).$$
Write 
\begin{equation}\label{2.62.6}
K(\gamma)=\dsum_{1\leq z \leq X}e(-\gamma z).   
\end{equation}
Then we deduce from ($\ref{6}$) that when $1\leq y\leq X$, one has
\begin{equation}\label{7}
f(\alpha_k,\mbd{\alpha})=\dint_0^1f_y(\alpha_k,\mbd{\alpha};\gamma)K(\gamma)d\gamma,
\end{equation}
where we have written 
$$f_y(\alpha_k,\mbd{\alpha}; \gamma)=\dsum_{1\leq x\leq 2X}e(\psi(x-y;\alpha_k,\mbd{\alpha})+\gamma(x-y)).$$

Define
\begin{equation*}\label{2.72.7}
\mathcal{F}_y(\alpha_k,\mbd{\alpha};\mbd{\gamma})=\dprod_{i=1}^{s}f_y(\alpha_k,\mbd{\alpha};\gamma_i)f_y(-\alpha_k,-\mbd{\alpha};-\gamma_{s+i}),    
\end{equation*}
and 
$$\omega_{y,\boldsymbol{\gamma}}=e(-(\gamma_1+\cdots+\gamma_s-\gamma_{s+1}-\cdots-\gamma_{2s})y)=e(-\Gamma y).$$

To facilitate the statement of Lemma 2.2, it is convenient to introduce some notation. Recall $\{i_1,\ldots,i_{k-t}\}=\{1,2,\ldots,k_1\}\setminus\{k_1,\ldots,k_t\}.$ Furthermore, we adopt the notation $\alpha_i=0$ for $i\notin \{1,\ldots, k\}.$ 
Then, we define the exponential sum $\Xi(\alpha_k,\mbd{\alpha})=\Xi(\alpha_k,\mbd{\alpha};l;\boldsymbol{\gamma})$ with $1\leq l\leq k-t$ by
\begin{equation*}\label{2.112.11}
     \Xi(\alpha_k,\mbd{\alpha}) =X^{-1}\dsum_{1\leq y\leq X}\dsum_{|h_{i_1}|\leq sX^{i_1}}\cdots\dsum_{|h_{i_l}|\leq sX^{i_l}}\omega_{y,\mbd{\gamma}}e\biggl(-\dsum_{ m=0}^{k-i_l}\delta_{m} y^{m}\biggr),
\end{equation*}
where 
\begin{equation}\label{2.82.8}
    \delta_{m}=\dsum_{n=1}^l\alpha_{m+i_n}\binom{m+i_n}{i_n}h_{i_n}.
\end{equation}
Therefore, on recalling that the definition of $\mathcal{I}(\alpha_k):=\mathcal{I}(\alpha_k;l)$ in the statement of Lemma 2.1, we have the following lemma.
\begin{llll}
For any $l$ with $1\leq l\leq k-t,$ we have
\begin{equation*}
    \mathcal{I}(\alpha_k)\ll \doint \doint \mathcal{F}_0(\alpha_k,\mbd{\alpha};\mbd{\gamma})\Xi(\alpha_k,\mbd{\alpha})\tilde{K}(\mbd{\gamma})d\mbd{\alpha}d\mbd{\gamma},
\end{equation*}
where $\tilde{K}(\mbd{\gamma})=\dprod_{i=1}^s K(\gamma_i)K(-\gamma_{s+i}).$
\end{llll}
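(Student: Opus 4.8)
The plan is to start from the definition of $\mathcal{I}(\alpha_k)$ and to unfold each factor $|f(\alpha_k,\mbd\alpha)|^{2s}$ using the shifted representation (\ref{7}). That identity lets us write $f(\alpha_k,\mbd\alpha)=\int_0^1 f_y(\alpha_k,\mbd\alpha;\gamma)K(\gamma)\,d\gamma$ for \emph{any} $1\le y\le X$, so for a product of $2s$ such factors I would introduce $2s$ independent shift variables; but in fact it is cleaner to use a single shift $y$ common to all $2s$ factors, average over $1\le y\le X$, and divide by $X$, which is exactly where the factor $X^{-1}\sum_{1\le y\le X}$ in $\Xi$ comes from. After expanding, the $2s$-fold product of the $f_y(\pm\alpha_k,\pm\mbd\alpha;\gamma_i)$ assembles into $\mathcal{F}_0(\alpha_k,\mbd\alpha;\mbd\gamma)$ up to the phase coming from the shift: writing $\psi(x-y;\alpha_k,\mbd\alpha)+\gamma(x-y)$ and using the Binomial Theorem to re-expand $\psi(x-y;\cdot)$ as $\sum_{i=0}^k\beta_i x^i$ produces, for the $i$-th factor, the argument $\psi(x;\cdots)$ plus the shift phase $-\Gamma y$ absorbed into $\omega_{y,\mbd\gamma}$. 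So far this is just bookkeeping with the binomial coefficients $\binom{j}{i}(-y)^{j-i}$.

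The key structural step is the interaction between the shift phases and the summations over $g_{i_1},\dots,g_{i_l}$ with the characters $e(-\mbd\alpha^{(l)}\cdot\mbd g)=e(-\alpha_{i_1}g_{i_1}-\cdots-\alpha_{i_l}g_{i_l})$ present in $\mathcal{I}(\alpha_k)$. After the shift, each $\alpha_{i_n}$ acquires, through the binomial re-expansion of the neighbouring powers $x^{i_n+1},\dots,x^{k}$ evaluated at $x-y$, a contribution $\sum_{m\ge 1}\binom{m+i_n}{i_n}(-y)^{m}\alpha_{m+i_n}$ multiplying $x^{i_n}$-type terms; pairing this against $g_{i_n}$ (which I would now rename $h_{i_n}$ and recognise as running over $|h_{i_n}|\le sX^{i_n}$, matching $|\sigma_{s,i_n}(\mathbf x)|\le sX^{i_n}$) yields precisely the polynomial phase $\sum_{m=0}^{k-i_l}\delta_m y^m$ with $\delta_m=\sum_{n=1}^{l}\alpha_{m+i_n}\binom{m+i_n}{i_n}h_{i_n}$ as in (\ref{2.82.8}). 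The upper limit $k-i_l$ is correct because $m+i_n\le k$ and $i_n\ge i_l$ for $n\le l$. Collecting the $y$-dependent phases gives exactly $\omega_{y,\mbd\gamma}\,e(-\sum_{m=0}^{k-i_l}\delta_m y^m)$, i.e.\ the integrand of $\Xi(\alpha_k,\mbd\alpha)$, and the $\gamma_i$-integrals carry along the kernel $\tilde K(\mbd\gamma)=\prod_{i=1}^s K(\gamma_i)K(-\gamma_{s+i})$.

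The one genuine inequality (the $\ll$ rather than $=$) enters when I replace the $x$-summations in $f_y(\cdot;\gamma)$, which after the shift naturally range over $1+y\le x\le X+y$, by the fixed range $1\le x\le 2X$ as in the definition of $f_y$; this only adds nonnegative terms after taking absolute values inside, so positivity has to be used carefully here, and this is the point I expect to need the most care — one must make sure the re-expansion/extension is done before taking moduli, so that the combinatorial identity producing $\Xi$ stays exact while the range extension is a clean one-sided bound. Everything else is orthogonality (to reintroduce the $h_{i_n}$ sums exactly as in the proof of Lemma 2.1) plus the Binomial Theorem. I would finish by assembling the three ingredients — unfolding $\mathcal{I}(\alpha_k)$, the shift-and-average, and the binomial re-expansion — into the single displayed bound
$$\mathcal{I}(\alpha_k)\ll \doint\doint \mathcal{F}_0(\alpha_k,\mbd\alpha;\mbd\gamma)\,\Xi(\alpha_k,\mbd\alpha)\,\tilde K(\mbd\gamma)\,d\mbd\alpha\,d\mbd\gamma,$$
which is the assertion of Lemma 2.2.
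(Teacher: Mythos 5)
Your plan has the right skeleton — substitute the shifted representation \eqref{7} into $\mathcal{I}(\alpha_k)$, expand the $2s$ factors, apply orthogonality to reinstate the $h_{i_n}$-sums, use the Binomial Theorem to collect the $y$-powers into $\sum_{m=0}^{k-i_l}\delta_m y^m$, absorb the $\gamma$-phase into $\omega_{y,\mbd\gamma}$, and average over $1\le y\le X$ to produce $\Xi$ — and this is indeed how the paper proceeds. However, there is one genuine misconception in how you account for the range extension, and it is exactly the spot you flag as "needing the most care."

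You claim that the single $\ll$ in the statement arises when the $x$-summation is widened from $1+y\le x\le X+y$ to the fixed range $1\le x\le 2X$, and that this extension is a ``clean one-sided bound'' justified by positivity after taking moduli. That mechanism does not work: extending the range of a sum of unimodular exponentials does not majorize the modulus, since the new terms can cancel the old ones. The point of the Fourier kernel $K(\gamma)=\sum_{1\le z\le X}e(-\gamma z)$ in \eqref{7} is precisely to make the extension \emph{exact}. Indeed, for $1\le y\le X$,
\[
\dint_0^1 f_y(\alpha_k,\mbd\alpha;\gamma)K(\gamma)\,d\gamma
=\dsum_{1\le x\le 2X}\dsum_{1\le z\le X}e(\psi(x-y;\alpha_k,\mbd\alpha))\dint_0^1 e(\gamma(x-y-z))\,d\gamma
=\dsum_{1\le z\le X}e(\psi(z;\alpha_k,\mbd\alpha)),
\]
by orthogonality, which is $f(\alpha_k,\mbd\alpha)$ on the nose. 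There is no moment in the argument where one ``replaces a range and takes moduli''; the widened range $1\le x\le 2X$ is built into the definition of $f_y$ from the outset, and the kernel integration restricts it back exactly. Consequently every step of the manipulation — \eqref{8}, \eqref{10}, \eqref{13}, \eqref{15}, \eqref{2.172.17}, \eqref{17} — is an identity. The $\ll$ in the lemma enters only at the last, trivial step $\mathcal{I}(\alpha_k)\ll X^{-1}\sum_{1\le y\le X}\mathcal{I}(\alpha_k)$, where $\mathcal{I}(\alpha_k)$ is independent of $y$ and the implicit constant merely absorbs the rounding of $X$. If you carried out your plan as written and tried to justify the range extension by a one-sided positivity argument, you would be stuck; recognizing that \eqref{7} is an exact Fourier identity removes the difficulty entirely.
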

\begin{proof}

On substituting ($\ref{7}$) into $\mathcal{I}(\alpha_k)$, we deduce that when $1\leq y\leq X$, one has
\begin{equation}\label{8}
\begin{aligned}
  \mathcal{I}(\alpha_k)=\dsum_{|g_{i_1}|\leq sX^{i_1}}\cdots\dsum_{|g_{i_l}|\leq sX^{i_l}}\doint I_{\mbd{g}}(\mbd{\gamma},y)\tilde{K}(\mbd{\gamma})d\mbd{\gamma},
\end{aligned}
\end{equation}
where 
\begin{equation}\label{9}
   I_{\mbd{g}}(\mbd{\gamma},y)=\doint\mathcal{F}_y(\alpha_k,\mbd{\alpha};\mbd{\gamma})e(-\mbd{\alpha}^{(l)}\cdot \mbd{g})d\mbd{\alpha}.
\end{equation}
By orthogonality, one finds that
\begin{equation}\label{10}
    \doint\mathcal{F}_y(\alpha_k,\mbd{\alpha};\mbd{\gamma})e(-\mbd{\alpha}^{(l)}\cdot \mbd{g})d\mbd{\alpha}=\dsum_{1\leq \mathbf{x}\leq 2X}\Delta(\alpha_k,\mbd{\gamma},\mbd{g},y),
\end{equation}
where $\Delta(\alpha_k,\mbd{\gamma},\mbd{g},y)$ is equal to 
$$e\biggl(\dsum_{i=1}^s(\alpha_k((x_{i}-y)^k-(x_{s+i}-y)^k)+\gamma_i(x_i-y)-\gamma_{s+i}(x_{s+i}-y))\biggr),$$
when 
\begin{equation}\label{2.102.10}
\dsum_{i=1}^s((x_i-y)^j-(x_{s+i}-y)^j)=h_j\ \textrm{with}\ 1\leq j\leq k-1,    
\end{equation}
in which $h_j=g_{j}$ when $j\in \{i_1,\ldots,i_l\}$, and $h_j=0$ when $j\notin \{i_1,\ldots,i_l\}$.
Otherwise, one finds that $\Delta(\alpha_k,\mbd{\gamma},\mbd{g},y)=0.$

By applying the Binomial Theorem within $(\ref{2.102.10})$, we have 
\begin{equation}\label{13}
    \dsum_{i=1}^s(x_i^j-x_{s+i}^j)=\dsum_{l=1}^{j}\binom{j}{l}h_ly^{j-l}\ \ \ (1\leq j\leq k-1),
\end{equation}
and
 \begin{equation}\label{15}
    \dsum_{i=1}^s(x_i^k-x_{s+i}^k)=\dsum_{l=1}^{k-1}\binom{k}{l}h_ly^{k-l}+\dsum_{i=1}^s((x_i-y)^k-(x_{s+i}-y)^k).
\end{equation} 

 By orthogonality, one infers from ($\ref{10}$),($\ref{13}$) and ($\ref{15}$) that by putting $h_k=0$
\begin{equation*}
\begin{aligned}
   \doint\mathcal{F}_y(\alpha_k,\mbd{\alpha};\mbd{\gamma})e(-\mbd{\alpha}^{(l)}\cdot \mbd{g})d\mbd{\alpha}= \omega_{y,\gamma}\doint \mathcal{F}_0(\alpha_k,\mbd{\alpha};\mbd{\gamma})e\biggl(-\dsum_{j=1}^k\alpha_{j}\biggl(\dsum_{ l=1}^j\binom{j}{l}h_ly^{j-l}\biggr)\biggr)d\mbd{\alpha},
\end{aligned}
\end{equation*}
where $\omega_{y,\mbd{\gamma}}=e(-\Gamma y)$ in which $\Gamma=\gamma_1+\cdots+\gamma_s-\gamma_{s+1}-\cdots-\gamma_{2s}.$ We now collect together terms corresponding to each power of $y$. On recalling $h_n=0$ when $n\notin \{i_1,\ldots,i_l\}$ and since by $j\leq k$,
    the highest degree of $y$ is $k-i_l$.
Furthermore, on recalling that $\alpha_j=0$ for $j\notin\{1,\ldots,k\}$ and the definition ($\ref{2.82.8}$) of $\delta_m,$ we find that
\begin{equation}\label{2.172.17}
\begin{aligned}
    \dsum_{ j=1}^k\alpha_{j}\biggl(\dsum_{ l=1}^j\binom{j}{l}y^{j-l}h_l\biggr)&=\dsum_{ m=0}^{k-i_l}\biggl(\dsum_{n=1}^l\alpha_{m+i_n}\binom{m+i_n}{i_n}h_{i_n}\biggr)y^{m}=\dsum_{ m=0}^{k-i_l}\delta_my^{m}.
\end{aligned}
\end{equation}
Since $\alpha_{m+i_n}=0$ for $m+i_n>k,$ it is worth noting that no contribution arises from $n$ with $i_n> k-m$, in $\delta_m.$

From here, we are led from ($\ref{9}$) to the relation
\begin{equation*}
\begin{aligned}
&\dsum_{|g_{i_1}|\leq sX^{i_1}}\cdots\dsum_{|g_{i_l}|\leq sX^{i_l}} I_{\mbd{g}}(\mbd{\gamma},y)\\
&=\doint \mathcal{F}_0(\alpha_k,\mbd{\alpha};\mbd{\gamma})\dsum_{|h_{i_1}|\leq sX^{i_1}}\cdots\dsum_{|h_{i_l}|\leq sX^{i_l}}\omega_{y,\mbd{\gamma}}e\biggl(-\dsum_{ m=0}^{k-i_l}\delta_{m}y^{m}\biggr)d\mbd{\alpha}.
\end{aligned}
\end{equation*}
Since we took $y$ in $[1,X]$, we may conclude thus far
\begin{equation}\label{17}
\begin{aligned}
    X^{-1}\dsum_{1\leq y\leq X}\dsum_{|g_{i_1}|\leq sX^{i_1}}\cdots\dsum_{|g_{i_l}|\leq sX^{i_l}} I_{\mbd{g}}(\mbd{\gamma},y)=\doint \mathcal{F}_0(\alpha_k,\mbd{\alpha};\mbd{\gamma})\Xi(\alpha_k,\mbd{\alpha})d\mbd{\alpha}.
\end{aligned}
\end{equation}

 Therefore, from ($\ref{8}$) and $(\ref{17})$, we conclude thus far that 
\begin{equation*}\label{2.17}
\begin{aligned}
   \mathcal{I}(\alpha_k)
   &\ll X^{-1}\dsum_{1\leq y\leq X}\mathcal{I}(\alpha_k)\\
   & =X^{-1}\dsum_{1\leq y\leq X}\dsum_{|g_{i_1}|\leq sX^{i_1}}\cdots\dsum_{|g_{i_l}|\leq sX^{i_l}}\doint I_{\mbd{g}}(\mbd{\gamma},y)\tilde{K}(\mbd{\gamma})d\mbd{\gamma}\\
   &=\doint \doint \mathcal{F}_0(\alpha_k,\mbd{\alpha};\mbd{\gamma})\Xi(\alpha_k,\mbd{\alpha})\tilde{K}(\mbd{\gamma})d\mbd{\alpha}d\mbd{\gamma}.
\end{aligned}
\end{equation*}
\end{proof}

 
 \bigskip

We recall that
\begin{equation*}
     \Xi(\alpha_k,\mbd{\alpha}) =X^{-1}\dsum_{1\leq y\leq X}\dsum_{|h_{i_1}|\leq sX^{i_1}}\cdots\dsum_{|h_{i_l}|\leq sX^{i_l}}\omega_{y,\mbd{\gamma}}e\biggl(-\dsum_{ m=0}^{k-i_l}\delta_{m} y^{m}\biggr),
\end{equation*}
where 
\begin{equation*}
    \delta_{m}=\dsum_{n=1}^l\alpha_{m+i_n}\binom{m+i_n}{i_n}h_{i_n}.
\end{equation*}
We provide the upper bound for $\Xi(\alpha_k,\boldsymbol{\alpha})$ in terms of the denominator stemming from rational approximation to $\alpha_k$, by obtaining savings from all summations over $h_{i_1},\ldots, h_{i_l}.$
\begin{llll}
Suppose that $|\alpha_k-a/q|\leq q^{-2}$ with $(q,a)=1$. Then, for any $l$ with $1\leq l\leq k-t,$ we have
$$\Xi(\alpha_k,\boldsymbol{\alpha})\ll X^{i_1+\cdots+i_l+\epsilon}\left(\displaystyle\prod_{j=1}^l\left(q^{-1}+X^{-i_j}+{X^{-k+i_j}}+qX^{-k}\right)\right)^{1/((k-i_l)(k-i_l+1))}.$$
\end{llll}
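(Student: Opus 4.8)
The plan is to estimate $\Xi(\alpha_k,\boldsymbol{\alpha})$ by exploiting the fact that the phase $\sum_{m=0}^{k-i_l}\delta_m y^m$ is a polynomial in $y$ whose leading coefficient $\delta_{k-i_l}$ depends linearly on $h_{i_1}$ (since among the indices $i_1>\cdots>i_l$ only $i_l$ contributes to the top power $y^{k-i_l}$, while $i_1$ is the smallest power appearing in the other $\delta_m$); more importantly, the coefficients $\delta_m$ for $m$ close to $k-i_l$ involve $h_{i_1},\dots,h_{i_l}$ weighted by $\alpha_{m+i_n}$, and in particular $\delta_{k-i_l}$ contains the term $\alpha_{k}\binom{k}{i_l}h_{i_l}$, i.e. it is governed by the well-approximated coefficient $\alpha_k$. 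First I would drop the $\omega_{y,\boldsymbol{\gamma}}$ factor (it has modulus one and does not depend on $\boldsymbol{\alpha}$, so it can be absorbed after the triangle inequality) and bound $\Xi$ by
$$
\Xi(\alpha_k,\boldsymbol{\alpha})\ll X^{-1}\sum_{|h_{i_1}|\le sX^{i_1}}\cdots\sum_{|h_{i_l}|\le sX^{i_l}}\Bigl|\sum_{1\le y\le X}e\Bigl(-\sum_{m=0}^{k-i_l}\delta_m y^m\Bigr)\Bigr|.
$$
Then I would apply the standard Weyl differencing / van der Corport estimate to the inner sum over $y$: a polynomial of degree $K:=k-i_l$ with leading coefficient $\theta$ having a rational approximation $|\theta-b/r|\le r^{-2}$ satisfies a Weyl bound of the shape $X^{1+\epsilon}(r^{-1}+X^{-1}+rX^{-K})^{1/(K(K+1))}$ (this is the classical Weyl inequality, e.g. as in the references invoked in the paper for $[\ref{ref12}]$ and $[\ref{ref5}]$).

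The key step — and where most of the work sits — is passing from the rational approximation to $\alpha_k$ to a rational approximation to the actual leading coefficient $\delta_{K}=\sum_{n=1}^{l}\alpha_{i_l+i_n}\binom{i_l+i_n}{i_n}h_{i_n}$, noting that the only term with $i_l+i_n=k$ is $n=l$, giving $\delta_K=\binom{k}{i_l}h_{i_l}\alpha_k + (\text{terms with }\alpha_j,\ j<k)$. Here one must be careful: the lower-order $\alpha_j$ are genuine free variables being integrated over, so I would instead run the Weyl estimate treating $\delta_m$ for \emph{each} $m$ with $i_l\le k-m$, i.e. organize the argument around whichever $\delta_m$ is "well-approximated". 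The cleanest route is: for each fixed choice of $h_{i_1},\dots,h_{i_{l-1}}$, view the sum over $h_{i_l}$ (of length $\asymp X^{i_l}$) and over $y$ together; complete the sum over $h_{i_l}$ and detect, via a divisor-type / large-sieve counting argument as in $[\ref{ref12}]$, how often the leading coefficient $\binom{k}{i_l}h_{i_l}\alpha_k$ is close to an integer — this is controlled by $q^{-1}+X^{-k}q$ together with the length $X^{i_l}$ of the $h_{i_l}$-range and the $X^{-i_l}$, $X^{-k+i_l}$ terms accounting for the minor arc/diagonal contributions. Iterating this over $h_{i_{l-1}},\dots,h_{i_1}$ produces the product $\prod_{j=1}^{l}(q^{-1}+X^{-i_j}+X^{-k+i_j}+qX^{-k})$, each factor raised to the Weyl exponent $1/(K(K+1))=1/((k-i_l)(k-i_l+1))$, and the trivial saving $X^{i_j}$ from each $h_{i_j}$-sum that is not differenced accounts for the $X^{i_1+\cdots+i_l+\epsilon}$ prefactor.

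The main obstacle I anticipate is bookkeeping the interaction between the $l$ summation variables $h_{i_1},\dots,h_{i_l}$ and the single approximation to $\alpha_k$: the Weyl estimate for the $y$-sum only directly sees the top coefficient $\delta_K$, which isolates $h_{i_l}$, so one needs an inductive/nested scheme (peeling off $h_{i_l}$, then $h_{i_{l-1}}$, etc., each time either invoking Weyl or using a mean-square / counting bound) to extract a saving from \emph{every} $h_{i_j}$ rather than just from $h_{i_l}$ — and one must verify that the exponent $1/((k-i_l)(k-i_l+1))$ (coming from the \emph{smallest} relevant degree $k-i_l$, since $i_l$ is the largest index) is the one that survives uniformly across all $l$ factors, which is exactly why the degree $k-i_l$ of the $y$-polynomial, not $k-i_j$, enters. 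I would model the details closely on the corresponding lemma in Wooley $[\ref{ref12}]$ and the differencing bookkeeping in $[\ref{ref5}]$, checking that the presence of the missing exponents $\{i_1,\dots,i_{k-t}\}$ (as opposed to a full set of consecutive exponents) does not affect the argument beyond the relabeling already built into the notation.
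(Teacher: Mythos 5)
Your proposal takes a genuinely different route from the paper, and as sketched it has a gap that would prevent it from reaching the stated bound.

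The paper does not apply a pointwise Weyl inequality to the $y$-sum at all. Its mechanism is: (a) apply H\"older's inequality to pass from the sum defining $\Xi(\alpha_k,\boldsymbol{\alpha})$, which involves the first power of the $y$-sum, to the $2p$-th moment $\Upsilon_p(\boldsymbol{\delta};X)=\sum_{h_{i_1}}\cdots\sum_{h_{i_l}}|S^*(\boldsymbol{\delta};X)|^{2p}$ with $2p=(k-i_l)(k-i_l+1)$; (b) use Bombieri's box lemma [$\ref{ref5}$, Lemma 1] to replace $|S^*(\boldsymbol{\delta};X)|^{2p}$ by an average of $|S^*(\boldsymbol{\theta};X)|^{2p}$ over the box $\Omega(X)$ around $\boldsymbol{\delta}'$; (c) interchange the $h$-sum and the $\boldsymbol{\theta}$-integral and bound the multiplicity by a counting function $H_l(\boldsymbol{\theta})$, which is controlled by [$\ref{ref5}$, Lemma 3] since each $\delta_{k-i_j}$ is affine in $h_{i_j}$ with slope proportional to $\alpha_k$; and (d) apply Vinogradov's mean value theorem at the critical exponent to $\int|S^*(\boldsymbol{\theta};X)|^{2p}d\boldsymbol{\theta}$. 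The exponent $1/((k-i_l)(k-i_l+1))$ on $R_l$ is $1/(2p)$, i.e.\ the reciprocal of the H\"older/VMVT exponent; it is not a Weyl exponent (the modern Weyl-strength bound from VMVT gives $1/(K(K-1))$, not $1/(K(K+1))$, so your attribution is off in any case).

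The gap in your plan is structural. A pointwise Weyl estimate on $\sum_y e(-\sum_m\delta'_m y^m)$ can only extract a saving from the \emph{leading} coefficient $\delta_{k-i_l}$, which involves $h_{i_l}$ alone; it cannot, on its own, produce a product of $l$ savings factors, one per summation variable $h_{i_j}$. Your proposed ``iteration'' does not make sense: once you have summed over $h_{i_l}$ (and bounded the resulting quantity), there is no longer a $y$-sum left to which a second Weyl estimate could be applied for $h_{i_{l-1}}$, and so on. What makes the paper's argument work is precisely that the $l$ savings come in at the counting stage (c), each at \emph{full} power in $H_l(\boldsymbol{\theta})\ll X^{i_1+\cdots+i_l}\prod_{j=1}^l(q^{-1}+X^{-i_j}+X^{-k+i_j}+qX^{-k})$, and the exponent $1/(2p)$ is then applied once to the entire product via H\"older at the very end. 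Your version instead wants each factor already damped to the $1/(K(K+1))$-th power as it comes out of a Weyl bound, which is a different shape of estimate and would not recombine into the claimed bound. In short, the H\"older step and the Bombieri counting lemma are the essential ingredients, and they cannot be replaced by a pointwise Weyl estimate with an informal iteration over the $h_{i_j}$.
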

In the proof of Lemma 2.3, we bound $\Xi(\alpha_k,\boldsymbol{\alpha})$ by mean value type estimates. Furthermore, we use Vinogradov's mean value theorem to deal with these mean value type estimates.
The argument described here is applicable to all possible arrangements of exponents $\mathbf{k}=(k_1,\ldots,k_t)$ with $t<k$. Especially, this argument is useful for the case $k_1-1=k_2$ and $t<k_1/2.$ Even for the case that $k_1-1>k_2$, experts will recognize that by taking $l=1$ the sum $\Xi(\alpha_k,\boldsymbol{\alpha})$ becomes the exponential sum with phase linear in $y$, and in this case a variant of our arguments coincides with the proof of [$\ref{ref12}$, Theorem 1.3] and [$\ref{ref15}$, Theorem 14.4].


\begin{proof}[Proof of Lemma 2.3]
On recalling that $\omega_{y,\boldsymbol{\gamma}}=e(-\Gamma y),$ we may rewrite summands in $\Xi(\alpha_k,\boldsymbol{\alpha})$ as $e(-\sum_{m=0}^{k-i_l}\delta_{m}'y^{m}),$ where $\delta'_{n}=\delta_n\ (n\neq 1)$ and $\delta'_1=\delta_1+\Gamma.$

Define 
$$S^*(\mbd{\delta};X)=\sup_{I\subseteq [1,X]}\left|\dsum_{ y\in I}e\biggl(-\dsum_{1\leq m\leq k-i_l}\delta_{m}'y^{m}\biggr)\right|$$ where
$I$ runs over all intervals in $[1,X].$ In particular, we write $S(\boldsymbol{\delta};X)$ for the sum with $I=[1,X]$. Here and later, we put $2p=(k-i_l)(k-i_l+1).$ Define
\begin{equation}\label{19}
    \Upsilon_{p}(\mbd{\delta};X)=\dsum_{|h_{i_1}|\leq sX^{i_1}}\cdots\dsum_{|h_{i_l}|\leq sX^{i_l}}\left|S^*(\mbd{\delta};X)\right|^{2p}.
\end{equation}
Then, by applying H\"older's inequality to $\Xi(\alpha_k,\mbd{\alpha})$, we have
\begin{equation}\label{2.18}
\begin{aligned}
       \Xi(\alpha_k,\mbd{\alpha}) \leq X^{-1}(\Upsilon_{p}(\mbd{\delta};X))^{1/(2p)}X^{(i_1+\cdots+i_l)\left(1-1/(2p)\right)}.
\end{aligned}
\end{equation}

We first analyze $\Upsilon_{p}(\mbd{\delta};X)$. Define $\Omega(X)$ to be the box $A_{1}\times A_{2}\times\cdots\times A_{k-i_l},$ where
\begin{equation*}
    A_n:=A_n(\boldsymbol{\delta})=\{\theta_{n}\in [0,1): \|\delta_{n}'-\theta_{n}\|\leq 1/(4kX^{n})\}. 
\end{equation*}
Then, by [$\ref{ref5}$, Lemma 1], one infers that
\begin{equation}\label{21}
   S^*(\mbd{\delta};X)^{2p}
    \ll\displaystyle (\text{vol}(\Omega(X)))^{-1} \dint_{A_{1}}\dint_{A_{2}}\cdots\dint_{A_{k-i_l}} S^*(\mbd{\theta};X)^{2p}d\mbd{\theta}.
\end{equation}
 Recall the definition $\delta_n$ and the remark following ($\ref{2.172.17}$). Then, we see that $\delta_{k-i_j}$ is a linear combination of $h_{i_j},\ldots,h_{i_l}$. We define the quantity $H_l(\boldsymbol{\theta})$ to be the number of solutions $(h_{i_1},h_{i_2},\ldots,h_{i_l})$ with $|h_{i_j}|\leq sX^{i_j}$ of the system
\begin{equation*}
     \|\delta_{n}'-\theta_{n}\|\leq 1/(4kX^{n})\ \ \ \ (n=k-i_1,k-i_2,\ldots,k-i_l),
\end{equation*}
and put
\begin{equation*}
H_l=\sup_{\boldsymbol{\theta}\in [0,1)^l}H_l(\boldsymbol{\theta}).    
\end{equation*}
Therefore, on substituting ($\ref{21}$) into ($\ref{19}$), and expanding $A_j$ to $[0,1)$ for 
$$j\notin \{k-i_1,k-i_2\ldots,k-i_l\},$$ we obtain the bound
\begin{equation*}
\begin{aligned}
     &\Upsilon_{p}(\mbd{\delta};X)\\
    &\ll\displaystyle(\text{vol}(\Omega(X)))^{-1}\dint_0^1\cdots\dint_0^1\dsum_{|h_{i_1}|\leq sX^{i_1}}\cdots\dsum_{|h_{i_l}|\leq sX^{i_l}} \dint_{A_{k-i_1}}\dint_{A_{k-i_2}}\cdots\dint_{A_{k-i_l}} S^*(\mbd{\theta};X)^{2p}d\mbd{\theta}.\\
    \end{aligned}
\end{equation*}
Since $(\text{vol}(\Omega(X)))^{-1}=X^{1+\cdots+(k-i_l)}$ and by the definition of $H_l$, we infer that
\begin{equation}\label{22}
\begin{aligned}
    \Upsilon_{p}(\mbd{\delta};X)\ll& X^{1+\cdots+(k-i_l)}H_l\dint_0^1\cdots\dint_0^1S^*(\mbd{\theta};X)^{2p}d\mbd{\theta},
\end{aligned}
\end{equation}


To bound $H_l$, we first analyse $H_l(\boldsymbol{\theta})$. Recall again the definition $\delta_m$ and the remark following ($\ref{2.172.17}$). Then, we have
$$ \delta_{k-i_j}=\alpha_k\binom{k}{i_j}h_{i_j}+\dsum_{n=j+1}^{l}\alpha_{k-i_j+i_n}\binom{k-i_j+i_n}{i_n}h_{i_n},$$
for all $j=1,\ldots, l.$ 
Recall that $\delta'_{k-i_j}=\delta_{k-i_j}+\Gamma $ for $k-i_j= 1$, and $\delta'_{k-i_j}=\delta_{k-i_j},$ otherwise. Meanwhile, by [$\ref{ref5}$, Lemma 3],  when $m\in \N$, $\alpha,\beta\in \R$ and $|\alpha-a/q|\leq q^{-2}$, the number of solutions of
$$\|m\alpha x+\beta\|\leq 1/Y,$$
with $|x|\leq X$, is at most $(1+4q/Y)(1+4mX/q).$ Put $\alpha=\alpha_k$ with $|\alpha_k-a/q|\leq q^{-2}$, $m=\binom{k}{i_j},$ $X=sX^{i_j}$, $Y=4kX^{k-i_j}.$  Then, for fixed $h_{i_{j+1}},\ldots, h_{i_l}$, the number of $h_{i_j}$ of 
$$\|\delta_{k-i_j}'-\theta_{k-i_j}\|\leq 1/(4kX^{k-i_j}),$$
with $|h_{i_j}|\leq sX^{i_j},$ is at most $\ll X^{i_j}(q^{-1}+X^{-i_j}+X^{-(k-i_j)}+qX^{-k}).$
If we proceed this in descending order $j=l,l-1,\ldots,1$, we infer that 
\begin{equation}\label{27}
    H_l(\boldsymbol{\theta})\ll X^{i_1+i_2+\cdots+i_l}\displaystyle\prod_{j=1}^l\left(q^{-1}+X^{-i_j}+{X^{-k+i_j}}+qX^{-k}\right).
\end{equation}
By taking supremum over $\boldsymbol{\theta},$ we may replace $H_l(\boldsymbol{\theta})$ with $H_l$ in $(\ref{27}).$
For concision, we write 
\begin{equation}\label{2.242.24}
    R_l=\displaystyle\prod_{j=1}^l\left(q^{-1}+X^{-i_j}+{X^{-k+i_j}}+qX^{-k}\right)
\end{equation}
Therefore, from ($\ref{22}$) and ($\ref{27}$), one has by applying the Carleson-Hunt theorem [$\ref{ref24}$]
\begin{align*} 
\Upsilon_{p}(\mbd{\delta};X)&\ll X^{1+\cdots+(k-i_l)}H_l\doint S^*(\mbd{\theta};X)^{2p}d\mbd{\theta}\\
&\ll X^{i_1+i_2+\cdots+i_l}R_lX^{1+\cdots+(k-i_l)}\doint S(\mbd{\theta};X)^{2p}d\mbd{\theta}.
\end{align*}
 Hence, by Vinogradov's mean value theorem, the last expression is 
$O(X^{(2p+\epsilon)}X^{i_1+i_2+\cdots+i_l}R_l).$
Consequently, by ($\ref{2.18}$), we see that
 \begin{equation}\label{28}
     \Xi(\alpha_k,\mbd{\alpha})\ll X^{i_1+i_2+\cdots+i_l+\epsilon}R_l^{1/(2p)}.
 \end{equation}
 On recalling the definition $R_l$, we complete the proof of Lemma 2.3.

\end{proof}

\subsection{Proof of Theorem 1.3}

\begin{proof}
We combine all lemmas in section 2.1 to prove Theorem 1.3. On recalling ($\ref{2.242.24}$) and $2p=(k-i_l)(k-i_l+1)$, by Lemma 2.2 and Lemma 2.3, we have 
\begin{equation}\label{2.272.27}
    \mathcal{I}(\alpha_k)\ll X^{i_1+\cdots+i_l+\epsilon}R_l^{1/(2p)}\doint \doint \mathcal{F}_0(\alpha_k,\mbd{\alpha};\mbd{\gamma})\tilde{K}(\mbd{\gamma})d\mbd{\alpha}d\mbd{\gamma}.
\end{equation}

Meanwhile, by applying the H\"older's inequality and a change of variable, one sees that 
\begin{equation}\label{2.282.28}
\doint \mathcal{F}_0(\alpha_k,\mbd{\alpha};\mbd{\gamma})d\boldsymbol{\alpha}\leq \sup_{\gamma\in [0,1)}\doint |f_0(\alpha_k,\mbd{\alpha};\gamma)|^{2s}d\boldsymbol{\alpha}=\doint |f(\alpha_k,\mbd{\alpha})|^{2s}d\boldsymbol{\alpha}.    
\end{equation}
Furthermore, on recalling $(\ref{2.62.6}),$ we find that 
$$
\dint_0^1|K(\gamma)|d\gamma\leq \dint_0^1 \textrm{min}\{X,\|\gamma\|^{-1}\}d\gamma\ll \log X,$$
and hence
\begin{equation}\label{2.292.29}
\doint|\tilde{K}(\mbd{\gamma})|d\mbd{\gamma}\ll (\log X)^{2s}.
\end{equation}

On substituting ($\ref{2.282.28}$) and ($\ref{2.292.29}$) into the right hand side in ($\ref{2.272.27}$), we find that
$$ \mathcal{I}(\alpha_k)\ll X^{i_1+\cdots+i_l+\epsilon}R_l^{1/(2p)}\doint |f(\alpha_k,\mbd{\alpha})|^{2s}d\boldsymbol{\alpha}.$$
Therefore, we conclude from Lemma 2.1 that 
$$\doint|F(\alpha_{k_1},\mbd{\alpha}^{t-1})|^{2s}d\mbd{\alpha}^{t-1}\ll R_l^{1/(2p)} X^{i_1+i_2+\cdots+i_{k-t}+\epsilon}\doint \left|f(\alpha_k,\mbd{\alpha})\right|^{2s}d\mbd{\alpha}.$$
\end{proof}

\section{Proof of Theorem 1.4}

In this section, we provide the proof of Theorem 1.4. In the previous section, we obtained the mean value over all coefficients but the leading coefficient. Thus, Theorem 1.4 follows by integrating over $\alpha_k$ lying on each of major arcs and minor arcs. To be specific, minor arcs estimates in Theorem 1.4 $(\romannumeral2)$ follow immediately from Theorem 1.3 and Diophantine approximation of the leading coefficient. For major arc estimates in Theorem 1.4 $(\romannumeral1)$, we use a consequence of [$\ref{ref14}$, Theorem 14.4] with applications of H\"older's inequality. 

\bigskip

\begin{proof}[Proof of Theorem 1.4]

It follows from ($\ref{28}$) with $2p=(k-i_l)(k-i_l+1)$ that whenever $|\alpha_k-a/q|\leq q^{-2},$ one has
 \begin{equation}\label{30}
 \begin{aligned}
     \Xi(\alpha_k,\mbd{\alpha})&\ll X^{i_1+i_2+\cdots+i_l+\epsilon}\left(\displaystyle\prod_{j=1}^l\left(q^{-1}+X^{-i_j}+{X^{-k+i_j}}+qX^{-k}\right)\right)^{1/(2p)}\\
     &\ll X^{i_1+i_2+\cdots+i_l+\epsilon}\left(q^{-1}+X^{-1}+q{X^{-k}}\right)^{\sigma}, 
 \end{aligned}
 \end{equation}
 where 
 \begin{align*}
\sigma= \frac{l}{(k-i_l)(k-i_l+1)}.
\end{align*}

We first provide estimates for the major arcs. Assume that $\alpha_k\in \mathfrak{M}$. Note that transference principle [$\ref{ref28}$, Theorem 14.1] tells that whenever we have a function $\Psi:\R\rightarrow \C$ with the upper bound
$$\Psi(\alpha)\ll X(q^{-1}+Y^{-1}+qZ^{-1})^{\theta},$$
where $\theta,X,Y,Z$ are positive real numbers, and $a\in \Z$, $q\in \N$ satisfying $(a,q)=1$ and $|\alpha-a/q|\leq q^{-2}$, then we deduce that $$\Psi(\alpha)\ll X(\lambda^{-1}+Y^{-1}+\lambda Z^{-1})^{\theta},$$
with $\lambda=r+Z|r\alpha-b|$, and $b\in \Z$, $r\in \N$ satisfying $(b,r)=1.$
Therefore, 
 one infers from $(\ref{30})$ that whenever $b\in \Z$ and $r\in\N$ satisfy $(b,r)=1$ and $|\alpha_k-b/r|\leq r^{-2}$, then it follows that
 \begin{equation*}
     \Xi(\alpha_k,\mbd{\alpha})\ll X^{i_1+i_2+\cdots+i_l+\epsilon}(\lambda^{-1}+X^{-1}+\lambda X^{-k})^{\sigma},
 \end{equation*}
 where $\lambda=r+X^k|r\alpha_k-b|.$ Moreover, when $\alpha_k\in \mathfrak{M}(r,b)\subseteq\mathfrak{M},$ one has $r\leq X$ and $X^k|r\alpha_k-b|\leq X$, so that $\lambda\leq 2X.$ Therefore, we see from it that 
 one has 
  \begin{equation*}
     \Xi(\alpha_k,\mbd{\alpha})\ll X^{i_1+i_2+\cdots+i_l+\epsilon}\Psi(\alpha_k),
 \end{equation*}
 where $\Psi(\alpha_k)$ is the function taking the value $(q+X^k|q\alpha_k-a|)^{-\sigma},$ when one has $\alpha_k\in \mathfrak{M}(q,a)\subseteq \mathfrak{M},$ otherwise $\Psi(\alpha_k)=0.$
Hence, one has
\begin{equation}\label{eq373737}
\begin{aligned}
    \dint_{\mathfrak{M}}\doint |f(\alpha_k,\mbd{\alpha})|^{2s} \Xi(\alpha_k,\mbd{\alpha})d\mbd{\alpha}d\alpha_k\ll X^{i_1+\cdots+i_l+\epsilon}\dint_{\mathfrak{M}}\doint |f(\alpha_k,\mbd{\alpha})|^{2s} \Psi(\alpha_k)d\mbd{\alpha}d\alpha_k.
\end{aligned}    
\end{equation}

Let us first assume that $2s\geq k(k+1).$ Then, since $\Psi(\alpha_k)\leq 1$, one finds that by Vinogradov's mean value theorem
\begin{equation}\label{32'}
    \dint_{\mathfrak{M}}\doint |f(\alpha_k,\mbd{\alpha})|^{2s} \Psi(\alpha_k)d\mbd{\alpha}d\alpha_k\ll \dint_{0}^1\doint |f(\alpha_k,\mbd{\alpha})|^{2s} d\mbd{\alpha}d\alpha_k\ll X^{2s-k(k+1)/2+\epsilon}.
\end{equation}

Next, let us assume that $k^2+(1-2\sigma)k+2\sigma\leq 2s< k(k+1)$.
By applying H\"older's inequality, one obtains that
\begin{equation}\label{32}
\begin{aligned}
   & \dint_{\mathfrak{M}}\doint |f(\alpha_k,\mbd{\alpha})|^{2s} \Psi(\alpha_k)d\mbd{\alpha}d\alpha_k\\
  &  \ll \left(\dint_{\mathfrak{M}}\doint |f(\alpha_k,\mbd{\alpha})|^{2s_0} \Psi(\alpha_k)^{\frac{1}{\sigma}}d\mbd{\alpha}d\alpha_k\right)^{\sigma}\left(\dint_{\mathfrak{M}}\doint |f(\alpha_k,\mbd{\alpha})|^{k(k+1)} d\mbd{\alpha}d\alpha_k\right)^{1-\sigma},
\end{aligned}
\end{equation}
with $s_0=(2s-k(k+1)(1-\sigma))/(2\sigma)$. Notice from the range of $2s$ that $k(k-1)\leq 2s_0\leq k(k+1)$.

As a consequence of [$\ref{ref6}$, Lemma 2], one finds that when $2s_0$ is an even number
\begin{equation}\label{3.63.6}
    \dint_{\mathfrak{M}}\doint |f(\alpha_k,\mbd{\alpha})|^{2s_0} \Psi(\alpha_k)^{\frac{1}{\sigma}}d\mbd{\alpha}d\alpha_k\ll X^{\epsilon-k}(XI_1+I_2),
\end{equation}
where 
\begin{equation*}
    I_1=\dint_0^1\doint|f(\alpha_k,\mbd{\alpha})|^{2s_0}d\mbd{\alpha}d\alpha_k,\ \textrm{and}\  I_2=\doint |f(0,\mbd{\alpha})|^{2s_0}d\mbd{\alpha}.
\end{equation*}
By Vinogradov's mean value theorem, whenever $k(k-1)\leq 2s_0\leq k(k+1),$ we have $I_1\ll X^{s_0+\epsilon}.$
On the other hands, when $2s_0\geq k(k-1),$ we have $  I_2\ll X^{2s_0-k(k-1)/2+\epsilon}.$
Thus, for all even numbers $2s_0$ with $k(k-1)\leq 2s_0\leq k(k+1)$, we find from ($\ref{3.63.6}$) that
\begin{equation}\label{33}
\begin{aligned}
    \dint_{\mathfrak{M}}\doint |f(\alpha_k,\mbd{\alpha})|^{2s_0} \Psi(\alpha_k)^{\frac{1}{\sigma}}d\mbd{\alpha}d\alpha_k\ll X^{s_0-k+1+\epsilon}+X^{2s_0-k(k+1)/2+\epsilon}.
\end{aligned}
\end{equation}
Notice here that the situation that two terms of the bound in ($\ref{33}$) are same occurs when $2s_0=k^2-k+2$, which is an even number. Thus, by interpolation between even numbers $2s_0$, one finds that ($\ref{33}$) also holds for any real numbers $2s_0$ between $k(k-1)$ and $k(k+1).$ On substituting ($\ref{33}$) into ($\ref{32}$) and applying Vinogradov's mean value theorem, one has
\begin{equation*}
\begin{aligned}
     \dint_{\mathfrak{M}}\doint |f(\alpha_k,\mbd{\alpha})|^{2s} \Psi(\alpha_k)d\mbd{\alpha}d\alpha_k&\ll \left(X^{s_0-k+1+\epsilon}+X^{2s_0-k(k+1)/2+\epsilon}\right)^{\sigma}(X^{k(k+1)/2})^{1-\sigma}.
\end{aligned}
\end{equation*}
Since we have $2s_0\sigma+k(k+1)(1-\sigma)=2s,$ this bound is seen to be 
$$ X^{s-\sigma(k-1)+\epsilon}+X^{2s-k(k+1)/2+\epsilon}.$$
Furthermore, since $2s\geq k^2+(1-2\sigma)k+2\sigma,$ this bound can be replaced by $O(X^{2s-k(k+1)/2+\epsilon}).$
Thus, one concludes that
whenever $ k^2+(1-2\sigma)k+2\sigma\leq 2s<k(k+1)$
\begin{equation}\label{36'}
    \dint_{\mathfrak{M}}\doint |f(\alpha_k,\mbd{\alpha})|^{2s} \Psi(\alpha_k)d\mbd{\alpha}d\alpha_k\ll X^{2s-k(k+1)/2+\epsilon}.
\end{equation}

Thus, by ($\ref{eq373737}$), ($\ref{32'}$) and ($\ref{36'}$), whenever $2s\geq k^2+(1-2\sigma)k+2\sigma$ we find that
\begin{equation*}
    \dint_{\mathfrak{M}}\doint |f(\alpha_k,\mbd{\alpha})|^{2s} \Xi(\alpha_k,\mbd{\alpha})d\mbd{\alpha}d\alpha_k\ll X^{i_1+\cdots+i_l+\epsilon}X^{2s-k(k+1)/2+\epsilon}.
\end{equation*}
Then, on recalling the definition of $\mathcal{F}_0(\alpha_k,\mbd{\alpha};\mbd{\gamma})$, it follows from H\"older's inequality and a change of variable that 
\begin{equation*}
\dint_{\mathfrak{M}} \doint \mathcal{F}_0(\alpha_k,\mbd{\alpha};\mbd{\gamma})\Xi(\alpha_k,\mbd{\alpha})d\mbd{\alpha}d\alpha_k\ll X^{i_1+\cdots+i_l+\epsilon}X^{2s-k(k+1)/2+\epsilon}.
\end{equation*}
Consequently, combining this with Lemma 2.1 and Lemma 2.2, we deduce that
\begin{align*}
   &\dint_{\mathfrak{M}}\doint\left|F(\alpha_{k_1},\mbd{\alpha}^{t-1})\right|^{2s}d\mbd{\alpha}^{t-1}d\alpha_{k_1}\\
   &\ll X^{i_{l+1}+\cdots+i_{k-t}}\dint_{\mathfrak{M}}\mathcal{I}(\alpha_k)d\alpha_k\\
    &\ll X^{i_{l+1}+\cdots+i_{k-t}}\dint_{\mathfrak{M}}\doint\doint\mathcal{F}_0(\alpha_k,\mbd{\alpha};\mbd{\gamma})\Xi(\alpha_k,\mbd{\alpha})\tilde{K}(\boldsymbol{\gamma})d\mbd{\alpha}d\boldsymbol{\gamma}d\alpha_k\\
    &\ll X^{2s-D+\epsilon},
\end{align*}
where we have used $(\ref{2.292.29})$.

 Next, we provide estimates for the minor arcs. When $\alpha_k\in \mathfrak{m}$, there exists $q$ and $a$ with $(q,a)=1$ such that $|\alpha_k-a/q|\leq (2k)^{-1}q^{-1}X^{-k+1}$ with $X<q<X^{k-1}.$ Thus, on recalling ($\ref{30}$), when $\alpha_k\in \mathfrak{m},$ we deduce that $\Xi(\alpha_k,\mbd{\alpha})\ll X^{i_1+\cdots+i_{k-t}-\sigma+\epsilon}.$
 Therefore, by applying Theorem 1.3 together with Vinogradov's mean value theorem, whenever $2s\geq k_1(k_1+1)$ one has 
\begin{align*}
    \dint_{\mathfrak{m}}\doint\left|F(\alpha_{k_1},\mbd{\alpha}^{t-1})\right|^{2s}d\mbd{\alpha}d\alpha_{k_1}&\ll X^{i_1+\cdots+i_{k-t}-\sigma+\epsilon}\dint_{0}^1\doint\left|f(\alpha_k,\mbd{\alpha})\right|^{2s}d\mbd{\alpha}d\alpha_{k_1}\\
     &\ll X^{2s-D-\sigma+\epsilon}.
\end{align*}
Therefore, by taking $l$ that maximizes the exponent $\sigma,$ the conclusion of Theorem 1.4 follows. 
\end{proof}

\bigskip

\section{Proof of Theorem 1.1}

\bigskip
In this section, we provide Theorem 4.1, which is more quantitative than Theorem 1.1. It is worth noting that Theorem 1.1 immediately follows from Theorem 4.1.

The main ingredients of the proof in this section are the arguments in [$\ref{ref13}$, Theorem 1.3]. Wooley [$\ref{ref13}$, Theorem 1.3] provided upper bounds for exponential sums by bounding the pointwise estimates by mean value estimates over major and minor arcs. Meanwhile, a classical way widely used in studying fractional parts of polynomial is closely related to the upper bounds of associated exponential sum. Thus, we exploit the argument in [$\ref{ref13}$] to obtain upper bounds of associated exponential sums in terms of mean values of exponential sums. Thus, upper bounds for these mean values of exponential sums deliver the conclusion of Theorem 4.1.  

\bigskip

\begin{te}
Let $\epsilon>0$ and $s,k$ be natural numbers with $k\geq 6.$ Suppose that $X$ is sufficiently large in terms of $s,k$ and $\epsilon$. Consider $\alpha_i\in\R$ with $1\leq i\leq s.$ Then, for $s\geq k+2$ one has
 $$\min_{\substack{0\leq \boldsymbol{x}\leq X\\ \boldsymbol{x}\neq \boldsymbol{0}}}\|\alpha_1x_1^k+\alpha_2x_2^k+\cdots+\alpha_sx_s^k\|\leq X^{-\sigma(s,k)+\epsilon},$$
 where 
 \begin{equation*}
 \sigma(s,k)=\min\biggl\{\frac{s}{k(k+1)-s}, 1\biggr\}.
 \end{equation*}
 \end{te}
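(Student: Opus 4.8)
The plan is to bound the relevant exponential sum pointwise by a mean value, then invoke the mean value estimates from Theorem 1.4 (or rather their specialization to the single-exponent case $t=1$, $k_1=k$, which is classical Vinogradov territory) together with the major/minor arc dissection. Concretely, set $g(\alpha)=\sum_{1\leq x\leq X}e(\alpha x^k)$ and, following Wooley [\ref{ref13}, Theorem 1.3], write the number $N(\alpha)$ of solutions $\boldsymbol{x}$ with $0\leq \boldsymbol{x}\leq X$ to $\|\alpha_1x_1^k+\cdots+\alpha_sx_s^k\|\leq X^{-\sigma+\epsilon}$ (for a putative $\sigma$) via a Fourier expansion of the indicator of the interval $[-X^{-\sigma+\epsilon},X^{-\sigma+\epsilon}]$, so that
\begin{equation*}
N(\alpha)\gg X^{s-\sigma+\epsilon}-\sum_{1\leq|h|\leq H}\frac{1}{|h|}\prod_{i=1}^{s}\Bigl|\sum_{0\leq x\leq X}e(h\alpha_i x^k)\Bigr|,
\end{equation*}
with $H\asymp X^{\sigma-\epsilon}\cdot(\text{something})$; if the main term dominates we get a solution, in particular a nonzero one after checking that $\boldsymbol{x}=\boldsymbol{0}$ contributes only $O(1)$.

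First I would make this precise: the standard Vinogradov/Davenport trick gives a bound for $N(\alpha)$ from below of the form $N(\alpha)\geq 2\delta(X+1)^s + O(E)$ where $\delta=X^{-\sigma+\epsilon}$ and $E$ is the error sum above, so the task reduces to showing $E=o(\delta X^s)$. I would then estimate $E$ by opening one factor trivially is wasteful; instead I would use Hölder's inequality to pass to the $2s$-th moment of $g$, i.e. bound $\prod_{i}|g(h\alpha_i)|$ crudely and average, OR — and this is the more efficient route suggested by the phrasing ``we exploit the argument in [\ref{ref13}]'' — bound the individual sums $g(h\alpha_i)$ using a Weyl-type/minor-arc estimate that itself comes from a mean value. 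The key quantitative input is that, by Vinogradov's mean value theorem (the resolved main conjecture), $\int_0^1|g(\alpha)|^{2s}\,d\alpha\ll X^{2s-k+\epsilon}$ once $2s\geq k(k+1)$, and more relevantly the estimate for $\int_0^1 |g(\alpha)|^{2s} d\alpha$ with the major arc contribution separated, which for $k(k-1)\leq 2s\leq k(k+1)$ gives $\ll X^{s+\epsilon} + X^{2s-k(k+1)/2+\epsilon}$, matching the shape $X^{s+\epsilon}$ when $2s\leq k(k+1)$. Combining a pointwise supremum bound on minor arcs with an $L^1$ bound of $|g|$ on major arcs, one expects $E\ll X^{s+\epsilon}\cdot H^{\epsilon}$ roughly, so that the condition $\delta X^s\gg E$ becomes $X^{-\sigma}X^s\gg X^{s}\cdot X^{?}$, and tracking the exponents carefully yields exactly $\sigma(s,k)=s/(k(k+1)-s)$ in the range $s<k(k+1)/2$ and $\sigma=1$ once $s\geq k(k+1)/2$, with the crossover at $s=k(k+1)/2$. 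The condition $s\geq k+2$ and $k\geq 6$ should enter to guarantee that the relevant moment $2s$ lands in the admissible range for the mean value estimates after the Hölder interpolation, and to ensure $\sigma(s,k)>0$ gives a genuine improvement.

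The main obstacle I expect is the efficient estimation of the error sum $E=\sum_{1\leq |h|\leq H}|h|^{-1}\prod_i|g(h\alpha_i)|$, specifically handling the interplay between the sum over $h$ and the Diophantine properties of the $\alpha_i$: for a fixed $i$ one cannot assume all $h\alpha_i$ simultaneously lie on minor arcs, so one must argue — as in Wooley's treatment — by either a dyadic decomposition over $h$ combined with a ``large sieve''-type or mean-square-over-$h$ estimate $\sum_{h\leq H}|g(h\alpha)|^{2s}\ll (\text{major arc term})+(\text{minor})$, or by a clever use of Hölder to reduce to the single integral $\int_0^1|g(\alpha)|^{2s}d\alpha$ at the cost of a factor $H$ which must be absorbed. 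Getting the exponent to come out as precisely $s/(k(k+1)-s)$ rather than something weaker requires using the \emph{sharp} form of the mean value estimate (the $X^{s+\epsilon}$ bound valid up to $2s=k(k+1)$, which is where the condition $2s\leq k(k+1)$ — equivalently the regime where $\sigma<1$ — comes from), so the delicate point is the bookkeeping of which moment to interpolate to and verifying that the even-integer endpoints $2s_0\in\{k(k-1),\dots,k(k+1)\}$ used in [\ref{ref6}, Lemma 2] and Hölder interpolation cover the needed range for every $s\geq k+2$. Once the error term is shown to be $O(X^{s-\sigma+\epsilon}/\log X)$ or similar with the stated $\sigma$, the conclusion follows by taking $X$ large and noting the zero tuple is negligible.
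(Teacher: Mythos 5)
Your overall framework is on the right track, but there is a genuine gap at precisely the point you flag as ``the main obstacle,'' and you do not resolve it. The paper's starting point is slightly different from yours: instead of a Fourier/Davenport expansion of an interval indicator, it uses the classical fractional-parts lemma (Lemma 4.2), which says that if $\|\alpha_1 x_1^k+\cdots+\alpha_s x_s^k\|>H^{-1}$ for all $0\leq\mbd x\leq X$, $\mbd x\neq\mbd 0$, then $\sum_{1\leq h\leq H}\prod_i|g(h\alpha_i)|\gg X^s$. This is cleaner than the $\sum|h|^{-1}\prod_i|g(h\alpha_i)|$ error you write down (no $1/|h|$ weights to carry around), but the two reductions are morally equivalent; the real work is the same.

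The heart of the proof — which you identify but do not carry out — is to bound $\sum_{h\leq H}\prod_i|g(h\alpha_i)|$ using the arc decomposition. The paper does this via three specific ingredients you are missing. First, a \emph{preliminary manoeuvre}: under the contradiction hypothesis, each $\alpha_j$ must lie in $\mathfrak{m}^H$ (otherwise taking $x_j=q$, $x_i=0$ gives a small fractional part directly), whence by Dirichlet any rational approximation $|q\alpha_j-a|\leq H^{-1}X^{1-k}$ necessarily has $q>X$; this is what makes the factor $H(q^{-1}+H^{-1}+qH^{-1}X^{-k})\ll 1+H/X$ in Proposition 4.4 harmless. Second, Proposition 4.4 itself converts $\sum_{h\in\mathcal D_1}|g(h\alpha)|^{k+1}$ and $\sum_{h\in\mathcal D_2}|g(h\alpha)|^{k(k+1)}$ into $M(H)\cdot\int_{\mathfrak M_1}|g|^{k+1}$ and $M(H)\cdot\int_{\mathfrak m_4}|g|^{k(k+1)}$ via the Gallagher--Sobolev inequality (Lemma 4.3) plus a counting lemma $M(H)\ll H(q^{-1}+H^{-1}+qH^{-1}X^{-k})$ of Heath-Brown; the exponents are chosen to be precisely $k+1$ on major arcs (matching Vaughan's $\int_{\mathfrak M}|g|^{k+1}\ll X^{1+\epsilon}$) and $k(k+1)$ on minor arcs (matching Wooley's $\int_{\mathfrak m}|g|^{k(k+1)}\ll X^{k(k+1)-k-1+\epsilon}$), not the $X^{s+\epsilon}$ bound for $k(k-1)\leq 2s\leq k(k+1)$ that you quote (that one is the right input for Theorem 1.4/Theorem 1.2, not here). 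Third, the set $[1,H]$ is split into $2^s$ pieces $H_1,\ldots,H_{2^s}$ according to the $\mathfrak M/\mathfrak m$ status of all $h\alpha_j$ simultaneously, and on each piece a Hölder balance with $m$ major-arc factors at exponent $k+1$ and $m_1=\min\{k(k+1-m),\,s-m\}$ minor-arc factors at exponent $k(k+1)$ (the remaining factors bounded trivially or by the Weyl saving $X^{1-\delta_1}$) is what produces $\sigma(s,k)=s/(k(k+1)-s)$ after the bookkeeping. You list a dyadic decomposition, a large-sieve estimate, and a Hölder reduction as three candidate ways to handle the $h$-sum and explicitly leave the choice open; this means the proposal does not actually contain a proof — the specific mechanism (Gallagher--Sobolev plus $M(H)$ plus the $2^s$-way split plus the balanced Hölder) is the content of the theorem, and none of your three alternatives, as stated, delivers it.
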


\begin{proof}[Proof of Theorem 1.1]
Note that whenever $s\geq k(k+1)/2$ the exponent $\sigma(s,k)$ in Theorem 4.1 becomes $1$. Therefore, Theorem 1.1 immediately follows from Theorem 4.1.
\end{proof}

\subsection{Outline of the proof of Theorem 4.1}

 We provide outline of the proof of Theorem 4.1. We begin with stating a classical lemma from the theory of fractional parts of polynomials [$\ref{ref2}$, Theorem 2.2], which relates fractional parts of a sequence of real numbers to the associated exponential sum.
 
\begin{llll}
Let $x_1,\ldots,x_N$ be real numbers. Suppose that $\|x_n\|\geq H^{-1}$ for every $n$ with $1\leq n\leq N$. Then, 
\begin{equation*}
    \dsum_{1\leq h\leq H}\bigl|\dsum_{n=1}^Ne(hx_n)\bigr|\gg N.
\end{equation*}
\end{llll}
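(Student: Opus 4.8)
The plan is to read the sums $S(h):=\sum_{n=1}^{N}e(hx_n)$ as Fourier coefficients of the counting measure $\mu=\sum_{n=1}^{N}\delta_{x_n}$ on $\mathbb R/\mathbb Z$, and to use the hypothesis in the equivalent form $\mu(\mathfrak a)=0$, where $\mathfrak a=\{t:\|t\|<H^{-1}\}$ is an arc of length $2/H$. So first I would fix a real trigonometric polynomial $P$ of degree at most $H$ which minorises $\mathbf 1_{\mathfrak a}$ and has strictly positive mean. The Beurling–Selberg–Vaaler extremal minorant of an arc does exactly this (an explicit Fejér–kernel construction works equally well, and is what classical references use): it satisfies $P\le \mathbf 1_{\mathfrak a}$, is supported in frequencies $|h|\le H$, has $\widehat P(0)=|\mathfrak a|-(H+1)^{-1}$, and — since $\mathbf 1_{\mathfrak a}-P\ge 0$ has integral $(H+1)^{-1}$ — obeys $\int_0^1|P|\le|\mathfrak a|+(H+1)^{-1}$, hence $|\widehat P(h)|\le |\mathfrak a|+(H+1)^{-1}$ for every $h$. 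Because $|\mathfrak a|=2/H\ge(H+1)^{-1}$, this yields $\widehat P(0)\gg 1/H$ and $\max_h|\widehat P(h)|\ll 1/H$, with comparable implied constants.

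The second step is just to sum $P$ over the sample points. Since $P\le\mathbf 1_{\mathfrak a}$ pointwise and no $x_n$ lies in $\mathfrak a$, one has $\sum_{n=1}^{N}P(x_n)\le 0$. On the other hand, expanding $P$ in its (finite) Fourier series and swapping the two finite sums gives
\[
\sum_{n=1}^{N}P(x_n)=\widehat P(0)\,N+\sum_{1\le|h|\le H}\widehat P(h)\,S(h).
\]
Combining these, and using $|S(-h)|=|S(h)|$, one obtains
\[
\widehat P(0)\,N\le\Bigl|\sum_{1\le|h|\le H}\widehat P(h)S(h)\Bigr|\le 2\,\max_{h}|\widehat P(h)|\sum_{1\le h\le H}|S(h)|,
\]
so that $\sum_{1\le h\le H}|S(h)|\ge \dfrac{\widehat P(0)}{2\max_h|\widehat P(h)|}\,N\gg N$, which is the assertion.

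The substantive point, and the only step requiring more than bookkeeping, is the construction in the first paragraph: one needs a degree-$\le H$ minorant of a $(1/H)$-scale arc whose mean is positive and of the same order of magnitude as the sup-norm of all its Fourier coefficients. This is precisely the Beurling–Selberg extremal problem, and it is what makes the bound $\gg N$ rather than merely $\gg N/H$: a direct application of the Erdős–Turán inequality to the single arc $\mathfrak a$ would lose a factor of $H$ through the harmonic weights $1/h$ in that inequality, whereas here the coefficients $\widehat P(h)$ are all comparable to $\widehat P(0)\asymp 1/H$. Everything else is routine, and the outcome is [ref2, Theorem 2.2].
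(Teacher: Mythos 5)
Your proof is correct. The paper does not supply a proof of this lemma but simply cites it as Theorem~2.2 of Baker's monograph \emph{Diophantine Inequalities}, where it is deduced from the Selberg--Vaaler extremal minorant for an arc (developed earlier in that same chapter), which is exactly the construction you invoke; so your argument coincides with the one the paper is implicitly relying on.
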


 Let $H$ be a positive number with $H\leq X^{1-\nu}$ for sufficiently small $\nu>0$. Suppose that 
\begin{equation}\label{4.14.1}
    \min_{\substack{0\leq \boldsymbol{x}\leq X\\ \boldsymbol{x}\neq \boldsymbol{0}}}\|\alpha_1x_1^k+\alpha_2x_2^k+\cdots+\alpha_sx_s^k\| > H^{-1}.
\end{equation}
Then, by Lemma 4.2, we have
\begin{equation}\label{34}
    \dsum_{1\leq h\leq H}\bigl|\dsum_{1\leq \boldsymbol{x}\leq X}e(h(\alpha_1x_1^k+\alpha_2x_2^k+\cdots+\alpha_sx_s^k))\bigr|\gg X^{s}.
\end{equation}

For concision, here and throughout, we write $[1,H]=[1,H]\cap \Z.$   
Recall the definition ($\ref{eq1.6}$) of $\mathfrak{M}$ and $\mathfrak{m}$. On observing that each real number $h\alpha_j$ lies either on $\mathfrak{M}$ or $\mathfrak{m},$ one can decompose the set $ [1,H]$ into $2^{s}$ sets, $H_1,\ldots,H_{2^s}$, such that the set $\{h\alpha_j|\ h\in H_i\}\subseteq\mathfrak{M}$ or $\{h\alpha_j|\ h\in H_i\}\subseteq\mathfrak{m}$, for all $1\leq j\leq s$ and $1\leq i\leq 2^s.$ Our goal is to show that for every $H_i\ (i=1,\ldots,2^s)$, we have
\begin{equation}\label{35}\dsum_{h\in H_i}\bigl|\dsum_{1\leq \boldsymbol{x}\leq X}e(h(\alpha_1x_1^k+\alpha_2x_2^k+\cdots+\alpha_sx_s^k))\bigr|\ll X^{s-\eta}\ \textrm{for some}\ \eta=\eta(k,\nu)>0,
\end{equation}
which contradicts ($\ref{34}$) for sufficiently large $X$ in terms of $\eta$ and $s$. Thus, this forces us to conclude that for sufficiently large $X$, we have
\begin{equation*}
    \min_{\substack{0\leq \boldsymbol{x}\leq X\\ \boldsymbol{x}\neq \boldsymbol{0}}}\|\alpha_1x_1^k+\alpha_2x_2^k+\cdots+\alpha_sx_s^k\| \leq H^{-1}.
\end{equation*}
Therefore, by letting $\nu\rightarrow 0$, we are done to prove Theorem 4.1.


\bigskip

\subsection{Preliminary manoeuvre}
Under the assumption ($\ref{4.14.1}$), we can obtain extra information about $\alpha_1,\ldots,\alpha_s$. In order to describe this information, we must define $\mathfrak{M}^{H}$ by
$$\mathfrak{M}^{H}=\bigcup_{\substack{0\leq a\leq q \leq X\\(q,a)=1}}\mathfrak{M}^{H}(q,a),$$
where $\mathfrak{M}^H(q,a)=\left\{\alpha\in[0,1):\ \left|q\alpha-a\right|<X^{1-k}H^{-1}\right\}.$ Define $\mathfrak{m}^H$ by $[0,1)\setminus \mathfrak{M}^H$. Note that if there exists $\alpha_j$ contained in $\mathfrak{M}^H$, it follows by putting $x_j=q$ and $x_i=0\ (i\neq j)$ that 
$$\min_{\substack{0\leq \boldsymbol{x}\leq X\\ \boldsymbol{x}\neq \boldsymbol{0}}}\|\alpha_1x_1^k+\alpha_2x_2^k+\cdots+\alpha_sx_s^k\|\leq\min_{1\leq x_j\leq X}\|\alpha_jx_j^k\|\leq \|\alpha_j q^k\|\leq q^{k-1}\|\alpha_jq\|\leq H^{-1},$$
which contradicts ($\ref{4.14.1}$). Hence, from the assumption ($\ref{4.14.1}$), we may assume that all $\alpha_j\ (j=1,\ldots,s)$ are in $\mathfrak{m}^H.$

Furthermore, whenever $\alpha_j\in \mathfrak{m}^H$ with $H\leq X^{1-\nu}$ for sufficiently small $\nu>0,$ one has for all $h\in [1,H]\cap \Z$
\begin{equation}\label{46}
    \dsum_{1\leq x\leq X}e(h\alpha_j x^k)\ll X^{1-\delta_1}
\end{equation}
for some positive number $\delta_1=\delta_1(k,\nu)$. Indeed, suppose that there exists $h\in H$ such that
\begin{equation*}
    \dsum_{1\leq x\leq X}e(h\alpha_j x^k)\geq X^{1-\delta_1}.
\end{equation*}
Then, the Weyl's inequality [$\ref{ref10}$, Lemma 2.4] readily confirms that 
there exist $q\in \N$ and $a\in \Z$ such that $q<X^{\eta}$ and 
$$|h\alpha_j-a/q|\leq q^{-1}X^{\eta-k},$$
where $\eta=\eta(\delta_1).$ This gives
$$|\alpha_j-a/(qh)|\leq (qh)^{-1}X^{\eta-k}.$$
For sufficiently small $\delta_1>0$ so that $\eta=\eta(\delta_1)$ is smaller than $\nu,$ one has $qh<X^{\eta}X^{1-\nu}<X$ and
$$|\alpha_j-a/(qh)|\leq (qh)^{-1}X^{1-k}H^{-1}.$$
This yields that $\alpha_j\in \mathfrak{M}^H$, which contradicts $\alpha_j\in \mathfrak{m}^H.$

\bigskip

\subsection{Lemma and proposition}

To prove ($\ref{35}$), we require arguments used in [$\ref{ref13}$, Theorem 1.3], which relate pointwise estimates of exponential sums to mean value type estimates using the following classical lemma.

\begin{llll}[Gallagher-Sobolev inequality]$\label{lem4.2}$

Let $f\ :\ [a,b]\rightarrow\C$ be continuously differentiable. Then

$$|f(u)|\leq (b-a)^{-1}\dint_a^b|f(x)|dx+\dint_a^b|f'(x)|dx $$
for any $u\in[a,b].$
\end{llll}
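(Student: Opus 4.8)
The plan is to derive the inequality directly from the fundamental theorem of calculus together with a simple averaging step. Since $f$ is continuously differentiable on $[a,b]$, for any pair of points $u,v\in[a,b]$ one has
\[
f(u)=f(v)+\dint_v^u f'(x)\,dx.
\]
Taking absolute values and applying the triangle inequality for integrals (which is valid for complex-valued $f$), the integral term is bounded by $\dint_a^b|f'(x)|\,dx$ irrespective of whether $v\le u$ or $v\ge u$, so that
\[
|f(u)|\le |f(v)|+\dint_a^b|f'(x)|\,dx
\]
for every $v\in[a,b]$.

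The second step is to integrate this bound in $v$ over $[a,b]$ and divide by $b-a$. The left-hand side is unaffected, since it does not depend on $v$; the first term on the right-hand side becomes $(b-a)^{-1}\dint_a^b|f(v)|\,dv$; and the second term is likewise unchanged by the averaging. This yields
\[
|f(u)|\le (b-a)^{-1}\dint_a^b|f(x)|\,dx+\dint_a^b|f'(x)|\,dx,
\]
which is precisely the assertion.

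I do not anticipate any real obstacle: the argument is only a few lines long. The only two points that merit a word of care are that continuous differentiability of $f$ is exactly the hypothesis needed for the fundamental theorem of calculus to apply on every subinterval, and that the triangle inequality $\bigl|\dint_v^u f'(x)\,dx\bigr|\le\dint_a^b|f'(x)|\,dx$ holds uniformly in $u$ and $v$, so that the bound on $|f(u)|$ is genuinely independent of $v$ before we average.
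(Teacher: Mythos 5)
Your proof is correct and is the standard argument for this inequality. The paper itself does not supply a proof; it cites the lemma as a classical fact (the Gallagher--Sobolev inequality) and proceeds to use it directly, so there is no in-paper proof to compare against. Your two-step derivation --- fundamental theorem of calculus on each subinterval $[v,u]$ (or $[u,v]$), triangle inequality to bound the derivative integral by $\dint_a^b|f'(x)|\,dx$ uniformly in $v$, then averaging over $v\in[a,b]$ --- is exactly how this lemma is normally established, and the hypotheses (continuous differentiability, complex-valued $f$) are used correctly.
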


\bigskip
In order to describe the following proposition, we define the sets $\mathcal{D}_1=\mathcal{D}_1(\alpha)$ and $\mathcal{D}_2=\mathcal{D}_2(\alpha)$ with $\alpha\in\R$ by 
\begin{equation*}
    \mathcal{D}_1=\{h\in [1,H]\cap \Z|\ h\alpha\in \mathfrak{M}\ \text{mod}\ 1\}
\end{equation*}
and 
\begin{equation*}
     \mathcal{D}_2=\{h\in [1,H]\cap \Z|\ h\alpha\in \mathfrak{m}\ \text{mod}\ 1\}.
\end{equation*}

\begin{pr}
 Let $\alpha\in \R$, and $H>0.$ Suppose that $|q\alpha-a|\leq q^{-1}$ with $(q,a)=1.$ 
 Then, we have
 \begin{equation}\label{4.34.3}
 \dsum_{h\in \mathcal{D}_1}\bigl|\dsum_{1\leq x\leq X}e(h\alpha x^k)\bigr|^{k+1}\ll  H\left(q^{-1}+H^{-1}+qH^{-1}X^{-k}\right) X^{k+1+\epsilon},
 \end{equation}
 and
 \begin{equation}\label{4.44.4}
\dsum_{h\in \mathcal{D}_2}\bigl|\dsum_{1\leq x\leq X}e(h\alpha x^k)\bigr|^{k(k+1)}\ll  H\left(q^{-1}+H^{-1}+qH^{-1}X^{-k}\right)X^{k(k+1)-1+\epsilon}.    
 \end{equation}
\end{pr}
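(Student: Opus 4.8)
The plan is to establish the two bounds \eqref{4.34.3} and \eqref{4.44.4} by combining the Gallagher--Sobolev inequality of Lemma~\ref{lem4.2} with the mean value machinery of Theorem~1.4, specialized to the case $t=1$, $\mathbf{k}=(k)$, so that $F(\alpha_k,\mbd{\alpha}^{t-1})$ reduces to the plain Weyl sum $\sum_{1\leq x\leq X}e(\alpha x^k)$ and $D=k$, $\sigma=1$ (taking $l=1$, $i_1=\cdots$ irrelevant since $t=1$ forces $i_l$ down to... actually here I would rather use Wooley's classical estimate directly). More precisely, write $g(\alpha)=\sum_{1\leq x\leq X}e(\alpha x^k)$. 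For each $h\in[1,H]\cap\Z$, I would apply Lemma~\ref{lem4.2} to the function $\beta\mapsto g(\beta)^{k+1}$ (respectively $g(\beta)^{k(k+1)}$) on a short interval $I_h$ of length comparable to $H^{-1}X^{1-k}$ centered at $h\alpha$: this is the natural width of a major arc $\mathfrak{M}^H$, and the point is that as $h$ ranges over $[1,H]$ the rescaled arcs $hI_h$ around the fractions $a/q$ tile a major-arc neighbourhood. The derivative of $g(\beta)^{k+1}$ contributes a factor $X^k$ (from differentiating $e(\beta x^k)$ in $x^k\leq X^k$) times $|g(\beta)|^{k}$, and the interval length $H^{-1}X^{1-k}$ times $X^k$ gives $H^{-1}X$, which is $\ll X^{1-\nu}\cdot$ manageable; the averaging-term contributes $(b-a)^{-1}\int = HX^{k-1}\int$.

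The key steps, in order, are: (1) set up the Gallagher--Sobolev bound so that
$$
\bigl|g(h\alpha)\bigr|^{k+1}\ll HX^{k-1}\dint_{I_h}|g(\beta)|^{k+1}d\beta + X^k\dint_{I_h}|g(\beta)|^k|g'(\beta)|X^{-k}\,d\beta,
$$
where I would further bound $|g'(\beta)|\ll X^{k}|g(\beta)|$-type... more carefully $g'(\beta)=2\pi i\sum x^k e(\beta x^k)$ so $|g'(\beta)|\leq 2\pi X^k|g(\beta)|$ is false — rather one uses a weighted Vinogradov estimate for $\int |g|^{k}|g'|$. A cleaner route: estimate $\int_{I_h}|g'(\beta)|\,d\beta$ by a completed exponential sum and absorb; this is exactly the device in [\ref{ref13}]. (2) Sum over $h\in\mathcal{D}_1$; since the dilated intervals $\{h\beta:\beta\in I_h,\ h\in\mathcal{D}_1\}$ have bounded overlap over a fixed major arc $\mathfrak{M}(q,a)$ and the total major-arc measure carries the arithmetic factor $q^{-1}+H^{-1}+qH^{-1}X^{-k}$, one gets $\sum_{h\in\mathcal{D}_1}|g(h\alpha)|^{k+1}\ll H(q^{-1}+H^{-1}+qH^{-1}X^{-k})X^{k+1+\epsilon}$ after inserting the Vinogradov bound $\int_0^1|g(\beta)|^{k+1}d\beta\ll X^{1+\epsilon}$ (the $k+1$-st moment of a degree-$k$ Weyl sum, which is within Vinogradov's range since $k+1\leq k(k+1)/2$ for $k\geq 2$... wait, one needs $2s=k+1$, i.e. the $(k+1)$-st moment, and the critical exponent for a single variable with only the top coefficient free is handled by Hua-type/Vinogradov). (3) Repeat verbatim for $\mathcal{D}_2$ with the $k(k+1)$-th power, using the minor-arc-compatible moment $\int_0^1|g(\beta)|^{k(k+1)}d\beta\ll X^{k(k+1)-k(k+1)/2+\epsilon}$... actually one wants $X^{k(k+1)-1+\epsilon}$ on the right after the extra factor, so the relevant full-range mean value is the $\tfrac{k(k+1)}{2}$-th moment being $X^{k(k+1)/2-k(k-1)/2+\epsilon}=X^{k+\epsilon}$, combined with Weyl savings on $\mathfrak m$; the bookkeeping is where $2s=k(k+1)$ is forced.

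I expect the main obstacle to be the overlap/tiling bookkeeping in step~(2)--(3): one must show that the union over $h\in\mathcal{D}_1$ of the dilated short intervals $h\cdot I_h$ lands inside the major arcs $\mathfrak{M}$ (this uses the definition of $\mathcal{D}_1$, i.e. $h\alpha\in\mathfrak M\bmod 1$, together with $H\leq X^{1-\nu}$ to control the denominators), and that each point of a given arc $\mathfrak M(q,a)$ is covered by $O(1)$ — or at worst $X^\epsilon$ — values of $h$, so that summing the Gallagher--Sobolev output over $h$ reproduces a single integral over $\mathfrak M$ up to acceptable loss. The arithmetic factor $q^{-1}+H^{-1}+qH^{-1}X^{-k}$ then emerges from estimating $\int_{\mathfrak M}|g(\beta)|^{k+1}d\beta$ via the standard major-arc asymptotics (a consequence of [\ref{ref6}, Lemma 2] / [\ref{ref14}, Theorem 14.4], exactly as used in the proof of Theorem~1.4(i)), which supplies $\ll X^\epsilon(q^{-1}+\cdots)X^{k+1}$ after the appropriate normalization. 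A secondary technical point is controlling $\int_{I_h}|g'(\beta)|\,d\beta$: I would handle it by writing $g'(\beta)=2\pi i\sum_{1\leq x\leq X}x^k e(\beta x^k)$ and applying partial summation to reduce to sums of the shape $g$ itself, so that no genuinely new mean value estimate is needed beyond those already invoked for Theorems~1.3 and~1.4.
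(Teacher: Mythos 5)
Your high-level architecture — Gallagher--Sobolev to convert the pointwise value $|g(h\alpha)|$ into a short integral near $h\alpha$, count the overlap of those short intervals, and insert major/minor-arc moment bounds — is exactly the paper's plan. However, three of the quantitative ingredients you invoke are wrong, and the argument as sketched does not close.

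First, the short interval should have length $\asymp X^{-k}$, not $H^{-1}X^{1-k}$. The paper sets $\Gamma(h)=\{\gamma:\|h\alpha-\gamma\|<(4k)^{-1}X^{-k}\}$; with this choice Gallagher--Sobolev gives $|g(h\alpha)|\ll X^k\int_{\Gamma(h)}|g|\,d\gamma+\int_{\Gamma(h)}|g'|\,d\gamma$, and after raising to the $(k+1)$-th power and applying H\"older with $\mathrm{mes}(\Gamma(h))\asymp X^{-k}$, the prefactor $X^{k(k+1)}\cdot X^{-k^2}=X^k$ is what balances the books. Your wider interval produces a spurious factor of $H$ and also spoils the containment argument showing the dilated intervals land inside the pruned arcs $\mathfrak{M}_1$ (resp.\ $\mathfrak{m}_4$). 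The derivative is indeed handled by partial summation, as you guess.

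Second, the claim $\int_0^1|g(\beta)|^{k+1}d\beta\ll X^{1+\epsilon}$ is false: the $(k+1)$-th full moment of a degree-$k$ Weyl sum is at least of order $X^{\lceil (k+1)/2\rceil}$ from the diagonal, and there is no "Vinogradov bound" giving $X^{1+\epsilon}$ here. What the paper uses is the pruned major-arc estimate $\int_{\mathfrak{M}_1}|g(\gamma)|^{k+1}d\gamma\ll X^{1+\epsilon}$ from [$\ref{ref10}$, Theorem 4.4], and correspondingly $\int_{\mathfrak{m}_4}|g|^{k(k+1)}d\gamma\ll X^{k(k+1)-k-1+\epsilon}$ from [$\ref{ref12}$, Theorem 2.1]. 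These arc-restricted moments carry no $q$-dependence; they are clean.

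Third, and relatedly, you attribute the factor $H(q^{-1}+H^{-1}+qH^{-1}X^{-k})$ to "the total major-arc measure" and claim the overlap is $O(1)$ or $X^{\epsilon}$. Both are wrong. The overlap count $M(H)=\sup_{\gamma}|\{h\in[1,H]:\|h\alpha-\gamma\|<(4k)^{-1}X^{-k}\}|$ is \emph{not} bounded by $O(1)$ in general (for small $q$ many $h$ can land near the same $\gamma$ modulo $1$); it is bounded by $H(q^{-1}+H^{-1}+qH^{-1}X^{-k})$ via Heath-Brown's lemma [$\ref{ref8}$, Lemma 6], and \emph{this} is where the entire arithmetic factor in ($\ref{4.34.3}$) and ($\ref{4.44.4}$) comes from. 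If one pretends the overlap is bounded, the resulting estimate is simply missing the factor and cannot yield the stated bound. Finally, a minor point: Theorem 1.4 with $t=1$ is vacuous (the inner integral over $\boldsymbol{\alpha}^{t-1}$ is empty), and the paper does not invoke it for Proposition 4.4; it uses the two arc-restricted moment estimates cited above, which is the fallback you already gesture towards.
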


By applying Lemma $\ref{lem4.2}$, we shall derive upper bounds for the left hand side in $(\ref{4.34.3})$ and $(\ref{4.44.4})$ in terms of mean values of exponential sums
\begin{equation*}
    \dint_{\mathfrak{M}}\biggl|\dsum_{1\leq x\leq X}e(\alpha x^k)\biggr|^{k+1}d\alpha 
\end{equation*}
and 
\begin{equation*}
    \dint_{\mathfrak{m}}\biggl|\dsum_{1\leq x\leq X}e(\alpha x^k)\biggr|^{k(k+1)}d\alpha.
\end{equation*}
It follows from [$\ref{ref10}$, Theorem 4.4] and [$\ref{ref12}$, Theorem 2.1] that we shall obtain upper bounds for these mean values, and thus we complete the proof of Proposition 4.4. We emphasize here that the choice of exponents $k+1$ and $k(k+1)$ delivers the efficient application of [$\ref{ref10}$, Theorem 4.4] and [$\ref{ref12}$, Theorem 2.1].

\begin{proof}[Proof of Proposition 4.4]

We shall first derive ($\ref{4.34.3}$). Define a set $\Gamma(h)$ to be 
$$\Gamma(h)=\{\gamma\in [0,1)|\ \|h\alpha-\gamma\|<(4k)^{-1}X^{-k}\}.$$
By applying Lemma $\ref{lem4.2}$ to $\sum_{1\leq x\leq X}e(h\alpha x^k)$,
 one has
\begin{equation}\label{37}
    \begin{aligned}
    &\dsum_{h\in \mathcal{D}_1}\bigl|\dsum_{1\leq x\leq X}e(h\alpha x^k)\bigr|^{k+1} \\
    &\ll \dsum_{h\in \mathcal{D}_1}\left(X^k\dint_{\Gamma(h)}\bigl|\dsum_{1\leq x\leq X}e(\gamma x^k)\bigr|d\gamma+\dint_{\Gamma(h)}\bigl|\dsum_{1\leq x\leq X}x^ke(\gamma x^k)\bigr|d\gamma\right)^{k+1}\\
    &\ll  \dsum_{h\in \mathcal{D}_1}\left(X^k\dint_{\Gamma(h)}\bigl|\dsum_{1\leq x\leq X}e(\gamma x^k)\bigr|d\gamma\right)^{k+1}+\dsum_{h\in \mathcal{D}_1}\left(\dint_{\Gamma(h)}\bigl|\dsum_{1\leq x\leq X}x^ke(\gamma x^k)\bigr|d\gamma\right)^{k+1},
    \end{aligned}
\end{equation}
where we used $(A+B)^{k+1}\ll A^{k+1}+B^{k+1}$ for the second inequality. For concision, we write $\Xi_1$ and $\Xi_2$ for the first term and the second term in the bound ($\ref{37}$). Furthermore, for the sake of the next discussion, we freely assume that $X$ is an integer.

We first analyse the sum $\Xi_2.$
By applying partial summation, we have
\begin{equation*}
\begin{aligned}
    &\dsum_{1\leq x\leq X}x^k e(\gamma x^k)=X^kS_{X+1}-S_{1}- \dsum_{2\leq x \leq X}(x^k-(x-1)^k)S_x,
\end{aligned}
\end{equation*}
where $$S_x=\dsum_{x\leq m\leq 2X}e(\gamma m^k).$$ Then, we find that $\Xi_2$ is
\begin{equation}\label{38}
\begin{aligned}
&\ll \dsum_{h\in \mathcal{D}_1}\biggl(\left(X^k\dint_{\Gamma(h)}|S_{X+1}|d\gamma\right)^{k+1}+\left(X^{k-1}\dsum_{2\leq x\leq X}\dint_{\Gamma(h)}|S_x|d\gamma\right)^{k+1}+\left(\dint_{\Gamma(h)}|S_{1}|d\gamma\right)^{k+1}\biggr).
\end{aligned}
\end{equation}
Meanwhile, on noting that mes$(\Gamma(h))\asymp X^{-k}$ and by applying H\"older's inequality, we have
\begin{equation*}
    \left(\dint_{\Gamma(h)}|S_x|d\gamma\right)^{k+1}\leq X^{-k^2}\dint_{\Gamma(h)}|S_x|^{k+1}d\gamma.
\end{equation*}
Thus, we deduce from ($\ref{38}$) that 
\begin{equation}\label{39}
\begin{aligned}
   & \Xi_2\ll X^k \sup_{1\leq x\leq X+1}\dsum_{h\in \mathcal{D}_1}\dint_{\Gamma(h)}|S_x|^{k+1}d\gamma.
 \end{aligned}
 \end{equation}

 Note that if $h\alpha\in\mathfrak{M}$, there exists $q\in\N$ with $1\leq q\leq X$ such that $\|qh\alpha\|\leq (2k)^{-1}X^{1-k}$. Thus, when $\|h\alpha-\gamma\|\leq (4k)^{-1}X^{-k}$ and $h\alpha \in \mathfrak{M}$, one has $\left\|q\gamma\right\|\leq \|qh\alpha\|+\|q(h\alpha-\gamma)\|\leq (2k)^{-1}X^{1-k}+(4k)^{-1}qX^{-k}\leq k^{-1}X^{1-k}$. Thus, on recalling the definition ($\ref{eq1.6}$) of $\mathfrak{M}_l$, one finds that $h\alpha\in \mathfrak{M}$ and $\|h\alpha-\gamma\|<(4k)^{-1}X^{-k}$ implies $\gamma\in \mathfrak{M}_1$. Let us write $$M(H,\gamma)=|\{h\in [1,H]\cap \Z|\ \|h\alpha-\gamma\|<(4k)^{-1}X^{-k}\}|$$ and $$M(H)=\displaystyle\sup_{\gamma\in [0,1)}M(H,\gamma).$$ Hence, by discussion above, we infer from ($\ref{39}$) that 
\begin{equation}\label{40}
\Xi_2\ll X^kM(H)\sup_{1\leq x\leq X+1}\dint_{\mathfrak{M}_1}|S_x|^{k+1}d\gamma.
   \end{equation}
 Meanwhile, by applying [$\ref{ref8}$, Lemma 6], one has
$$M(H)\ll H\left(q^{-1}+H^{-1}+qH^{-1}X^{-k}\right)$$
Furthermore, the Hardy-Littlewood method [$\ref{ref10}$, Theorem 4.4] readily confirms that 
$$\dint_{\mathfrak{M}_1}|S_x|^{k+1}d\gamma\ll X^{1+\epsilon}.$$ Therefore, we see from ($\ref{40}$) that
\begin{equation}\label{41}
\begin{aligned}
&\Xi_2\ll H\left(q^{-1}+H^{-1}+qH^{-1}X^{-k}\right)X^{k+1+\epsilon}.  
\end{aligned}
\end{equation} 

Next, it remains to estimate $\Xi_1$. By applying H\"older's inequality, we deduce that
\begin{equation}\label{42}
\begin{aligned}
   \Xi_1&\ll X^k\dint_{\Gamma(h)}|S_1-S_{X+1}|^{k+1}d\gamma\ll X^k \sup_{1\leq x\leq X+1}\dsum_{h\in \mathcal{D}_1}\dint_{\Gamma(h)} |S_x|^{k+1}d\gamma.
    \end{aligned}
\end{equation}
Then, by the same argument from ($\ref{39}$) to ($\ref{41}$), we have 
\begin{equation}\label{4222}
    \Xi_1\ll H(q^{-1}+H^{-1}+qH^{-1}X^{-k})X^{k+1+\epsilon}.
\end{equation}
Therefore, by $(\ref{37})$, $(\ref{41})$ and $(\ref{4222}),$ we conclude that
\begin{equation}\label{43}
    \dsum_{h\in \mathcal{D}_1}\biggl|\dsum_{1\leq x\leq X}e(h\alpha x^k)\biggr|^{k+1}\ll H\left(q^{-1}+H^{-1}+qH^{-1}X^{-k}\right)X^{k+1+\epsilon}.
\end{equation}
This confirms the estimate $(\ref{4.34.3}).$

We next derive ($\ref{4.44.4}$). Recall the definition $(\ref{eq1.6})$ of $\mathfrak{M}_l$ and $\mathfrak{m}_l=[0,1)\setminus \mathfrak{M}_l$. Note that if $h\alpha\in \mathfrak{m}$ and $\|h\alpha-\gamma\|<(4k)^{-1}X^{-k}$, then $\gamma\in\mathfrak{m}_4$. Indeed, if $\gamma\in \mathfrak{M}_4$, there exists $q\in \N$ with $1\leq q\leq X$ such that $\|q\gamma\|\leq (4k)^{-1}X^{1-k}$, and thus one has
$\|qh\alpha\|\leq \|q(h\alpha-\gamma)\|+\|q\gamma\|\leq q(4k)^{-1}X^{-k}+(4k)^{-1}X^{1-k} \leq (2k)^{-1}X^{1-k} ,$ which contradicts $h\alpha\in \mathfrak{m}.$

Therefore, the same treatment leading from ($\ref{37}$) to ($\ref{42}$) with the exponent $k(k+1)$ in place of $k+1$ gives the upper bound
\begin{equation}\label{44}
    \dsum_{h\in \mathcal{D}_2}\biggl|\dsum_{1\leq x\leq X}e(h\alpha x^k)\biggr|^{k(k+1)}\ll  X^kH\left(q^{-1}+H^{-1}+qH^{-1}X^{-k}\right)\sup_{1\leq x\leq X+1}\dint_{\mathfrak{m}_4}|S_x|^{k(k+1)}d\gamma.
\end{equation}
An application of the argument used in [$\ref{ref12}$, Theorem 2.1] confirms that
\begin{align*}
&\dint_{\mathfrak{m}_4}|S_x|^{k(k+1)}d\gamma\ll X^{k(k+1)-k-1+\epsilon}.
\end{align*}
Thus, on substituting this estimate into ($\ref{44}$), we obtain $(\ref{4.44.4})$. Therefore, we complete the proof of Proposition 4.4.
\end{proof}

\begin{rmk}
Recall from section 4.2 that under the assumption ($\ref{4.14.1}$), we may assume that $\alpha_j\in\mathfrak{m}^H$ with $1\leq j\leq s$. For a given index $j$ with $1\leq j\leq s$, it follows Dirichilet's approximation theorem that there exists $a\in \Z$ and $q\in \N$ with $1\leq q\leq HX^{k-1}$ and $(q,a)=1$ such that $|q\alpha_j-a|\leq H^{-1}X^{1-k}.$ Since $\alpha_j\in \mathfrak{m}^H$, moreover, one has $q>X.$ Thus, Proposition 4.4 with the assumption ($\ref{4.14.1}$) delivers that 
for $1\leq j\leq s$ one has
\begin{equation}\label{45}
    \dsum_{h\in \mathcal{D}_1(\alpha_j)}\biggl|\dsum_{1\leq x\leq X}e(h\alpha_j x^k)\biggr|^{k+1}\ll (1+H/X)X^{k+1+\epsilon},
    \end{equation}
    and
    \begin{equation}\label{4646}
    \dsum_{h\in \mathcal{D}_2(\alpha_j)}\biggl|\dsum_{1\leq x\leq X}e(h\alpha_j x^k)\biggr|^{k(k+1)}\ll (1+H/X)X^{k(k+1)-1+\epsilon}.        
    \end{equation}
\end{rmk}

\bigskip

\subsection{Proof of Theorem 4.1}

\begin{proof}
Let $H=X^{\sigma(s,k)-\nu}$ for sufficiently small $\nu>0.$ Suppose that
\begin{equation}\label{ineq4.18}
    \min_{\substack{0\leq \boldsymbol{x}\leq X\\ \boldsymbol{x}\neq \boldsymbol{0}}}\|\alpha_1x_1^k+\alpha_2x_2^k+\cdots+\alpha_sx_s^k\| > H^{-1}.
\end{equation}
From section 4.1, recall that the sets $H_1,\ldots, H_{2^s}$ are such that the set
$\{h\alpha_j|\ h\in H_i\}\subseteq\mathfrak{M}$ or $\{h\alpha_j|\ h\in H_i\}\subseteq\mathfrak{m},$ for all $1\leq j\leq s$ and $1\leq i\leq 2^s.$
By relabelling $\alpha_i$, we may assume that for $1\leq i\leq m$, the set $\{h\alpha_i|\ h\in H_1\}\subseteq \mathfrak{M}$, and for $m+1\leq i\leq s$, the set $\{h\alpha_i|\ h\in H_1\}\subseteq \mathfrak{m}.$ Note from the explanation following the proof of Proposition 4.4 that we have $(\ref{45})$ and $(\ref{4646})$.

We first consider the case when $m\geq k+1.$ Recall from section 4.2 that the assumption ($\ref{ineq4.18}$) implies that $\alpha_j\in \mathfrak{m}^H$ with $1\leq j\leq s.$ Then, by making use of our hypothesis $s\geq k+2$, together with H\"older's inequality and ($\ref{46}$), we deduce that
\begin{equation}\label{4.184.18}
\begin{aligned}
 &   \dsum_{h\in H_1}\biggl|\dsum_{1\leq \boldsymbol{x}\leq X}e(h(\alpha_1x_1^k+\alpha_2x_2^k+\cdots+\alpha_sx_s^k))\biggr|\\
  & \ll X^{s-(k+1)-\delta_1}\dprod_{1\leq j\leq k+1}\biggl(\dsum_{h\in H_1}\biggl|\dsum_{1\leq x_j\leq X}e(h\alpha_jx_j^k)\biggr|^{k+1}\biggr)^{\frac{1}{k+1}}.
\end{aligned}
\end{equation}
Meanwhile, on recalling the definition of $H_1$ and $\mathcal{D}_1$ following Lemma 4.3, we notice that $H_1\subseteq \mathcal{D}_1(\alpha_j)$ for $1\leq j\leq k+1.$ Then, by applying ($\ref{45}$) with $H\leq X$, it follows from ($\ref{4.184.18}$) that
\begin{equation*}
\begin{aligned}
 &   \dsum_{h\in H_1}\biggl|\dsum_{1\leq \boldsymbol{x}\leq X}e(h(\alpha_1x_1^k+\alpha_2x_2^k+\cdots+\alpha_sx_s^k))\biggr|\\
  &\ll X^{s-(k+1)-\delta_1}\dprod_{1\leq j\leq k+1}\biggl(\dsum_{h\in \mathcal{D}_1(\alpha_j)}\biggl|\dsum_{1\leq x_1\leq X}e(h\alpha_jx_j^k)\biggr|^{k+1}\biggr)^{\frac{1}{k+1}}\ll X^{s-\eta},
\end{aligned}
\end{equation*}
for some $\eta=\eta(\delta_1)>0.$

Next, consider the case when $m<k+1.$ We write 
\begin{equation*}
    A_i=\dsum_{h\in H_1}\bigl|\dsum_{1\leq x_i\leq X}e(h\alpha_i x_i^k)\bigr|^{k+1},\ B_i=\dsum_{h\in H_1}\bigl|\dsum_{1\leq x_i\leq X}e(h\alpha_i x_i^k)\bigr|^{k(k+1)},
\end{equation*}
and put $m_1=\min\{k(k+1-m), s-m\}$. Then it follows from H\"older's inequality that
\begin{equation}\label{ineq48}
    \begin{aligned}
   & \dsum_{h\in H_1}\biggl|\dsum_{1\leq \boldsymbol{x}\leq X}e(h(\alpha_1x_1^k+\alpha_2x_2^k+\cdots+\alpha_sx_s^k))\biggr|\\
    &\ll \left(\dsum_{h\in H_1}1\right)^{1-\frac{km+m_1}{k(k+1)}}A_1^{\frac{1}{k+1}}\cdots A_m^{\frac{1}{k+1}}B_{m+1}^{\frac{1}{k(k+1)}}\cdots B_{m+m_1}^{\frac{1}{k(k+1)}}X^{s-(m+m_1)}.
    \end{aligned}
\end{equation}

 On recalling the definition $H_1$, $\mathcal{D}_1$ and $\mathcal{D}_2$ following Lemma 4.3, notice that $H_1\subseteq \mathcal{D}_1({\alpha_i})$ for $1\leq i\leq m$, and $H_1\subseteq \mathcal{D}_2({\alpha_i})$ for $m+1\leq i\leq m+m_1.$ Thus, for $1\leq i\leq m$ we have 
$$A_i\leq \dsum_{h\in \mathcal{D}_1(\alpha_i)}\bigl|\dsum_{1\leq x_i\leq X}e(h\alpha_i x_i^k)\bigr|^{k+1}$$
and for $m+1\leq i\leq m+m_1$ we have
$$B_i\leq \dsum_{h\in \mathcal{D}_2(\alpha_i)}\bigl|\dsum_{1\leq x_i\leq X}e(h\alpha_i x_i^k)\bigr|^{k(k+1)}.$$
Then, on substituting these inequalities into ($\ref{ineq48}$), it follows by applying ($\ref{45}$) and ($\ref{4646}$) that
\begin{equation}\label{4.194.19}
\begin{aligned}
& \dsum_{h\in H_1}\biggl|\dsum_{1\leq \boldsymbol{x}\leq X}e(h(\alpha_1x_1^k+\alpha_2x_2^k+\cdots+\alpha_sx_s^k))\biggr|\\
& \ll H^{1-\frac{km+m_1}{k(k+1)}}X^{m}X^{m_1-\frac{m_1}{k(k+1)}}X^{s-(m+m_1)}X^{\epsilon}.
      \end{aligned}
\end{equation}
Recall that $H=X^{\sigma(s,k)-\nu}.$ Then, the right hand side in ($\ref{4.194.19}$) is $O(X^{\phi})$ where 
\begin{equation}\label{4.22}
\phi=s+\left(1-\frac{km+m_1}{k(k+1)}\right)(\sigma(s,k)-\nu)-\frac{m_1}{k(k+1)}+\epsilon.    
\end{equation}

We shall show that $\phi\leq s-\eta$ for some $\eta>0$. Recall the definition of $m_1$. When $m\geq\frac{k(k+1)-s}{k-1}$, one has $m_1=k(k+1-m)$. Thus, one has $\phi= s-1+\frac{m}{k+1}+\epsilon< s-\eta$ for some $\eta>0$, since $m< k+1$. When $m< \frac{k(k+1)-s}{k-1}$, one has $m_1=s-m$. In this case, we have $1-\frac{km+m_1}{k(k+1)}>0$, and thus it follows from ($\ref{4.22}$) that
\begin{equation}\label{eq4.22}
    \phi=s+\left(1-\frac{km+m_1}{k(k+1)}\right)\sigma(s,k)-\frac{m_1}{k(k+1)}-\eta,
\end{equation}
for some $\eta=\eta(\nu)$.

First, consider the case $s\geq k(k+1)/2$. Then, it follows from ($\ref{333}$) that $\sigma(s,k)=1$. Hence, since $m_1=s-m,$ it follows from ($\ref{eq4.22}$) that $$\phi= s+\left(1-\frac{(k-2)m+2s}{k(k+1)}\right)-\eta,$$ for some $\eta=\eta(\nu)>0.$ Hence, it follows by $s\geq k(k+1)/2$ and $m\geq 0$ that $\phi\leq s-\eta$ for some $\eta>0.$ Next, recall the hypothesis $s\geq k+2$ in the statement of Theorem 4.1, and consider next the case $k+2\leq s\leq k(k+1)/2.$ Then, it follows from $(\ref{333})$ that $\sigma(s,k)=\frac{s}{k(k+1)-s}.$ Hence, since $m_1=s-m$, it follows from $(\ref{eq4.22})$ that
\begin{equation}
\begin{aligned}
\phi&=s+\biggl(\frac{k(k+1)-s}{k(k+1)}+\frac{-km+m}{k(k+1)}\biggr)\left(\frac{s}{k(k+1)-s}\right)-\frac{s-m}{k(k+1)}-\eta\\
 &=s+\frac{s}{k(k+1)}+\left(\frac{(-km+m)s}{k(k+1)(k(k+1)-s)}\right)-\frac{s-m}{k(k+1)}-\eta\\
 &=s+\frac{m}{k(k+1)}\left(1-\frac{(k-1)s}{k(k+1)-s}\right)-\eta,
\end{aligned}   
\end{equation}
 for some $\eta=\eta(\nu)>0$. Hence, it follows by $s\geq k+2$ and $m\geq 0$ that $\phi\leq s-\eta$ for some $\eta>0.$ Therefore, in all cases, we have
\begin{equation}
\dsum_{h\in H_1}\biggl|\dsum_{1\leq \boldsymbol{x}\leq X}e(h(\alpha_1x_1^k+\alpha_2x_2^k+\cdots+\alpha_sx_s^k))\biggr|\ll X^{s-\eta},
\end{equation}
for some $\eta>0.$ Then, by the same treatment, we have ($\ref{35}$) for every $H_i\ (i=1,\ldots,2^s)$, which contradicts ($\ref{34}$) stemming from ($\ref{ineq4.18}$). Therefore, we are forced to conclude that 
\begin{equation*}
    \min_{\substack{0\leq\boldsymbol{x}\leq X\\ \boldsymbol{x}\neq \boldsymbol{0}}}\|\alpha_1x_1^k+\alpha_2x_2^k+\cdots+\alpha_sx_s^k\| \leq H^{-1}.
\end{equation*}
Hence, by letting $\nu\rightarrow 0$, we complete the proof of Theorem 4.1.
\end{proof}

\section{Proof of Theorem 1.2}

\bigskip

In this section, we provide the proof of Theorem 1.2. We recall the major arcs $\mathfrak{M}=\mathfrak{M}_2$ defined in ($\ref{eq1.6}$), and their complement $\mathfrak{m}=\mathfrak{m}_2.$ In the proof of Theorem 4.1, we used major arcs estimates [$\ref{ref10}$, Theorem 4.4]
\begin{equation}\label{51}
    \dint_{\mathfrak{M}}\biggl|\dsum_{1\leq x\leq X}e(\alpha x^k)\biggr|^{k+1}d\alpha \ll X^{1+\epsilon}
\end{equation}
and minor arcs estimates [$\ref{ref12}$, Theorem 2.1]
\begin{equation}\label{52}
    \dint_{\mathfrak{m}}\biggl|\dsum_{1\leq x\leq X}e(\alpha x^k)\biggr|^{k(k+1)}d\alpha \ll X^{k(k+1)-k-1+\epsilon}.
\end{equation}
To prove Theorem 1.2, we replace the mean values ($\ref{51}$) and ($\ref{52}$) with those in Theorem 1.4, and follow the same argument with the proof of Theorem 4.1.

\subsection{Outline of the proof of Theorem 1.2}

Let $s>k_1^2+k_1+2\lceil\sigma(1-k_1)\rceil$. Throughout this section, we put $H=X^{1-\nu}$ for sufficiently small $\nu>0$ unless specified otherwise. Recall $\varphi_j(x)=\alpha_{1j}x^{k_1}+\cdots+\alpha_{tj}x^{k_t}$. Suppose that 
\begin{equation}\label{ineq5.35.3}
    \min_{\substack{0\leq \boldsymbol{x}\leq X\\\boldsymbol{x}\neq \boldsymbol{0}}}\|\varphi_1(x_1)+\varphi_2(x_2)+\cdots +\varphi_{s}(x_{s})\|> H^{-1}.
\end{equation} Then, by Lemma 4.2, we have
\begin{equation}\label{53}
    \dsum_{1\leq h\leq H}\bigl|\dsum_{1\leq \boldsymbol{x}\leq X}e(h(\varphi_1(x_1)+\cdots+\varphi_{s}(x_{s})))\bigr|\gg X^{s}.
\end{equation}

 On observing that each real number $h\alpha_{1j}$ lies either on $\mathfrak{M}$ or $\mathfrak{m}$, one can decompose the set $[1,H]\cap \Z$ into $2^s$ sets, $H_1,\ldots,H_{2^{s}}$, such that the set $\{h\alpha_{1j}|\ h\in H_i\}\subseteq \mathfrak{M}$ or $\{h\alpha_{1j}|\ h\in H_i\}\subseteq \mathfrak{m}$, for all $1\leq j\leq s$ and $1\leq i\leq 2^s.$ Our goal is to show that for every $H_i\ (i=1,\ldots,2^s)$, we have
\begin{equation}\label{54}
    \dsum_{h\in H_i}\bigl|\dsum_{1\leq \boldsymbol{x}\leq X}e(h(\varphi_1(x_1)+\cdots+\varphi_s(x_{s})))\bigr|\ll X^{s-\eta},
\end{equation}
for some $\eta=\eta(k,\nu)>0$. This contradicts ($\ref{53}$) for sufficiently large $X$ in terms of $\eta$ and $s$. Thus, this forces us to conclude that whenever $s>k_1^2+k_1+2\lceil\sigma(1-k_1)\rceil$ and $X$ is sufficiently large, one has
\begin{equation*}
    \min_{\substack{0\leq \boldsymbol{x}\leq X\\ \boldsymbol{x}\neq \boldsymbol{0}}}\|\varphi_1(x_1)+\varphi_2(x_2)+\cdots +\varphi_{s}(x_{s})\|\leq H^{-1}.
\end{equation*} 
Therefore, by letting $\nu\rightarrow 0$, we are done to prove Theorem 1.2.

\bigskip

\subsection{Preliminary manoeuvre}
As in the previous section, we can obtain extra information about $\alpha_{ij}$ with $1\leq i\leq t, 1\leq j\leq s$, under the assumption ($\ref{ineq5.35.3}$). In order to describe this information, we must define $\widetilde{\mathfrak{M}}_H$ by

\begin{equation*}
    \widetilde{\mathfrak{M}}_H=\bigcup_{\substack{0\leq a_1,\ldots,a_t\leq q\leq X\\(q,a_1,\ldots,a_t)=1}} \widetilde{\mathfrak{M}}_H(q,a_1,\ldots,a_t),
\end{equation*}
where $$\widetilde{\mathfrak{M}}_H(q,a_1,\ldots,a_t)=\{(\alpha_1,\ldots,\alpha_t)\in [0,1)^t|\ |\alpha_i-a_i/q|\leq t^{-1}q^{-1}X^{-k_i+1}H^{-1}\ \text{for}\ 1\leq i\leq t\}.$$ Define $\widetilde{\mathfrak{m}}_H=[0,1)\setminus \widetilde{\mathfrak{M}}_H$.
Note that if there exists $j$ such that $(\alpha_{1j},\ldots,\alpha_{tj})\in \widetilde{\mathfrak{M}}_H$, it follows by putting $x_j=q$ and $x_i=0\ (i\neq j)$ that
\begin{multline*}
    \min_{\substack{0\leq \boldsymbol{x}\leq X\\ \boldsymbol{x}\neq \boldsymbol{0}}}\|\varphi_1(x_1)+\cdots+\varphi_{s}(x_{s})\|\leq \min_{1\leq x_j\leq X}\|\varphi_j(x_j)\|\leq \|\varphi_j(q)\|\\
    \leq q^{k_1-1}\|q\alpha_{1j}\|+q^{k_2-1}\|q\alpha_{2j}\|+\cdots+q^{k_t-1}\|q\alpha_{tj}\|\leq H^{-1},
\end{multline*}
which contradicts ($\ref{ineq5.35.3}$). Hence, under the assumption ($\ref{ineq5.35.3}$), we may assume that $(\alpha_{1j},\ldots,\alpha_{tj})$ is in $\widetilde{\mathfrak{m}}_H$ for every $j=1,\ldots,s.$

Furthermore, whenever $(\alpha_{1j},\alpha_{2j},\ldots,\alpha_{tj})\in \widetilde{\mathfrak{m}}_H$ with $H\leq X^{1-\nu}$ for sufficiently small $\nu>0$, one has for all $h\in [1,H]\cap \Z$
\begin{equation}\label{62}
    \dsum_{1\leq x\leq X}e(h(\alpha_{1j}x^{k_1}+\cdots+\alpha_{tj}x^{k_t}))\ll X^{1-\delta_1}
\end{equation}
for some positive number $\delta_1=\delta_1(k_1,\nu)$. Indeed, suppose that there exists $h\in H$ such that 
\begin{equation*}
    \dsum_{1\leq x\leq X}e(h(\alpha_{1j}x^{k_1}+\cdots+\alpha_{tj}x^{k_t}))\geq X^{1-\delta_1}.
\end{equation*}
Then, by [$\ref{ref2}$, Theorem 4.3] and [$\ref{ref2}$, Lemma 4.6], there exist $q,a_1,\ldots,a_t$ such that $q<X^{\eta}$ and
$$|h\alpha_{ij}-a_i/q|<q^{-1}X^{\eta-k_i}\ (i=1,\ldots,t)$$
where $\eta=\eta(\delta_1,k_1).$ This gives
$$|\alpha_{ij}-a_i/(qh)|<(qh)^{-1}X^{\eta-k_i}\ (i=1,\ldots,t).$$
For sufficiently small $\delta_1$ so that $\eta$ is smaller than $\nu,$ one has $qh<X^{\eta}X^{1-\nu}<X$ and $$|\alpha_{ij}-a_i/(qh)|<(qh)^{-1}X^{1-k_i}H^{-1}\ (i=1,\ldots,t).$$
By dividing the greatest common divisor of $a_i$ and $qh$, this readily confirms that ($\alpha_{1j},\ldots,\alpha_{tj}$) $\in \widetilde{\mathfrak{M}}_H,$ which contradicts  ($\alpha_{1j},\ldots,\alpha_{tj}$) $\in \widetilde{\mathfrak{m}}_H. $


\bigskip

\subsection{Auxiliary proposition}

Recall the definition ($\ref{eq1.41.4}$) of $\sigma$ with $\mathbf{k}=(k_1,\ldots,k_t)$. To show ($\ref{54}$), we require following proposition analogous to Proposition 4.4. In order to describe the following proposition, it is convenient to define $N(H,\boldsymbol{\gamma},\alpha_1,\ldots,\alpha_t)$ with $\boldsymbol{\gamma}\in [0,1)^t$, $(\alpha_1,\ldots,\alpha_t)\in [0,1)^t$ and $H>0$ by 
\begin{equation*}
    N(H,\boldsymbol{\gamma},\alpha_1,\ldots,\alpha_t)=|\{h\in [1,H]\cap \Z|\ \|h\alpha_j-\gamma_j\|<(4k)^{-1}X^{-k_j}\ \textrm{for}\ j=1,\ldots,t\}|,
\end{equation*}
and define $N(H):=N(H,\alpha_1,\ldots,\alpha_t)=\sup_{\boldsymbol{\gamma}\in [0,1)^t}N(H,\boldsymbol{\gamma},\alpha_1,\ldots,\alpha_t).$ We recall the definition $\mathcal{D}_1=\mathcal{D}_1(\alpha)$ and $\mathcal{D}_2=\mathcal{D}_2(\alpha)$ with $\alpha\in \R$, following Lemma 4.3. Furthermore, let us put $L=(k_1^2+k_1)/2+\lceil\sigma(1-k_1)\rceil$.

\begin{pr}
Let $H>0.$ Suppose that $\alpha_j\in \R$ with $t\geq 2$ and $1\leq j\leq t$. Then, we have
\begin{equation}\label{5.75.7}
    \dsum_{h\in \mathcal{D}_1(\alpha_1)}\biggl|\dsum_{1\leq x\leq X}e(h(\alpha_1x^{k_1}+\alpha_2x^{k_2}+\cdots+\alpha_t x^{k_t}))\biggr|^{2L}\ll N(H)X^{2L+\epsilon},
\end{equation}
and
\begin{equation}\label{5.85.8}
        \dsum_{h\in \mathcal{D}_2(\alpha_1)}\biggl|\dsum_{1\leq x\leq X}e(h(\alpha_1x^{k_1}+\alpha_2x^{k_2}+\cdots+\alpha_t x^{k_t}))\biggr|^{k_1(k_1+1)}\ll N(H)X^{k_1(k_1+1)-\sigma+\epsilon}.
\end{equation}
\end{pr}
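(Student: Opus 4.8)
The plan is to follow the architecture of the proof of Proposition~4.4, with the single–variable mean value inputs replaced by the estimates of Theorem~1.4 for the full exponential sum. Write
$g(\mbd{\beta})=\dsum_{1\le x\le X}e(\beta_1x^{k_1}+\cdots+\beta_tx^{k_t})$,
so that the quantities to be bounded are $\dsum_{h\in\mathcal{D}_1(\alpha_1)}|g(h\alpha_1,\ldots,h\alpha_t)|^{2L}$ and $\dsum_{h\in\mathcal{D}_2(\alpha_1)}|g(h\alpha_1,\ldots,h\alpha_t)|^{k_1(k_1+1)}$. For $1\le j\le t$ put $\Gamma_j(h)=\{\gamma\in[0,1):\|h\alpha_j-\gamma\|<(4k)^{-1}X^{-k_j}\}$, a box of length $\asymp X^{-k_j}$, and abbreviate the conclusion of the computation below for $\mathcal{D}_1(\alpha_1)$; the case of $\mathcal{D}_2(\alpha_1)$ with exponent $k_1(k_1+1)$ runs in parallel.

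First I would apply the Gallagher--Sobolev inequality (Lemma~\ref{lem4.2}) successively in the coordinates $\gamma_1,\ldots,\gamma_t$: at the $j$-th stage one differentiates in $\gamma_j$ and removes the weight $x^{k_j}$ by partial summation exactly as in the treatment of $\Xi_2$ in the proof of Proposition~4.4, so that the object under consideration remains an honest exponential sum and the next stage can be applied. Together with H\"older's inequality and $\mathrm{mes}\,\Gamma_j(h)\asymp X^{-k_j}$, each stage contributes one factor $X^{k_j}$ and a sum over $\ll X$ truncations of the range of summation; after all $t$ stages this yields
\begin{equation*}
\dsum_{h\in\mathcal{D}_1(\alpha_1)}\bigl|g(h\alpha_1,\ldots,h\alpha_t)\bigr|^{2L}\ll X^{D-t}\dsum_{h\in\mathcal{D}_1(\alpha_1)}\dsum_{I}\dint_{\Gamma_1(h)}\cdots\dint_{\Gamma_t(h)}\bigl|g_I(\mbd{\gamma})\bigr|^{2L}\,d\mbd{\gamma},
\end{equation*}
where $D=k_1+\cdots+k_t$, $g_I(\mbd{\gamma})=\dsum_{x\in I}e(\gamma_1x^{k_1}+\cdots+\gamma_tx^{k_t})$, and $I$ ranges over $\ll X^{t}$ subintervals of $[1,2X]$. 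Bounding $\dsum_I\le X^{t}\sup_I$ absorbs the $X^{t}$ against $X^{D-t}$ and leaves the factor $X^{D}$, in perfect analogy with the emergence of $X^{k}$ from $\Xi_2$ when $t=1$.

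Next I would use Fubini to rewrite $\dsum_{h}\dint_{\Gamma_1(h)}\cdots\dint_{\Gamma_t(h)}|g_I(\mbd{\gamma})|^{2L}d\mbd{\gamma}=\dint_{[0,1)^t}|g_I(\mbd{\gamma})|^{2L}\,\#\{h\in\mathcal{D}_1(\alpha_1):\mbd{\gamma}\in\prod_j\Gamma_j(h)\}\,d\mbd{\gamma}$. By the definition of $N(H)$ the counting factor is at most $N(H)$, while the conditions $h\alpha_1\in\mathfrak{M}$ and $\|h\alpha_1-\gamma_1\|<(4k)^{-1}X^{-k_1}$ force $\gamma_1\in\mathfrak{M}_1$ (whereas $h\alpha_1\in\mathfrak{m}$ forces $\gamma_1\in\mathfrak{m}_4$), by the same elementary computation used in Proposition~4.4. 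Writing $g_I$ with $I\subseteq[1,2X]$ as a difference of two complete sums $F(\gamma_1,\mbd{\gamma}^{t-1};M)$ with $M\le 2X$, to which the mean value estimates apply uniformly, one is left with
\begin{equation*}
\dsum_{h\in\mathcal{D}_1(\alpha_1)}\bigl|g(h\alpha_1,\ldots,h\alpha_t)\bigr|^{2L}\ll X^{D}N(H)\sup_{1\le M\le 2X}\dint_{\mathfrak{M}_1}\doint\bigl|F(\gamma_1,\mbd{\gamma}^{t-1};M)\bigr|^{2L}\,d\mbd{\gamma}^{t-1}d\gamma_1,
\end{equation*}
and analogously for $\mathcal{D}_2(\alpha_1)$ with $\mathfrak{M}_1,2L$ replaced by $\mathfrak{m}_4,k_1(k_1+1)$. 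Finally I would invoke Theorem~1.4, whose proof (the transference argument for part~(i), and Theorem~1.3 together with Wooley's minor arc estimate for part~(ii)) delivers the same bounds with $\mathfrak{M}_1$ and $\mathfrak{m}_4$ in place of $\mathfrak{M}=\mathfrak{M}_2$ and $\mathfrak{m}=\mathfrak{m}_2$. One checks the hypotheses for the chosen exponents: $2L=k_1(k_1+1)+2\lceil\sigma(1-k_1)\rceil$ is a positive even integer, and since $\lceil\sigma(1-k_1)\rceil\ge\sigma(1-k_1)$ one has $2L\ge k_1^2+(1-2\sigma)k_1+2\sigma$, so Theorem~1.4(i) applies with $s=L$ and gives $\ll X^{2L-D+\epsilon}$; and $2s=k_1(k_1+1)$ meets the hypothesis of Theorem~1.4(ii), which gives $\ll X^{k_1(k_1+1)-D-\sigma+\epsilon}$. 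Multiplying through by $X^{D}N(H)$ produces $(\ref{5.75.7})$ and $(\ref{5.85.8})$.

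The step I expect to be the main obstacle is the first one: organizing the $t$-fold iteration of Gallagher--Sobolev and partial summation so that the power of $X$ that accumulates is precisely $X^{D}$, so that the integrand is reduced to a clean $2L$-th (respectively $k_1(k_1+1)$-th) power of a genuine incomplete exponential sum in all $t$ coordinates simultaneously, and so that the suprema over truncation intervals can be pulled outside the summation over $h$ at the cost of only the permissible factor $X^{t}$. Once this reduction is in place, the remainder is, as in Proposition~4.4, bookkeeping with $N(H)$, the major/minor arc containments, and the mean value estimates of Theorem~1.4.
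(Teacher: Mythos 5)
Your overall strategy matches the paper's: control the pointwise sums by integrals over the boxes $\Gamma_j(h)$, observe that $h\alpha_1\in\mathfrak{M}$ (resp.\ $\mathfrak{m}$) together with $\|h\alpha_1-\gamma_1\|<(4k)^{-1}X^{-k_1}$ forces $\gamma_1\in\mathfrak{M}_1$ (resp.\ $\mathfrak{m}_4$), count repetitions with $N(H)$, and close with Theorem~1.4 after checking that $2L$ and $k_1(k_1+1)$ meet its hypotheses. That part is right, and your replacement of Carleson--Hunt by writing incomplete sums as differences of complete ones is exactly the elementary alternative the paper itself sanctions in Remark~1 (a logarithmic loss from a Dirichlet kernel argument, cf.\ [\ref{ref30}, Lemma~7.1]).

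Where your plan comes apart, as you yourself suspected, is the first step. You propose a $t$-fold iteration of the one-dimensional Gallagher--Sobolev inequality interleaved with partial summation. But after the first application one has a quantity of the form $\int_{\Gamma_1(h)}\bigl|\sum_{x\in I}e(\gamma_1x^{k_1}+h\alpha_2 x^{k_2}+\cdots)\bigr|\,d\gamma_1$; to iterate in $\gamma_2$ you would need to apply Lemma~\ref{lem4.2} to this as a function of $\gamma_2$, but the absolute value and the integration in $\gamma_1$ have already been performed, so the object is no longer a single exponential sum to which the lemma and partial summation directly apply. One would have to restructure the argument (for instance, raise to the $2L$-th power first and argue via a multivariable Sobolev bound together with Minkowski's inequality), and the accounting of powers of $X$ you give ($X^{D-t}$ times a sum over $\ll X^t$ truncations) is a symptom of this fuzziness rather than a clean identity. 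The paper sidesteps the whole issue by invoking [\ref{ref5}, Lemma~1] (Bombieri's multivariable analogue of Gallagher--Sobolev), which in one stroke bounds the pointwise $2L$-th power by $(\operatorname{vol}\Gamma(h))^{-1}$ times the integral of the $2L$-th power of the \emph{supremum over subintervals} $S^*$ over the full box $\Gamma(h)=\prod_j\Gamma_j(h)$; the supremum over subintervals is then removed afterwards via Carleson--Hunt (applied in $\gamma_t$), not via a $\sup_I$ absorption as in your outline. If you want to keep an iterated one-variable argument, the safe route is to emulate the proof of [\ref{ref5}, Lemma~1] (or simply cite it), rather than to interleave one-variable Gallagher--Sobolev with partial summation. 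With that substitution, the remainder of your proposal (the $N(H)$ counting, the arc containments, and the application of Theorem~1.4 with $s=L$ on major arcs and $2s=k_1(k_1+1)$ on minor arcs) agrees with the paper.
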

We shall first derive upper bounds for $(\ref{5.75.7})$ and $(\ref{5.85.8})$ in terms of the left hand side of $(\ref{7777})$ with $2L$ in place of $2s$, and $(\ref{8888})$ with $k_1(k_1+1)$ in place of $2s$. Then, by applying Theorem $\ref{thm1.2}$, we complete the proof of Proposition 5.1. We note here that the choice of $2L$ and $k_1(k_1+1)$ delivers the efficient application of Theorem 1.4.

\bigskip

\begin{proof}[Proof of Proposition 5.1]
For simplicity, throughout this proof, we write $\mathcal{D}_1=\mathcal{D}_1(\alpha_1)$ and $\mathcal{D}_2=\mathcal{D}_2(\alpha_1).$ 
Define $\Gamma(h)$ to be 
$$\Gamma(h)=\{(\gamma_1,\ldots,\gamma_t)\in [0,1)^t|\ \|h\alpha_j-\gamma_j\|\leq (4k)^{-1}X^{-k}\}.$$
Recall the definition ($\ref{1.7}$) of $D$. By applying [$\ref{ref5}$, Lemma 1] to $$\sum_{1\leq x\leq X}e(h(\alpha_1x^{k_1}+\cdots+\alpha_tx^{k_t})),$$ we infer that
\begin{equation*}
 \dsum_{h\in \mathcal{D}_1}\biggl|\dsum_{1\leq x\leq X}e(h(\alpha_1x^{k_1}+\alpha_2x^{k_2}+\cdots+\alpha_t x^{k_t}))\biggr|^{2L}
\end{equation*}
\begin{equation}\label{58'}
     \ll X^D \dsum_{h\in \mathcal{D}_1}\dint_{\Gamma(h)}\biggl|\sup_{I\subseteq [1,X]}\dsum_{x\in I}e(\gamma_1x^{k_1}+\gamma_2x^{k_2}+\cdots+\gamma_t x^{k_t}))\biggr|^{2L} d\mbd{\gamma},
\end{equation}
where $I$ runs over all intervals in $[1,X].$
 In the proof of Proposition 4.4, we have seen that for $h\alpha_1\in \mathfrak{M}$, the set $\{\gamma_1|\ \|h\alpha_1-\gamma_1\|<(4k)^{-1}X^{-k_1}\}$ is a subset of $\mathfrak{M}_1$.  Then, by making use of $N(H)$, we deduce that the bound ($\ref{58'}$) is
\begin{equation}\label{81}
    \ll N(H)X^D \dint_{\mathfrak{M}_1}\dint_0^1\cdots\dint_0^1\biggl|\sup_{I\subseteq [1,X]}\dsum_{x\in I}e(\gamma_1x^{k_1}+\gamma_2x^{k_2}+\cdots+\gamma_t x^{k_t}))\biggr|^{2L}d\mbd{\gamma}.
\end{equation}
 Therefore, by applying the Caleson-Hunt theorem with respect to the integral over $\gamma_t$ and Theorem 1.4 ($i$) with $\mathfrak{M}=\mathfrak{M}_1$, one concludes that the bound ($\ref{81}$) is $O( N(H_1)X^{2L+\epsilon}).$ This confirms ($\ref{5.75.7}$).

Similarly, in the proof of Proposition 4.4, we have seen that for $h\alpha_1\in \mathfrak{m}$, the set $$\{\gamma_1|\ \|h\alpha_1-\gamma_1\|<X^{-k_1}\}$$ is a subset of $ \mathfrak{m}_4$. Thus, we infer that
\begin{align*}
 &\dsum_{h\in \mathcal{D}_2}\biggl|\dsum_{1\leq x\leq X}e(h(\alpha_1x^{k_1}+\alpha_2x^{k_2}+\cdots+\alpha_t x^{k_t}))\biggr|^{k_1(k_1+1)}\\
 & \ll N(H)X^D \dint_{\mathfrak{m}_4}\dint_0^1\cdots\dint_0^1\biggl|\sup_{I\subseteq [1,X]}\dsum_{x\in I}e(\gamma_1x^{k_1}+\gamma_2x^{k_2}+\cdots+\gamma_t x^{k_t}))\biggr|^{k_1(k_1+1)}d\mbd{\gamma},
\end{align*}
where $I$ runs over all intervals in $[1,X].$
Thus, by applying the Carleson-Hunt theorem with respect to the integral over $\gamma_t$ and Theorem 1.4 ($ii$) with $\mathfrak{m}=\mathfrak{m}_4$, we find that the last expression is $O(N(H)X^{k_1(k_1+1)-\sigma+\epsilon}).$ This confirms ($\ref{5.85.8}$).
\end{proof}

\begin{rmk}
The Carleson-Hunt Theorem could be avoided at the cost of a factor $\log (6X)$ by standard use of a Dirichlet kernel argument (see, for example, [$\ref{ref30}$, Lemma 7.1])
\end{rmk}
\begin{rmk}

 Recall from section 5.2 that under the assumption ($\ref{ineq5.35.3}$), we may assume that $(\alpha_{1j},\ldots,\alpha_{tj})$ is in $\widetilde{\mathfrak{m}}_H$ for every $j\ (j=1,\ldots,s)$. We see that whenever $(\alpha_{1j},\ldots,\alpha_{tj})\in \widetilde{\mathfrak{m}}_H$, we have $N(H,\alpha_{1j},\ldots,\alpha_{tj})\leq 1.$ Indeed, if $N(H)>1$, there exists $h_1$, $h_2$ ($1\leq h_1,h_2\leq H$, $h_1\neq h_2$) and $\mbd{\gamma}=(\gamma_1,\ldots,\gamma_t)\in [0,1)^t$ such that 
\begin{equation*}
    \|h_1\alpha_{ij}-\gamma_i\|< X^{-k_i},\ \|h_2\alpha_{ij}-\gamma_i\|<X^{-k_i} \ \ \  (i=1,\ldots,t).
\end{equation*}
By triangle inequality,
\begin{equation}\label{58}
    \|(h_1-h_2)\alpha_{ij}\|\leq \|h_1\alpha_{ij}-\gamma_i\|+\|h_2\alpha_{ij}-\gamma_i\|< 2X^{-k_i}
\end{equation}
for all $i$ $ (1\leq i \leq t).$
Since $2X^{-k_i}<t^{-1}X^{-k_i+1}H^{-1}$ for sufficiently large $X$, it follows from ($\ref{58}$) that for every $i\ (1\leq i \leq t)$
\begin{equation}
     \|(h_1-h_2)\alpha_{ij}\|<t^{-1}X^{-k_i+1}H^{-1}.
\end{equation}
 Since $0<|h_1-h_2|< X,$ one has $(\alpha_{1j},\alpha_{2j},\ldots,\alpha_{tj})\in \widetilde{\mathfrak{M}}_H.$ This contradicts our assumption that $(\alpha_{1j},\alpha_{2j},\ldots,\alpha_{tj})\in \widetilde{\mathfrak{m}}_H.$ Hence, Proposition 5.1 with the assumption ($\ref{ineq5.35.3}$) delivers that
 for every $j\ (j=1,\ldots,s)$ one has
 \begin{equation}\label{60}
   \dsum_{h\in \mathcal{D}_1(\alpha_{1j})}\biggl|\dsum_{1\leq x\leq X}e(h(\alpha_{1j}x^{k_1}+\cdots+\alpha_{tj}x^{k_t}))\biggr|^{2L}\ll X^{2L+\epsilon},
 \end{equation}
and
\begin{equation}\label{61}
     \dsum_{h\in \mathcal{D}_2(\alpha_{1j})}\biggl|\dsum_{1\leq x\leq X}e(h(\alpha_{1j}x^{k_1}+\cdots+\alpha_{tj}x^{k_t}))\biggr|^{k_1(k_1+1)}\ll X^{k_1(k_1+1)-\sigma+\epsilon}.
\end{equation}  
\end{rmk}

\bigskip

\subsection{Proof of Theorem 1.2}
\begin{proof}[Proof of Theorem 1.2]
Suppose that ($\ref{ineq5.35.3}$) holds. From section 5.1, recall that the set 
$\{h\alpha_{1j}|\ h\in H_i\}\subseteq\mathfrak{M}$ or $\{h\alpha_{1j}|\ h\in H_i\}\subseteq\mathfrak{m}$, for all $1\leq j\leq s$ and $1\leq i\leq 2^s.$
By relabelling $\alpha_{1j}$, we may assume that for $1\leq i\leq m$, the set $\{h\alpha_{1i}|\ h\in H_1\}\subseteq \mathfrak{M}$, and for $m+1\leq i\leq s$, the set $\{h\alpha_{1i}|\ h\in H_1\}$ is a subset of $\mathfrak{m}.$ We put again $L=(k_1^2+k_1)/2+\lceil\sigma(1-k_1)\rceil$ and recall that $(\alpha_{1j},\ldots,\alpha_{tj})$ is in $\widetilde{\mathfrak{m}}_{H}$ for every $j=1,\ldots,s$. Note from Remark 2 above and section 5.2 that we have ($\ref{60}$), ($\ref{61}$) and ($\ref{62}$).

We first consider the case $m\geq 2L.$ By making use of our hypothesis $s>2L$ together with H\"older's inequality and ($\ref{62}$), we deduce that 
\begin{equation}\label{ineq5.15}
\begin{aligned}
    &\dsum_{ h\in H_1}\bigl|\dsum_{1\leq \boldsymbol{x}\leq X}e(h(\varphi_1(x_1)+\cdots+\varphi_s(x_{s})))\bigr|\\
    &\ll X^{s-2L-\delta_1}\dprod_{l=1}^{2L}\biggl(\dsum_{h \in H_1}\bigl|\dsum_{1\leq x_l\leq X}e(h\varphi_l(x_l))\bigr|^{2L}\biggr)^{1/2L}.
    \end{aligned}
\end{equation}
Meanwhile, on recalling the definition of $H_1$ and $\mathcal{D}_1$, we notice that $H_1\subseteq \mathcal{D}_1(\alpha_{1l})$ for $1\leq l\leq 2L$. Then, by applying ($\ref{60}$), it follows from $(\ref{ineq5.15})$ that 
\begin{equation*}
\begin{aligned}
    &\dsum_{ h\in H_1}\bigl|\dsum_{1\leq \boldsymbol{x}\leq X}e(h(\varphi_1(x_1)+\cdots+\varphi_s(x_{s})))\bigr|\\
    &\ll X^{s-2L-\delta_1}\dprod_{l=1}^{2L}\biggl(\dsum_{h \in \mathcal{D}_1(\alpha_{1l})}\bigl|\dsum_{1\leq x_l\leq X}e(h\varphi_l(x_l))\bigr|^{2L}\biggr)^{1/2L}\ll X^{s-\eta},
    \end{aligned}
\end{equation*}
for some $\eta=\eta(\delta_1)>0.$

 Next, consider the case $m<2L.$ We write 
\begin{equation*}
    \begin{aligned}
  &  A_l=\dsum_{h\in H_1}\bigl|\dsum_{1\leq x_l\leq X}e(h\varphi_l(x_l))\bigr|^{2L}\\
  &  B_l=\dsum_{h \in H_1}\bigl|\dsum_{1\leq x_l\leq X}e(h\varphi_l(x_l))\bigr|^{k_1(k_1+1)},
    \end{aligned}
\end{equation*}
and put $m_1=2L-m.$ Then, it follows from H\"older's inequality that
\begin{equation}\label{64}
    \begin{aligned}
    &\dsum_{h\in H_1}\bigl|\dsum_{1\leq \boldsymbol{x}\leq X}e(h(\varphi_1(x_1)+\cdots+\varphi_{s}(x_{s})))\bigr|\\
    &\ll \bigl(\dsum_{h\in H_1}1\bigr)^{1-\left(\frac{m}{2L}+\frac{m_1}{k_1(k_1+1)}\right)}\biggl(\dprod_{l=1}^{m}A_l^{1/2L}\biggr)\biggl(\dprod_{l=m+1}^{m+m_1}B_l^{1/(k_1(k_1+1))}\biggr)X^{s-(m+m_1)}.
    \end{aligned}
\end{equation} 

On recalling the definitions of  $H_1,$ $\mathcal{D}_1$ and $\mathcal{D}_2$, notice that $H_1\subseteq \mathcal{D}_1(\alpha_{1l})$ for $1\leq l\leq m$, and $H_1\subseteq \mathcal{D}_2(\alpha_{1l})$ for $m+1\leq l\leq m+m_1.$ Thus, we have for $1\leq l\leq m$ the bound
$$A_l\leq \dsum_{h\in \mathcal{D}_1(\alpha_{1l})}\bigl|\dsum_{1\leq x_l\leq X}e(h\varphi_l(x_l))\bigr|^{2L}, $$
and for $m+1\leq l\leq m+m_1$ the bound
$$ B_l\leq \dsum_{h \in \mathcal{D}_2(\alpha_{1l})}\bigl|\dsum_{1\leq x_l\leq X}e(h\varphi_l(x_l))\bigr|^{k_1(k_1+1)}.$$
Then, on substituting these inequalities into ($\ref{64}$), it follows by ($\ref{60}$), ($\ref{61}$) and $|H_1|\leq H\ll X^{1-\nu}$ that 
\begin{equation}\label{65}
\begin{aligned}
   & \dsum_{h\in H_1}\bigl|\dsum_{1\leq \boldsymbol{x}\leq X}e(h(\varphi_1(x_1)+\cdots+\varphi_{s}(x_{s})))\bigr| \\
   & \ll X^{1-(\frac{m}{2L}+\frac{m_1}{k_1(k_1+1)})}X^{m}X^{m_1-\frac{m_1\sigma}{k_1(k_1+1)}}X^{s-(m+m_1)-\eta}=X^{\phi-\eta},  
\end{aligned}
\end{equation}
where $\eta$ is suitably small positive number in terms of $\nu$, and 
$$\phi=1-\left(\frac{m}{2L}+\frac{m_1}{k_1(k_1+1)}\right)-\frac{m_1\sigma}{k_1(k_1+1)}+s.$$
Since $m_1=2L-m$ with $m,m_1\geq 0,$
$$\phi=1-\frac{m}{2L}-\frac{(2L-m)(1+\sigma)}{k_1(k_1+1)}+s.$$
On noting $2L\geq k_1^2+(1-2\sigma)k_1+2\sigma,$ simple calculations lead to the lower bound $2L(1+\sigma)\geq k_1(k_1+1).$ Hence, since $\phi$ is a linear function in $m$ with positive slope, we find that the function $\phi$ attains the maximum when $m=2L$, and thus $\phi\leq s.$

Thus, in all cases, we have
$$\dsum_{h\in H_1}|\dsum_{1\leq \boldsymbol{x}\leq X}e(h(\varphi_1(x_1)+\cdots+\varphi_s(x_{s})))|\ll X^{s-\eta}.$$
 Then, by the same treatment, it follows that for every $H_i\ (i=1,\ldots,2^s)$, we have ($\ref{54}$). This contradicts ($\ref{53}$) stemming from ($\ref{ineq5.35.3}$). Thus, we are forced to conclude that whenever $s>k_1^2+k_1+2\lceil\sigma(1-k_1)\rceil$, one has
\begin{equation*}
    \min_{\substack{0\leq \boldsymbol{x}\leq X\\ \boldsymbol{x}\neq \boldsymbol{0}}}\|\varphi_1(x_1)+\varphi_2(x_2)+\cdots +\varphi_{s}(x_{s})\|\leq H^{-1}.
\end{equation*} 
Hence, by letting $\nu\rightarrow 0$, we complete the proof of Theorem 1.2.
\end{proof}

\end{document}